\documentclass[12pt,english]{smfart}

\usepackage[T1]{fontenc}
\usepackage[english,french]{babel}

\usepackage{amssymb,url,xspace,smfthm}
\usepackage{csquotes}
\usepackage{stmaryrd}
\usepackage[all]{xy}
\usepackage{enumitem}
\usepackage{hyperref}

\usepackage{tikz}

\newtheorem{theorem}{Theorem}[section]
\newtheorem{proposition}[theorem]{Proposition}
\theoremstyle{remark}
  \newtheorem{remark}[theorem]{Remark}
  
\theoremstyle{definition}
  \newtheorem{definition}[theorem]{Definition}

\theoremstyle{definition}
    \newtheorem{example}[theorem]{Example}
\theoremstyle{definition}

\theoremstyle{definition}
    \newtheorem{lemma}[theorem]{Lemma}

\newcommand{\LL}{\mathbb{L}}

\newcommand{\R}{\mathbb{R}}
\newcommand{\PP}{\mathbb{P}}
\newcommand{\C}{\mathbb{C}}
\newcommand{\Q}{\mathbb{Q}}
\newcommand{\N}{\mathbb{N}}
\newcommand{\Z}{\mathbb{Z}}
\newcommand{\ac}{\overline{ac}}
\newcommand{\nash}{\hat{X}}
\newcommand{\field}{\mathbb{C}(\!(t)\!)}
\newcommand{\sheaf}{\mathcal{O}_{\hat{X}}(1)}
\newcommand{\tensor}{\mathcal{O}_Y(-\hat{K}_{Y/X}) \otimes \Omega_Y^2}

\title{Motivic local density of isolated surface singularities}
\author{Sidonie Ratajczak}
\address{Univ. Lille, CNRS, UMR 8524 - Laboratoire Paul Painlevé, F-59000 Lille, France}

\begin{document}

\begin{abstract}
The goal of this paper is to compute the motivic local density of an isolated algebraic surface singularity, in order to explain its link with algebraic multiplicity. In this context, we can use an additional data: the inner rate related to the bilipschitz geometry of the singularity, as studied by A. Belotto da Silva, L. Fantini and A. Pichon.
\end{abstract}

\maketitle

\tableofcontents

\section{Introduction}

Thanks to the pioneering work of Kontsevich, and then Denef and Loeser on motivic integration, non-archimedean geometry provides a framework to define and compare additive invariants associated to singularities of algebraic varieties. Denef and Loeser in \cite{denef_germs_1999} constructed a motivic zeta function, which interpolates the local $p$-adic Igusa functions, and that specializes on the cohomological invariants of the topological Milnor fiber. Another invariant is the local density, which exists in various contexts that we will recall. The goal of this paper is to compute the motivic local density of an isolated algebraic surface singularity.

We consider $K$ a field equipped with a distance and a measure $\mu$, the local density of a set $X \subset K^n$ of dimension $d$ at a point $x$ is defined as the following limit, if it exists: 
$$
\Theta_d(X,x) = \lim_{r \longrightarrow 0} \frac{\mu_d(X \cap B_n(x,r))}{\mu_d(B_d(x,r))}.
$$
If $X$ is a smooth real or complex variety, the local density exists and is equal to 1 in each point. This notion becomes interesting when $x$ is a singular point, to which it attributes an invariant. 

It had been introduced in the complex case by P. Lelong in \cite{lelong_integration_1957}, where he shows the existence of local density of complex analytic sets. P. Thie shows later \cite{thie_lelong_1967} that in the complex analytic case, the local density is a positive integer, by expressing the local density as a sum of local densities of components of the tangent cone counted with multiplicites, and R. Draper \cite{draper_intersection_1969} expresses it as the algebraic multiplicity of the local ring of $X$ in $x$. K. Kurdyka and G. Raby start the study of the subanalytic real case in \cite{kurdyka_densite_1989}, showing the existence of the local density and expressing it as well as the sum of densities of components of the tangent cone counted with multiplicites. The density is no longer an integer, but a positive real number if $x \in \overline{X}$.

\vspace{\baselineskip}

In the non-archimedean framework, R. Cluckers, G. Comte and F. Loeser then studied the $p$-adic case in \cite{cluckers_local_2012}. The Lebesgue measure is replaced by the Haar measure on $\mathbb{Q}_p$, or more generally a finite extension $K$ of $\Q_p$, and they consider definable sets $X \subset K^n$, either semi-algebraic or subanalytic. Unlike the real and complex case, the sequence of normalized local volumes $\theta_m := \frac{\mu_d(X\cap B_n(x,m))}{\mu_d(B_d(x,m))}$ where $B_d(x,m)$ is the ball of center $x$ and valuative radius $m$, does not always converge. But they show that it has a cyclic convergence, i.e. there exists an integer $e >0$ such that the subsequences $(\theta_{ke+i})_{k \geq 0}$ converge to $d_i$. They then define the local density of $X$ at $x$ as the mean value $\frac{1}{e}\sum_{i=1}^{e-1} d_i$.

\vspace{\baselineskip}

These results have been generalized to the case of Laurent series $\C(\!(t)\!)$, and more generally to discrete valued Henselian fields of characteristic zero by A. Forey \cite{forey_motivic_2017}. Because these fields are not locally compact, there is no Haar measure, but we can use motivic integration, here in its version due to R. Cluckers and F. Loeser \cite{cluckers_constructible_2008} to give a sense to the volume. It is not a classical measure theory as Lebesgue, the volume is no longer a real number but an element of the Grothendieck ring of varieties on the residue field, denoted $\mathcal{M}_\C$ (see Section \ref{section : mot int}). 

\vspace{\baselineskip}

Denote $\mu_d$ the $d$-dimensional measure defined in \cite{cluckers_constructible_2008}, and let $B_n(x,m) = \{y \in \field^n \mid v(x-y) \geq m\}$ the $n$-dimensional ball around $x \in \field^n$ of valuative radius $m \in \Z$. Consider $X$ a variety over $\C$ of dimension $d$ with an isolated singularity $O$, and $\underline{X} \subset h[n,0,0]$ the definable set associated to the $\C[\![t]\!]$-points of $X$. We have: 
$$
\theta_m := \frac{\mu_d(\underline{X}\cap B_n(x,m))}{\mu_d(B_d(x,m))} \in \mathcal{M}_\C
$$
and there exists an integer $e > 0$ such that for any $i = 0, 1, ..., e - 1$, the
subsequence $(\theta_{ke+i})_{k \geq 0}$ converge to some $d_i \in \mathcal{M}_\C$, where the topology is induced by the degree in $\LL$. As in the $p$-adic case, we define the motivic local density as the mean value of the $d_i$. Since the motivic local density is independent of the choice of the embedding (see Proposition \ref{prop : indep embed}), we have that:
$$
\Theta_d^\text{mot}(X,O):= \frac{1}{e} \sum_{i=0}^{e-1} d_i \in \Q \otimes \mathcal{M}_\C.
$$
This periodic convergence can occur even if $X$ is an algebraic set, for example the set $X$ defined by the equation $x^2=y^3$ has a motivic local density at the origin equal to 1/2.

\vspace{\baselineskip}

In \cite{forey:tel-01871909}, A. Forey proved the following proposition: \\

\begin{proposition}
    Let $f \in \C[x,y]$ be a power series without square factor. Let $f=f_1\dots f_r$ the decomposition in $\mathbb{C}[\![x,y]\!]$ of $f$ into irreducible factors. Denote by $N_i$ the multiplicity of $f_i$ and $C$ the curve defined by $f$. Then the motivic density of $C$ at the origin is given by:
    $$
    \Theta^\text{mot}_1(C,0)=\sum_{i=1}^r \frac{1}{N_i}.
    $$
\end{proposition}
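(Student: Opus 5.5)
We reduce to a single branch and then compute with a Puiseux parametrization. The motivic density is additive over the branches. The branches $C_i=\{f_i=0\}$ meet only at the origin, and for $m$ large the arcs in $\underline{C}\cap B_2(0,m)$ that lie on two branches form a set of dimension $0$. Such a set has measure $0$ for $\mu_1$. So $\mu_1(\underline{C}\cap B_2(0,m))=\sum_i \mu_1(\underline{C_i}\cap B_2(0,m))$, and each subsequence limit splits as a sum over $i$. It therefore suffices to show $\Theta^{\text{mot}}_1(C_i,0)=1/N_i$ for an irreducible germ $C_i$ of multiplicity $N:=N_i$.

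\textbf{Step 1: Puiseux coordinates.} After a linear change of coordinates the tangent line of $C_i$ is $\{y=0\}$. The density is independent of the embedding (Proposition \ref{prop : indep embed}), and in any case a linear change preserves the balls. The branch then has a Puiseux parametrization $\gamma(s)=(s^N,\varphi(s))$ with $\operatorname{ord}\varphi>N$. The map $\gamma$ is generically injective: it is a bijection from $\C[\![t]\!]$-points $s$ onto the points of $C_i$ with $v(x)\ge 0$, except at the origin. The $\field$-points of $C_i$ near $0$ are exactly the images $\gamma(s)$, with $x=s^N$. Since $\operatorname{ord}\varphi>N$, we get $v(\gamma(s))=\min(v(x),v(y))=v(x)=N\,v(s)$. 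Hence
$$
\underline{C_i}\cap B_2(0,m)=\{(x,y)\in \underline{C_i} : v(x)\ge m\}.
$$

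\textbf{Step 2: compute the volume.} The $1$-dimensional measure $\mu_1$ on a curve is computed by projecting onto a coordinate on which the projection is finite. I would project to the $x$-axis, where $v(dy/dx)>0$, so the projection is locally measure-preserving. The key count is the fibre of the projection over $x$: it has $N$ points if $v(x)\in N\Z$ and $x$ has an $N$-th root in $\field$, and $0$ points otherwise. Because the field is $\field$ over the algebraically closed field $\C$, the condition for an $N$-th root is only $N\mid v(x)$, since the angular component always has roots. So
$$
\mu_1(\underline{C_i}\cap B_2(0,m))=N\cdot \mu_1\{x : v(x)\ge m,\ N\mid v(x)\}=N\sum_{k\ge m,\,N\mid k}(\LL-1)\LL^{-k-1}.
$$
Alternatively, and avoiding the multiplicity-$N$ fibre count, one can change variables along $\gamma$. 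That gives the integral of $\LL^{-v(\gamma'(s))}$ over $\{v(s)\ge \lceil m/N\rceil\}$, and $v(\gamma'(s))=(N-1)v(s)$ because $\operatorname{ord}\varphi'>N-1$. Summing the geometric series gives $N\,\LL^{-N\lceil m/N\rceil}/(1+\LL^{-1}+\dots+\LL^{-(N-1)})$ up to an invertible factor. We then divide by $\mu_1(B_1(0,m))=\LL^{-m}$, and write $m=kN+i$ with $0\le i<N$. Then $\theta_{kN+i}$ converges, in fact is constant, to
$$
d_i=\frac{N\,\LL^{-(N-i)\bmod N}\,(1-\LL^{-1})}{1-\LL^{-N}}.
$$
So the period is $e=N$.

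\textbf{Step 3: average.} Summing over $i=0,\dots,N-1$, the exponents $(N-i)\bmod N$ run over $0,\dots,N-1$. This gives $\sum_i d_i=N(1-\LL^{-1})(1+\LL^{-1}+\dots+\LL^{-(N-1)})/(1-\LL^{-N})=N$, so the mean is $\frac{1}{N}\cdot N\cdot\frac{1}{N}$… Here we need care with normalizations. With the change-of-variables computation, the factor $N$ from the fibre count is absent, since $\gamma$ is injective. Then $\sum_i d_i=1$ and the mean is $1/N$, as claimed.

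\textbf{Main obstacle.} The hard part is Step 2: pinning down the correct measure on $\underline{C_i}$ in the Cluckers–Loeser framework, meaning the Jacobian or order-of-differential-form normalization for the $1$-dimensional measure on a curve in the plane. This is exactly what fixes whether the extra factor $N$ appears. I would settle it by using the definition of $\mu_d$ through a definable bijection to a subset of $h[1,0,0]$ with the Jacobian factor, which is here $\gamma$. I would then check the answer on the smooth case $N=1$, where the density must equal $1$.
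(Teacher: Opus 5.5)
Your committed route is essentially the paper's argument: change of variables along the injective Puiseux parametrization $\gamma$, with Jacobian order $(N-1)v(s)$ relative to the canonical form $|dx|$, followed by a mean value. The paper does the same for all branches at once, via the normalization $\pi:Y\to C$, a generic projection $l$, and $\mathrm{ord}_{p_i}(\pi^*l^*dx)=N_i-1$. It normalizes by spheres rather than balls, so its ratio is just $1$ if $N_i\mid n$ and $0$ otherwise, and the mean $1/N_i$ is immediate. Your ball normalization also works, once the constants are right.

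The problem is Step 2, and it is not a normalization ambiguity. The fibre-count computation is false for the motivic measure, so it is not an alternative to the change of variables. Over $x$ with $N\mid v(x)$, the $N$ points of $\underline{C_i}$ are indexed by the $N$-th roots $\xi$ of $\ac(x)$. Motivically this family contributes the class of $\{(a,\xi)\in\mathbb{G}_m^2:\xi^N=a\}\cong\mathbb{G}_m$, i.e.\ $\LL-1$, not $N(\LL-1)$. This is the motivic analogue of the $p$-adic fact that, in $\mathbb{F}_p^\times$, the number of $N$-th roots of an $N$-th power times the proportion of $N$-th powers equals $1$.

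Done correctly, the projection gives the factor $1$, in agreement with change of variables. The map $s\mapsto s^N$ sends $\{v(s)=k\}$, of volume $\LL^{-k}(1-\LL^{-1})$, with Jacobian $\LL^{-(N-1)k}$, onto $\{v(x)=Nk\}$, of volume $\LL^{-Nk}(1-\LL^{-1})$. The two computations measure the same set, so they must agree. Keeping the factor $N$ would give $\Theta^{\mathrm{mot}}_1(C,0)=r$ instead of $\sum_i 1/N_i$.

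Your geometric-series value in the change-of-variables computation also still carries the spurious $N$. The correct value is $\LL^{-N\lceil m/N\rceil}/(1+\LL^{-1}+\dots+\LL^{-(N-1)})$. This gives $d_i=\LL^{-((N-i)\bmod N)}(1-\LL^{-1})/(1-\LL^{-N})$, with $\sum_i d_i=1$ and mean $1/N$.

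Finally, your proposed check on the smooth case $N=1$ cannot detect the issue, since the disputed factor equals $1$ there. What settles it is the class computation above, not a choice of normalization.
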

We can compare with the complex local density at the origin of the complex curve defined by $f=0$, also known as Lelong's number. It is equal to the multiplicity of $f$, $\sum_{i=1}^r N_i$ by Draper's result \cite{draper_intersection_1969}. 

\vspace{\baselineskip}

In this paper, we prove an explicit formula for the motivic local density of isolated surfaces singularities. We use an additional data, the inner rate related to the bilipschitz geometry of the singularity. Those inner rates were first introduced in \cite{birbrair_thick-thin_2014} by L. Birbrair, W. D. Neumann and A. Pichon, and were studied in \cite{silva_inner_2022} by A. Belotto da Silva, L. Fantini and A. Pichon. They study the metric structure of the germ of an isolated complex surface singularity $(X, 0)$ by means of an infinite family of numerical analytic invariants, called inner rates. A resolution of $(X,0)$ is good if its exceptional locus is a normal crossing divisor. Given a good resolution $\pi:  Y \longrightarrow X$ of $(X,0)$ that factors through the blowup of the maximal ideal and through the Nash modification of $(X,0)$, an irreducible component $E_i$ of the exceptional divisor $\pi^{-1}(0)$ of $\pi$, and a small neighborhood $\mathcal{N}(E_i)$ of $E_i$ in $Y$ with a neighborhood of each double point of $\pi^{-1}(0)$ removed, the inner rate $q_i$ of $E_i$ is a rational number that measures how fast the subset $\pi(\mathcal{N}(E_i))$ of $(X,0)$ shrinks when approaching the origin (see Section \ref{section : taux internes}). The inner rate $q_i$ is independent of the choice of an embedding of $(X,0)$ into a smooth germ, only depending on the analytic type of $(X,0)$ and on the divisor $E_i$. The finite set of inner rates which are bilipschitz invariants for the inner metric has been described explicitly in \cite{birbrair_thick-thin_2014}. It is worth noting that an inner rate equal to 1 characterizes a metrically conical region of the surface, it corresponds to linear inner contact between two complex curve germs on $(X,0)$.

\vspace{\baselineskip}

In our context, we will also be considering $l:(X,O) \longrightarrow (\C^2,0)$ a generic projection, and a good resolution of the singularity such that $\Pi_l^* \cap E_j = \emptyset$ for components $E_j$ with associated inner rate equal to 1, where $\Pi_l^*$ denotes the strict transform of the polar curve $\Pi_l$.
The main result of this paper is the following theorem:

\vspace{\baselineskip}

\begin{theorem}\label{thm : formula surface}
    Let $X$ be an algebraic surface (i.e. a 2-dimensional separated scheme of finite type) over $\C$ with a unique isolated singularity $O$. Let $\pi : Y \longrightarrow X$ be a good resolution of the singularity that satisfies the previous conditions. Denote $E_i$ the irreducible components of the exceptional locus, $m_i$ its multiplicity and $q_i$ its associated inner rate. Set $E_i^0=E_i \smallsetminus \cup_{j \neq i} E_j$.
    The motivic local density of $X$ at the point $O$ is given by:
    $$
    \Theta^\text{mot}_2(X,O) = \sum_{i\mid q_i=1} \frac{1}{m_i} \Bigg(\frac{1}{\LL+1}[E_i^0] + \sum_{j \mid E_i\cap E_j \neq \emptyset }\frac{\LL^{-(q_{j}-1)m_{j}}(1-\LL^{-1})}{(1-\LL^{-(q_{j}-1)m_{j}})(1+\LL^{-1})} \Bigg),
    $$
    which lies in $\mathcal{M}_\C \otimes \Q$.
\end{theorem}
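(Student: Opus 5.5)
Denote the normalized volumes by $\theta_m = \mu_2(\underline{X}\cap B_n(O,m))/\mu_2(B_2(O,m))$, with $\mu_2(B_2(O,m)) = \LL^{-2m}$ up to normalization. The plan is to compute $\theta_m$ for large $m$ on the resolution $Y$ and then read off the limit. First I express $\mu_2(\underline{X}\cap B_n(O,m))$ as a motivic integral over arcs of $Y$, via the change of variables formula for $\pi$. The $2$-dimensional measure on $\underline{X}$ is the one induced by the volume form pulled back through the Nash modification. Since $\pi$ factors through the Nash modification, the Jacobian term is $\LL^{-\operatorname{ord}_t \hat K_{Y/X}}$, where $\hat K_{Y/X}$ is the Nash-type relative canonical divisor (the one attached to $\mathcal{O}_Y(-\hat{K}_{Y/X})\otimes\Omega_Y^2$). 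An arc $\gamma$ on $Y$ whose origin lies in $E_i^0$, with $\operatorname{ord}_{E_i}\gamma=k$, maps into $B_n(O,m)$ if and only if $km_i\ge m$. This uses that $\pi$ factors through the blowup of the maximal ideal, so $\pi^*\mathfrak m_O=\mathcal O_Y(-\sum m_iE_i)$. Arcs through a double point $E_i\cap E_j$ have orders $(k,l)$ and satisfy the condition iff $km_i+lm_j\ge m$.

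The second step is to identify the Jacobian order in terms of inner rates. Using a generic projection $l$, the volume form on $X$ is comparable to $l^*(dx\wedge dy)$ away from the polar curve. On $E_i$ one has $\operatorname{ord}_{E_i}\hat K_{Y/X} = k_i - \operatorname{ord}_{E_i}(\text{polar}) $, and I expect this to equal $(q_i+1)m_i - 1$, up to the standard discrepancy shift. This matches the result of Belotto--Fantini--Pichon relating $q_i$ to $m_i$, the orders of $l^*$ of coordinates, and the polar multiplicity. The hypothesis $\Pi_l^*\cap E_j=\emptyset$ for $q_j=1$ ensures there is no polar correction on the conical components and at their double points. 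Plugging this in, the contribution of $E_i^0$ at order $k$ is $[E_i^0](\LL-1)\LL^{-2}\cdot\LL^{-k((q_i+1)m_i)}\LL^{\dots}$. After dividing by $\LL^{-2m}$, only the components with $q_i=1$ give terms of degree $0$ as $m\to\infty$. For $q_i>1$ the factor $\LL^{-(q_i-1)km_i}$ with $k\ge m/m_i$ goes to $0$.

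The third step is the summation. For $q_i=1$, summing the geometric series over $k\ge \lceil m/m_i\rceil$ gives a term $\LL^{-2(\lceil m/m_i\rceil m_i - m)}$ times $(1-\LL^{-1})/(1-\LL^{-2})=1/(\LL+1)$ up to the factor from $[E_i^0]$. This term is periodic in $m$ with period $m_i$, and averaging over the residues yields the factor $\frac1{m_i}$. I would check this averaging at the level of the formula; the mean over the $m_i$ residue classes should produce exactly $\frac{1}{m_i}\frac{[E_i^0]}{\LL+1}$.

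At a double point $E_i\cap E_j$ with $q_i=1$, I sum over $(k,l)$, $k,l\ge1$, with weight $(\LL-1)^2\LL^{-2}\LL^{-2(km_i+lm_j)}\LL^{-(q_j-1)lm_j}$. Intersections where both $q$'s exceed $1$ vanish in the limit. Fixing $l$ and summing over $k$ with the constraint $km_i\ge m - lm_j$ gives, after averaging, a factor $\frac{1}{m_i}\cdot\frac{1-\LL^{-1}}{1+\LL^{-1}}$. The sum over $l\ge1$ then gives $\LL^{-(q_j-1)m_j}/(1-\LL^{-(q_j-1)m_j})$. When $q_j=1$ as well, this term must be split symmetrically between $i$ and $j$, and I need to verify that it is consistent with the formula, where the pair is counted from both sides.

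The main obstacle is the second step: rigorously identifying the order of the Nash Jacobian along each $E_i$ and at double points with $(q_i+1)m_i$ plus the correct constant. This includes handling the polar curve and showing that arcs through polar points on non-conical components contribute nothing in the limit. I would attack it through the local monomial form of $l\circ\pi$ near generic points and double points, using the characterization of inner rates from Section \ref{section : taux internes}. A secondary difficulty is the bookkeeping of the cyclic averaging, which amounts to showing that the Cesàro mean of the periodic limits gives exactly $1/m_i$.
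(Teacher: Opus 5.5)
Your proposal is correct. It has the same architecture as the paper's proof: change of variables along $\pi$, Jacobian order $\hat k_i=(q_i+1)m_i-1$ along $E_i$, chart computations on $E_i^0$ and at double points, components with $q>1$ dropping out, and cyclic averaging. The bookkeeping differs in two useful ways.

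First, the paper normalizes by spheres, using its sphere/ball proposition and Lemma \ref{lem:vois tub}. At a double point it therefore has to parametrize the solutions of $m_1k+m_2l=n$ by a B\'ezout pair, track the resulting ceiling/floor bounds, and treat $\gcd(m_1,m_2)>1$ separately. With balls, you sum a geometric tail in $k$ for each fixed $l$, which leaves $\LL^{-2r_l(m)}/(1-\LL^{-2m_i})$ with $r_l(m)=(lm_j-m)\bmod m_i$. As $m$ runs through the classes mod $m_i$, $r_l(m)$ takes each value $0,\dots,m_i-1$ once, for every $l$ and regardless of $\gcd(m_i,m_j)$. So the average is uniformly $\frac{1}{m_i}\frac{1-\LL^{-1}}{1+\LL^{-1}}$ and the gcd case disappears. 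The range $lm_j\ge m$, where the true lower bound is $k\ge1$, only contributes terms of degree $\le-(q_j-1)m$.

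Second, the paper integrates $\LL^{-\text{ord}(\pi^*\omega)}$ with $\omega=l^*dx\wedge dy$, whose divisor contains $\Pi_l^*$. That is why it needs $\Pi_l^*\cap E_j=\emptyset$ when $q_j=1$. Your first step uses the weight $\LL^{-\text{ord}\,\hat K_{Y/X}}$ instead. This is what Theorem \ref{thm: change var} gives for the canonical volume form, since the image of $\pi^*\Omega^2_X\to\Omega^2_Y$ is $\mathcal{O}_Y(-\hat K_{Y/X})\otimes\Omega^2_Y$. That weight is monomial in the local equations of $D$ on every chart, so the polar curve never enters. Your worry about arcs through polar points is therefore moot in your own set-up.

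A few corrections:
\begin{itemize}
\item Discard the expression $k_i-\text{ord}_{E_i}(\text{polar})$. There is no polar correction along $E_i$; $\hat k_i$ is simply the order along $E_i$ of the Jacobian of $l\circ\pi$ (Lemma \ref{lem: ord=mather}).
\item Your planned local computation does give the value. Near a generic point of $E_i$ write $l_1\circ\pi=u^{m_i}$ and $l_2\circ\pi=\sum_j a_j(v)u^j$. By the curvette description of $q_i$ and Lemma \ref{lem : inner rate}, the first $v$-dependent coefficient occurs at $j=q_im_i$. Hence the Jacobian is $m_iu^{m_i-1}\cdot u^{q_im_i}\cdot(\text{unit})$ and $\hat k_i=m_i(q_i+1)-1$. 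This is Proposition \ref{disc mather}, which the paper obtains from Cherik's computation.
\item Two adjacent components with $q=1$ never occur (remark after the theorem). Otherwise the denominator $1-\LL^{-(q_j-1)m_j}$ would vanish, so no symmetric splitting is needed.
\item In your third step the tail sum is $[E_i^0]\,\LL^{-1}(1-\LL^{-1})\LL^{-2r}/(1-\LL^{-2m_i})$ with $r=\lceil m/m_i\rceil m_i-m$. The factor $1/(1-\LL^{-2})$ only appears after averaging $\LL^{-2r}$ over $0\le r<m_i$. The resulting value $\frac{1}{m_i}\frac{[E_i^0]}{\LL+1}$ is correct.
\end{itemize}
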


\vspace{\baselineskip}

The proof is based on a resolution of the singularity. The morphism of resolution allows us to compute the motivic local density using a theorem of change of variable (see Section \ref{section : mot int}, Theorem \ref{thm: change var}). In order to compute the jacobian factor appearing in this change of variable, we prove the following proposition (See Proposition \ref{disc mather}) which links the inner rates and the Mather discrepancies. We use the work of Y. Cherik (See \cite{cherik_inner_2022}) on the inner rates of finite morphisms to prove this proposition. 

\vspace{\baselineskip}

\begin{proposition} 
Let $\pi : Y \longrightarrow X$ be a good resolution of the isolated singularity of $X$, that factors through the Nash transform. Then $q_i$ the inner rate associated to an irreducible component $E_i$ of the exceptional divisor $\pi^{-1}(0)$ is equal to the Mather log-discrepancy normalized by the multiplicity, minus one. In other words, $q_i = \frac{{\hat{k}_i}^\text{log}}{m_i}-1$.
\end{proposition}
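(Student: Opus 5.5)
The plan is to compute both sides along a generic arc through $E_i$ and compare them using Cherik's description of inner rates of finite morphisms. Fix a generic linear projection $\ell=(\ell_1,\ell_2)\colon (X,0)\to(\C^2,0)$ whose coordinates are generic linear forms. Recall the Mather discrepancy: since $\pi$ factors through the Nash transform $\nu\colon\hat X\to X$, the image of $\pi^*\Omega^2_X\to\Omega^2_Y$ is invertible and equals $\mathcal{O}_Y(-\hat K_{Y/X})\otimes\Omega^2_Y$. Write $\hat k_i=\mathrm{ord}_{E_i}\hat K_{Y/X}$ and $\hat k_i^{\log}=\hat k_i+1$. The form $\ell^*(dx\wedge dy)=d\ell_1\wedge d\ell_2$ is a generic element of the image of $\pi^*\Omega^2_X$, so its order of vanishing along $E_i$ is exactly $\hat k_i$. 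On the other hand, it factors through the Jacobian of $\ell$. This gives
$$\hat k_i=\mathrm{ord}_{E_i}(\ell\circ\pi)^*(dx\wedge dy)=\mathrm{ord}_{E_i}\pi^*J_\ell+k_i',$$
where $k_i'$ is the order along $E_i$ of the pullback of $dx\wedge dy$ under the composite $Y\to\C^2$, viewed through a log resolution of the discriminant.

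The next step is to compute this order in local coordinates $(u,v)$ at a general point of $E_i=\{u=0\}$. There $\ell\circ\pi$ is a finite map to $\C^2$. Since $\ell$ is generic, $\mathrm{ord}_{E_i}(\ell_1\circ\pi)=\mathrm{ord}_{E_i}(\ell_2\circ\pi)=m_i$. After a generic linear change, one can write $\ell_1\circ\pi=u^{m_i}\cdot(\text{unit})$ and $\ell_2\circ\pi=\ell_1\circ\pi\cdot(a(v)+\dots)$. The inner rate $q_i$ is read off the rate at which the second coordinate separates the images of nearby arcs: following Belotto--Fantini--Pichon and Cherik, $q_i$ is the contact exponent with respect to $\ell_1$, that is, the quantity with $\mathrm{ord}_{E_i}\,\partial_v(\ell_2\circ\pi)=m_i(q_i+1)-\dots$ normalized appropriately. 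I would use Cherik's result that, for the finite morphism $\ell$, the inner rate of $E_i$ on $X$ equals the inner rate in $\C^2$ of the image divisor, which is a divisorial valuation of $\C^2$ obtained by blowing up through the discriminant. For such a divisor in $\C^2$, a direct toric-type computation along a Puiseux parametrization $x=u^{m},\ y=\sum c_k u^{k}+ c\,u^{mq}v$ gives
$$\mathrm{ord}(dx\wedge dy)=(m-1)+mq=m(q+1)-1.$$

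Combining the two computations, $\ell\circ\pi$ restricted near a general point of $E_i$ is, up to units, $(u,v)\mapsto(u^{m_i},\ \text{polynomial in }u+u^{m_iq_i}v)$. This holds because Cherik's theorem guarantees that the rate of $E_i$ equals the rate of its image under $\ell$, and $\ell$ is a local isomorphism generically off the polar curve. Hence $\mathrm{ord}_{E_i}d\ell_1\wedge d\ell_2=m_i-1+m_iq_i$. This identifies $\hat k_i=m_i(q_i+1)-1$, that is, $q_i=\frac{\hat k_i+1}{m_i}-1=\frac{\hat k_i^{\log}}{m_i}-1$.

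The main obstacle is the middle step. Even for $E_i$ meeting the polar curve, one must show that the local form of $\ell\circ\pi$ along $E_i$ really has the second coordinate varying at order exactly $m_iq_i$ in $u$, with nonzero $v$-derivative, rather than higher. I would try to derive this from the definition of $q_i$ as the exponent of the inner distance between generic curvettes of $E_i$, using that the inner distance of two curvettes equals the outer distance of their $\ell$-images in $\C^2$ when those images are not separated by polar branches. That is exactly Cherik's statement. If the generic point of $E_i$ meets the polar, ramification could inflate the order, so a case check there may be needed.
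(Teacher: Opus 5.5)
Your first step matches the paper's Lemma relating $\mathrm{ord}_{E_i}(\pi^*\omega)$ to $\hat k_i$. Your argument that $\omega=\ell^*(dx\wedge dy)$ is a generic element of the image of $\pi^*\Omega^2_X\to\Omega^2_Y$ is a correct, and arguably cleaner, version of it. The target of your second step, $\mathrm{ord}_{E_i}(\pi^*\omega)=m_i(q_i+1)-1$, is exactly what the paper imports from Cherik's Proposition 3.2, namely $K_{\pi^*\omega}=\sum_i(m_i(q_i+1)-1)E_i+\Pi_l^*$.

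\textbf{The gap.} You do not cite that formula, and the derivation you sketch does not establish it. The normal form $\ell\circ\pi\sim(u^{m_i},P(u)+u^{m_iq_i}v+\cdots)$ is asserted, and the reasons you give do not produce it.
\begin{itemize}
\item That $\ell$ is a local isomorphism off the polar curve says nothing about $\ell\circ\pi$ along $E_i$, which it contracts.
\item The route through the image divisor $F$ of $E_i$ on a model of $\C^2$ only yields $k_F=m_F(q_F+1)-1$ for $F$. To transfer this to $E_i$ you need the ramification index $e$ of $E_i$ over $F$: one has $m_i=e\,m_F$, $q_i=q_F$, and $\mathrm{ord}_{E_i}(\pi^*\omega)=(e-1)+e\,k_F=e\,m_F(q_F+1)-1$. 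Without the $(e-1)$ term and the relation $m_i=e\,m_F$, your two computations do not combine.
\end{itemize}
Two smaller points. The decomposition $\hat k_i=\mathrm{ord}_{E_i}\pi^*J_\ell+k_i'$ is circular, since the pullback of $dx\wedge dy$ under $\ell\circ\pi$ is $\pi^*\omega$ itself; drop it. The worry about the polar curve is unfounded: $\Pi_l^*$ meets $E_i$ in finitely many points, so it never affects the order along $E_i$, which is computed at a general point.

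\textbf{Closing the gap.} You can do this directly from the Belotto da Silva--Fantini--Pichon lemma recalled in Section 3 (inner contact of curvettes equals the contact of their $\ell$-images in $\C^2$), without Cherik.
\begin{itemize}
\item Set up coordinates. At a general $p\in E_i$ take $(u,v)$ with $E_i=\{u=0\}$ and $\ell_1\circ\pi=u^{m_i}$. This is possible because $\mathfrak M\mathcal O_Y$ is invertible and $p\notin\ell_1^*$. Write $\ell_2\circ\pi=f(u,v)=P(u)+a(v)u^k+O(u^{k+1})$, where $k$ is the least exponent whose coefficient depends on $v$, and $a'(0)\neq0$.
\item Compute the order. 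Then $\pi^*\omega=m_iu^{m_i-1}\partial_vf\,du\wedge dv$ has order $m_i-1+k$ along $E_i$.
\item Compute the contact. The curvettes $\{v=c\}$ and $\{v=c'\}$ map to the plane branches $x=u^{m_i}$, $y=f(u,c)$ and $y=f(u,c')$. Their contact is $\max_{\zeta^{m_i}=1}\mathrm{ord}_u\bigl(f(u,c)-f(\zeta u,c')\bigr)/m_i$. For $\zeta=1$ this equals $k/m_i$. For $\zeta\neq1$ and generic $(c,c')$ the order is at most $k$: either $P(u)-P(\zeta u)$ already has order $<k$, or the $u^k$-coefficient $a(c)-\zeta^k a(c')$ is nonzero.
\item Conclude. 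The lemma identifies this contact with $q_i$, so $k=m_iq_i$. Hence $\mathrm{ord}_{E_i}(\pi^*\omega)=m_i(q_i+1)-1$, and together with your first step $\hat k_i^{\log}=m_i(q_i+1)$, which is the statement.
\end{itemize}
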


\vspace{\baselineskip}

The paper is organized as follow. In the next section, we will recall some basic definitions on motivic integration. Then, we are going to introduce in Section \ref{section : taux internes} the inner rates of a complex surface, based on the work of A. Belotto da Silva, L. Fantini and A. Pichon in \cite{silva_inner_2022}. An important result of this paper is the proposition making the link between those inner rates and the Mather discrepancies, that we will discuss in Section \ref{section : mather}. We finally introduce the motivic local density (see Section \ref{section : properties}) and prove some of its properties. In fact, the motivic local density is a 1-bilipschitz invariant with respect to the non-Archimedean norm. The last section of this paper is dedicated to the proof of Theorem \ref{thm : formula surface}, and we also give a proof of Proposition 1.1, using our method via resolution of singularities, which is different from the method used by A. Forey.

\vspace{\baselineskip}

\textbf{Acknowledgements:} I acknowledge the support of the CDP C2EMPI, together with the French State under the France-2030 programme, the University of Lille, the Initiative of Excellence of the University of Lille, the European Metropolis of Lille for their funding and support of the R-CDP-24-004-C2EMPI project. I gratefully acknowledge R. Cluckers and A. Forey for their guidance, L. Fantini for enlightening discussions and M. Raibaut for his constructive comments on this paper. 

\section{Recap on motivic integration} \label{section : mot int}

Historically, Batyrev showed the equality of Betti Numbers of two complex proper smooth birational Calabi-Yau varities over $\C$ using p-adic integration and Weil conjectures for zeta functions. Kontsevich constructed the motivic integral in order to generalize this result and showed the equality of Hodge Numbers. Several motivic integration theories had been developped in the last thirty years. The sets we want to measure are the semi-algebraic sets in valued fields, they are seen as subsets of the arc space of variety over the residue field in Kontsevich/Denef-Loeser theory, and as definable sets in a first order theory in the Cluckers-Loeser or Hrushovski-Kazhdan version.

\vspace{\baselineskip}

We are considering the theory of Cluckers-Loeser (See \cite{cluckers_constructible_2008}), specialized to the case of algebraically closed residue field. Let us recall the setting. Fix $k$ a field of characteristic zero. Consider the three sorted language of Denef-Pas, with one sort for the valued field, with the language of rings $\{0,1, +,-, \cdot\}$, one sort for the residue field with the language of rings and one sort for the value group with the Presburger language $\{0,+,\leq,\{\equiv_n\}_{n \geq 1} \}$ where $\equiv_n$ is a binary relation for congruence modulo $n$. We also add symbol $v$ for the valuation map, $\ac$ for the angular component, and constants symbols for elements $k(\!(t)\!)$. We will use this language to describe the Henselian discretely valued field of characteristic zero. We will consider structures for $\mathcal{L}_{\text{DP}}$ consisting of tuples $(K,k,\Gamma)$ where $K$ is a
valued field with value group $\Gamma$, and residue field $k$. For example, for all field $k$, $k(\!(t)\!)$  the field of formal Laurent series over $k$ is an $\mathcal{L}-$structure $(k(\!(t)\!),k,\Z)$, where the symbols are naturally interpreted: 
$$
v\left(\sum_{i \geq i_0}^\infty  a_it^i \right) = i_0, ~ \ac \left( \sum_{i \geq i_0}^\infty  a_it^i \right) = a_{i_0} \text{ if } a_{i_0} \neq 0.
$$

The $\mathcal{L}_\text{DP}$-theory of the above discribed structures whose valued field is Henselian and whose residue field is of characteristic zero admits elimination of quantifiers in the valued field sort. 

\vspace{\baselineskip}

\begin{definition}(See Section 2.3 of \cite{cluckers_constructible_2008})
    We define $h[m,n,r]$ the functor from the category of fields containing $k$ to the category of sets sending an algebraically closed field $F$ to 
    $$
    h[m,n,r](F)=F(\!(t)\!)^m \times F^n \times \Z^r.
    $$
    For $X$ an affine variety over $F$ in $\mathbb{A}^n_F$, denote $\underline{X}$ the subdefinable set of $h[n,0,0]$, which to any algebraically closed field $F$ associates $X(F[\![t]\!])$.
\end{definition}

\vspace{\baselineskip}

In this paper, definable means definable in the Denef-Pas language. \\

\begin{definition}
    If $D$ is a subassignment of $h[m,n,r]$, we define $\text{dim}(D)$ as the dimension of the image of $D$ under the projection $h[m,n,r] \longrightarrow h[m,0,0]$.
\end{definition}

\vspace{\baselineskip}

\begin{definition}
    Let $k$ be a field, the Grothendieck group of varieties $K_0(\text{Var}_k)$ is the free abelian group generated by symbols $[X]$ for the class of isomorphism of $X$ a variety (a separated scheme of finite type over $k$), with relations  $[X] = [Y]+[X \smallsetminus Y]$, where $Y$ is a closed subvariety. The fiber product induces a ring structure on $K_0(\text{Var}_k)$ with identity element the class of a point. Set $\LL:=[\mathbb{A}_k^1]$ the class of the affine line.
\end{definition} 

\vspace{\baselineskip}

\begin{example}
    It follows from the definition that $[\mathbb{P}^1_k]=\LL+1$, and by inductively using the usual affine cover of $\PP^n_k$, then $[\PP^n_k]= 1+\LL+\dots+\LL^n$.
\end{example}

\vspace{\baselineskip}

We consider the localization
$$
\mathcal{M}_k := K_0(\text{Var}_k)\left[ \LL^{-1}, \left\{ \frac{1}{1-\LL^{-i}} \right\}_{i >0} \right].
$$
The topology on $\mathcal{M}_k$ is induced by the $\LL-$degree. \\

In \cite{cluckers_constructible_2008}, Cluckers and Loeser assign to some definable subset $D$ of dimension $d$ an element of $\mathcal{M}_k$, called the motivic volume of $D$, that we denote by $\mu_d(D)$. They define more generally a whole class of socalled motivic constructible functions that are stable under integration and with good properties like Fubini’s theorem and change of variable formula. For example, we want to integrate over a definable set $A$ functions $\LL^\alpha$, where $\alpha$ is some definable $\Z$-valued function on $A$. In this paper, we will use the change of variable formula from \cite{cluckers_constructible_2008}, in the particular case of birational morphism between algebraic varieties (see also Lemma 3.3 of \cite{denef_germs_1999}): 

\vspace{\baselineskip}

\begin{theorem}(Theorem 12.1.1 of \cite{cluckers_constructible_2008}, or Lemma 3.3 of \cite{denef_germs_1999})\label{thm: change var}
    Let $h : Y \longrightarrow X$ a proper birational morphism between two algebraic varities over $k$ of pure dimension $d$, consider $A$ a definable set of $\underline{X}$, $\alpha : A \longrightarrow \N$ a simple function. Then we have the following equality:
    $$
    \int_{A} \LL^{-\alpha}|\omega_X| = \int_{h^{-1}(A)} \LL^{-\alpha \circ h - v(h^*(\Omega_X^d))}|\omega_Y|,
    $$
    where $|\omega_X|,|\omega_Y|$ are the canonical volume forms associated to $\underline{X}$ and $\underline{Y}$. (See Section \ref{section : properties} for the definition of the canonical volume form.)
\end{theorem}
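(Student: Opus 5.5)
This is a cited result, so the plan is to reduce it to the Denef--Loeser change of variables on arc spaces and check that the constructible-function formalism of Cluckers--Loeser matches it. Throughout, $\underline{X}$ is identified with the arc space $\mathcal{L}(X)$ ($\C[\![t]\!]$-points), definable subsets with semi-algebraic (cylinder-approximable) subsets, and $|\omega_X|$ with the normalized motivic measure. The term $v(h^*(\Omega^d_X))$ is read as the order function $\operatorname{ord}_t \mathrm{Jac}_h$ of the Jacobian ideal: the ideal generated by pullbacks of $d$-forms on $X$ inside $\Omega^d_Y$, which is invertible on the smooth locus of $Y$.

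\textbf{Step 1: discard measure-zero sets.} Because $h$ is proper birational, there is a proper closed $Z\subset X$, containing $\mathrm{Sing}(X)$, such that $h$ is an isomorphism over $X\smallsetminus Z$. Arcs contained entirely in $Z$ (respectively in $h^{-1}(Z)$) form a set of measure zero, since $\dim Z<d$. By properness, every arc of $X$ not contained in $Z$ lifts uniquely to $Y$ (valuative criterion). So $h$ induces a bijection between $h^{-1}(A)$ and $A$ up to null sets, and both sides may be computed on these complements.

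\textbf{Step 2: stratify and fibre.} Next I would use definability of $\alpha$ and $\operatorname{ord}_t\mathrm{Jac}_h$ to partition $h^{-1}(A)$ into countably many definable pieces $B_{e,a}$ on which $\operatorname{ord}_t\mathrm{Jac}_h=e$ and $\alpha\circ h=a$ are constant. By $\sigma$-additivity and convergence in the $\LL$-adic topology, it suffices to show $\mu(h(B_{e,a}))=\LL^{-e}\mu(B_{e,a})$. For this I would invoke the key lemma of Denef--Loeser (Lemma 3.4 in \cite{denef_germs_1999}): for $n\ge 2e$, the truncation map $h_n:\pi_n(B_{e,a})\to\pi_n(h(B_{e,a}))$ is a piecewise trivial fibration with fibre $\mathbb{A}^e$. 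Two facts feed into it. Arcs with the same image and the same $n$-jet agree modulo $t^{n-e}$, by a Hensel/Taylor expansion argument. The fibre is an affine space of dimension $e$, coming from the cokernel of $dh$ along the arc, which is $\C[\![t]\!]$-torsion of length $e$. Since the measures are limits of $[\pi_n(\cdot)]\LL^{-nd}$, this gives the factor $\LL^{-e}$.

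\textbf{Step 3: reassemble.} Summing over $(e,a)$ gives the formula. For the Cluckers--Loeser version, Theorem 12.1.1 of \cite{cluckers_constructible_2008} states the same thing for definable isomorphisms between definable sets, with Jacobian $\LL^{-\operatorname{ord}\mathrm{Jac}}$. Step 1 shows that $h$ restricts to such a definable bijection off null sets, and the Jacobian of this bijection is computed by $\operatorname{ord}_t\mathrm{Jac}_h$.

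\textbf{Main obstacle.} The hard part is the fibration lemma in Step 2 when $X$ is singular. There the canonical volume form on $\underline{X}$ must be normalized so that it is compatible with $\Omega^d_X$ rather than with the dualizing sheaf; otherwise extra correction terms from the Nash blow-up appear. I would handle this by first working on the Nash modification, where $\Omega^d_X$ becomes locally free modulo torsion, and then pushing down. This is also why Mather discrepancies appear later in the paper.
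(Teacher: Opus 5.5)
The paper gives no proof of this statement; it is quoted from Denef--Loeser (Lemma 3.3) and Cluckers--Loeser. Your Steps 1--3 are the standard argument behind that citation: lift arcs off a null set by the valuative criterion, stratify by $e=\mathrm{ord}_t\mathcal{J}$ where $\mathrm{Im}(h^*\Omega^d_X\to\Omega^d_Y)=\mathcal{J}\,\Omega^d_Y$, and apply the $\mathbb{A}^e$-fibration lemma. So your route is the cited one, but two things need correcting.

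\textbf{The smoothness of $Y$.} Denef--Loeser's Lemmas 3.3 and 3.4 assume $Y$ smooth. The statement as printed omits this hypothesis, but it holds wherever the paper uses the theorem, since $Y$ is a resolution. Your argument needs it, for two reasons:
\begin{itemize}
\item Your order function is only defined where $\Omega^d_Y$ is invertible. Yet the arcs of $Y$ with closed point in $\mathrm{Sing}(Y)$ and generic point in $Y_{\mathrm{reg}}$ form a set of nonzero measure, so you cannot discard them.
\item Near $\mathrm{Sing}(Y)$ the truncation maps of $\mathcal{L}(Y)$ are not $\mathbb{A}^d$-fibrations, so the fibre count in Step 2 breaks down.
\end{itemize}

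\textbf{The ``main obstacle'' is misdiagnosed.}
\begin{itemize}
\item Lemma 3.4 of Denef--Loeser already allows $X$ singular; only $Y$ must be smooth.
\item There is no normalization problem. The canonical volume form is $|\omega_X|=\max_I|dx_I|$, and the $dx_I$ generate $\Omega^d_X$. Hence their pullbacks generate $\mathcal{J}\,\Omega^d_Y$. Since the $dy_J$ generate the invertible sheaf $\Omega^d_Y$, this gives $h^*|\omega_X|=\LL^{-\mathrm{ord}_t\mathcal{J}}|\omega_Y|$ exactly.
\item With this identity, Step 3 on its own already proves the formula in the paper's setting, without Step 2. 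That step is the Cluckers--Loeser change of variables for the definable bijection that $h$ induces off null sets.
\item The detour through the Nash modification would not help anyway. $\hat X$ is singular in general, and descending along $\nu:\hat X\to X$ is itself an instance of the theorem you are proving.
\end{itemize}
In the paper, the Nash blow-up enters only later, in the lemma $\mathrm{ord}_{E_i}(\pi^*\omega)=\hat k_i$. There it makes $\mathcal{J}$ invertible, so that $\mathrm{ord}_t\mathcal{J}$ can be read off from $\hat K_{Y/X}$.
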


\vspace{\baselineskip}

\begin{remark}
    When $X$ is a smooth $k-$variety of pure dimension $d$, then $\mu_d(\underline{X})=[X]\LL^{-d}$. 
\end{remark}

\section{Introduction to the inner rates of a complex surface}\label{section : taux internes}

In this section, we recall some definitions and results of \cite{silva_inner_2022} about the metric structure of the germ of an isolated complex
surface singularity $(X, 0)$. \\

We recall the big-Theta asymptotic notations of Bachmann-Landau in the following form: given two function germs $f,g : ([0,\infty),0) \longrightarrow [0,\infty)$ we say $f$ is \textit{big-Theta} of $g$ and we write $f(t)=\varTheta(g(t))$ if there exist real numbers $\eta >0$ and $K>0$ such that for all $t$, if $f(t) \leq \eta$ then $K^{-1}g(t)\leq f(t) \leq Kg(t)$. \\

Let $(\gamma,0)$ and $(\gamma',0)$ be two distinct germs of complex curves on the surface germ $(X,0)$. Denote by $d_{\text{inn}}$ the inner distance on $(X,0)$, obtained by taking infimum of length of paths on $X$ between the two points. Denote by $\mathbb{S}_\varepsilon$ the sphere in $\C^n$ centered in 0 and of radius $\varepsilon$. The inner contact between $\gamma$ and $\gamma'$ is the rational number $q_i$ such that:
$$
d_{\text{inn}}(\gamma \cap \mathbb{S}_\varepsilon,\gamma' \cap \mathbb{S}_\varepsilon) = \varTheta(\varepsilon^{q_i}).
$$
The existence and the rationality of $q_i$ is recalled in Lemma \ref{lem : inner rate}. \\

Recall that if $\pi : Y \longrightarrow X$ is a resolution of $X$ and if $E$ is an irreducible component of $\pi^{-1}(0)$, a curvette of $E$ is a smooth curve germ $(\gamma,p)$ in $Y$, where $p$ is a point of $E$ which is a smooth point of $\pi^{-1}(0)$ and such that $\gamma$ and $E$ intersect transversaly.
We call \textit{good} resolution of $(X,0)$ a proper morphism $\pi : Y \longrightarrow X$ such that $Y$ is regular, $\pi$ is an isomorphism outside of its excepional locus $\pi^{-1}(0)$, and the latter is a strict normal crossing divisor on $Y$. The fact that the exceptional divisor has normal crossing allows us to associate with it its dual graph $\Gamma_\pi$, which is the graph whose vertices are in bijection with the irreducible components of $\pi^{-1}(0)$, and where two vertices of $\Gamma_\pi$ are joined by an edge for each point of intersection of the corresponding components.

\vspace{\baselineskip}

\begin{definition}
    Denote by $\mathcal{O}=\widehat{\mathcal{O}_{X,0}}$ the completion of the local ring of $X$ at 0 with respect to its maximal ideal. A semivaluation on $\mathcal{O}$ is a map $v : \mathcal{O} \longrightarrow \R_+ \cup \{+\infty\}$ such that, for every $f,g$ in $\mathcal{O}$ and every $\lambda \in \C$, we have:
    \begin{enumerate}
        \item $v(fg)=v(f)+v(g)$,
        \item $v(f+g) \geq \text{min}\{v(f),v(g)\}$,
        \item $v(\lambda)= \left\{
    \begin{array}{ll}
        +\infty & \mbox{if } \lambda =0 \\
        0 & \mbox{otherwise.}
    \end{array}
\right.$
    \end{enumerate}
\end{definition}

\vspace{\baselineskip}

\begin{example}(Example 2.1 of \cite{silva_inner_2022})
    Let $\pi :Y \longrightarrow X$ be a good resolution of $(X,0)$ and let $E$ be an irreducible component of the exceptional divisor $\pi^{-1}(0)$. Then the map:
    $$
    \begin{tabular}{ccccc}
        $v_E :$ & $\mathcal{O}$ & $\longrightarrow$ & $\R_+ \cup \{+\infty\}$\\
         & $f$ & $\mapsto$ & $\frac{\text{ord}_E(f)}{\text{ord}_E(\mathfrak{M})}$
    \end{tabular}
    $$
    is a valuation on $\mathcal{O}$, where $\mathfrak{M}$ is the maximal ideal of $\mathcal{O}$, called the divisorial valuation associated with $E$. This valuation is independent of the choice of $\pi$. If $v$ is the divisorial valuation associated with a component of the exceptional divisor $\pi^{-1}(0)$ of a good resolution $\pi : Y \longrightarrow X$ of $(X,0)$, we denote by $E_v$ this prime divisor, and by $m_v$ the positive integer $\text{ord}_{E_v}(\mathfrak{M})$, called the multiplicity of $E_v$. This terminology is justified by the fact that $m_v$ is also the multiplicity of $E_v$ in $\pi^{-1}(0)$ considered with its natural scheme structure.
\end{example}

\vspace{\baselineskip}

\begin{definition}
    The \textit{non-archimedean link} $\text{NL}(X,0)$ of $(X,0)$ is the topological space whose underlying set is 
    $$
    \{v : \mathcal{O} \longrightarrow \R_+\cup\{+\infty\} \text{ semi-valuation } \mid v(\mathfrak{M})=\{1\} \}
    $$
    where $\mathcal{O}=\widehat{\mathcal{O}_{X,0}}$ the completion of the local ring of $X$ at 0 with respect to its maximal ideal $\mathfrak{M}$.
    Its topology is induced from the product topology of $(\R_+\cup \{+\infty\})^\mathfrak{M}$.
\end{definition}

\vspace{\baselineskip}

\begin{remark}
    If $(X,0)=(\C^2,0)$ is a smooth surface germ, its non-archimedean link $\text{NL}(\C^2,0)$ is the \textit{valuative tree}, that was studied in \cite{favre_valuative_2002}.
\end{remark}

\vspace{\baselineskip}

Given any good resolution $\pi : Y \longrightarrow X$ of $(X,0)$ with dual graph $\Gamma_\pi$, there exist a natural embedding 
$$
i_\pi : \Gamma_\pi \hookrightarrow \text{NL}(X,0)
$$
and a canonical continuous retraction 
$$
r_\pi : \text{NL}(X,0) \longrightarrow \Gamma_\pi
$$
such that $r_\pi \circ i_\pi = \text{id}_{\Gamma_\pi}$. 
The continuous retractions $r_\pi$ induce a natural homeomorphism 
$$
\text{NL}(X,0) \overset{\simeq}{\longrightarrow} \lim\limits_{\overset{\longleftarrow}{\pi}} \Gamma_\pi.
$$
where $\pi$ ranges over the filtered family of good resolutions of $(X,0)$. This means that the non-archimedean link $\text{NL}(X,0)$ can be thought of as a universal dual graph, making it a very convenient object for studying the
combinatorics of the resolutions of $(X,0)$. One can find more details about the structure of the non-archimedean link in terms of the dual graph of the good resolutions in Section 2.1 of \cite{silva_inner_2022}.

\vspace{\baselineskip}

\begin{lemma}[3.2 of \cite{silva_inner_2022}]\label{lem : inner rate}
Let $v \in \text{NL}(X,0)$ be a divisorial valuation on $(X,0)$ and let $\pi : Y \longrightarrow X$ be a good resolution of $(X,0)$ which factors through the blowup of the maximal ideal and through the Nash transform of $(X,0)$ and such that $v$ is the divisorial valuation associated with an irreducible component $E_v$ of $\pi^{-1}(0)$. Consider two curvettes $\gamma^*$ and $\tilde{\gamma}^*$ of $E_v$ meeting it a distinct points, and write $\gamma = \pi(\gamma^*)$ and $\tilde{\gamma} = \pi(\tilde{\gamma}^*)$. Then the inner contact $q_i^X(\gamma,\tilde{\gamma})$ between $\gamma$ and $\tilde{\gamma}$ only depends on $v$ and not on the choice of $\pi, \gamma^*$ and $\tilde{\gamma}^*$. Moreover, $m_vq_i^X(\gamma,\tilde{\gamma})$ is an integer, $q_i^X(\gamma,\tilde{\gamma}) \geq 1$, and if $l : (X,0) \longrightarrow (\C^2,0)$ is a generic projection with respect to $\pi$ we have $q_i^X(\gamma,\tilde{\gamma})=q_i^{\C^2}(l(\gamma),l(\tilde{\gamma}))$.
\end{lemma}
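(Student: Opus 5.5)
The plan is to reduce everything to the plane $(\C^2,0)$ through a generic projection, and then read off the inner contact from Puiseux expansions there. I would first fix a generic projection $l:(X,0)\to(\C^2,0)$ with respect to $\pi$. Because $\pi$ factors through the Nash transform, the Gauss map lifts to a regular map on $Y$. The limits of tangent planes along $E_v$ are then described by this lifted map, and the strict transform $\Pi_l^*$ of the polar curve meets $E_v$ only at finitely many points. Since $\pi$ also factors through the blowup of the maximal ideal, the same holds for the strict transforms of the curves $l^{-1}(\text{lines})$ which are tangent to the kernel of $l$.

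\textbf{Step 1 (local bilipschitz comparison).} Away from these finitely many points and from the double points of $\pi^{-1}(0)$, take a small neighbourhood $\mathcal{N}$ in $Y$ of the remaining compact part of $E_v$. On $\pi(\mathcal{N})\smallsetminus\{0\}$ the limit tangent planes stay uniformly transverse to $\ker(dl)$. A standard argument, as for polar wedges, should then show that $l$ restricted to $\pi(\mathcal{N})$ is a finite cover that is inner bilipschitz onto its image, with constants independent of $\varepsilon$. Curvettes of $E_v$ through generic points lie in such a region, and the inner distance between $\gamma\cap\mathbb{S}_\varepsilon$ and $\tilde\gamma\cap\mathbb{S}_\varepsilon$ can be realized by paths inside $\pi(\mathcal{N})$ up to a constant factor. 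Together these give $q_i^X(\gamma,\tilde\gamma)=q_i^{\C^2}(l(\gamma),l(\tilde\gamma))$. I expect this step to be the main obstacle. The difficulty is controlling the inner geodesics so that they do not shortcut through regions near the polar curve or near the other components. I would first try to bound the inner distance from above by the lift of a path in $l(\pi(\mathcal N))$, and from below using the fact that $l$ is $1$-Lipschitz for the outer metric together with the bilipschitz constant on the relevant sheet.

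\textbf{Step 2 (computation in $\C^2$).} In $\C^2$ inner and outer metrics coincide, so $q_i^{\C^2}$ is the usual contact exponent of the plane curves $l(\gamma)$ and $l(\tilde\gamma)$. I would parametrize $\gamma^*$ by a coordinate $s$ transverse to $E_v$, so that the maximal ideal pulls back to order $m_v$ in $s$. Then $\|\pi(\gamma^*(s))\|=\varTheta(|s|^{m_v})$. The two image curves have expansions that agree up to some order $k$ in $s$ and differ at order $k$, the difference being caused by the distinct points of $E_v$. This gives $q=k/m_v$, so $m_vq\in\Z$. Moreover $q\ge 1$, since the distance between points on the sphere of radius $\varepsilon$ is $O(\varepsilon)$.

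\textbf{Step 3 (independence).} The exponent $k$ is determined by the divisorial valuation $l_*v$ on $\C^2$: it is the order along $E_v$ of the function separating the generic points of $E_v$, i.e. in valuative-tree terms the skewness of $l_*v$. Hence it does not depend on the choice of curvettes, as long as they pass through generic points, and by Step 1 this extends to all distinct points. It also does not depend on $\pi$: for a second resolution, pass to a common refinement $\pi'$. The strict transform of $E_v$ is the component associated with $v$, and curvettes of $E_v$ at generic points lift to curvettes of that component. Since $v$ is fixed, this yields independence of both $\pi$ and the choice of $\gamma^*,\tilde\gamma^*$.
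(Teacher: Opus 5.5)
The paper does not prove this lemma; it quotes it from \cite{silva_inner_2022}. So your argument has to stand on its own, and it has a genuine gap in Step~1. You rightly flag Step~1 as the crux, but the problem is not geodesics shortcutting through other regions. It is that a generic $l$ restricted to $\pi(\mathcal N)$ is only \emph{locally} inner bilipschitz. It is a covering onto its image, in general of degree $>1$, because the induced map from $E_v$ onto the divisor $C$ of $l_*v$ (in a modification of $\C^2$) is a branched cover. So it is not bilipschitz onto its image, and the lift of a short path from $l(\gamma)$ to $l(\tilde\gamma)$ may end on another preimage of $l(\tilde\gamma)$ rather than on $\tilde\gamma$.

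This breaks the conclusion, not just the proof. Write $p=\gamma^*\cap E_v$ and $\tilde p=\tilde\gamma^*\cap E_v$. The equality $q_i^X(\gamma,\tilde\gamma)=q_i^{\C^2}(l(\gamma),l(\tilde\gamma))$ fails whenever $p$ and $\tilde p$ have the same image in $C$. On the cone $z^2=xy$, take $\pi$ the blowup of the origin and $l=(x,y)$. The curvettes $\gamma(t)=(t,t,t)$ and $\tilde\gamma(t)=(t,t,-t)$ have inner contact $1$, yet $l(\gamma)=l(\tilde\gamma)$. Changing $l$ does not always help. On $x^2=y^4+z^4$, every generic $l$ identifies the same pairs of points of the elliptic exceptional curve. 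Take $\gamma(t)=(wt^2,t,ct)$ and $\tilde\gamma(t)=(-wt^2,t,ct)$ with $w^2=1+c^4\neq 0$. Then $q_i^X(\gamma,\tilde\gamma)=1$, since a path joining them must make $(y,z)$ go around the branch lines $y^4+z^4=0$, which lie at distance $\asymp\varepsilon$. But $l(\gamma)$ and $l(\tilde\gamma)$ are smooth and tangent, so their contact is at least $2$. Your Step~2 phrase ``the difference being caused by the distinct points of $E_v$'' silently assumes that $p$ and $\tilde p$ stay distinct after projection.

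Your method does prove the result for $p,\tilde p$ off $\Pi_l^*$ whose images in $C$ are distinct. The lower bound on $d_{\text{inn}}$ follows from $l$ being Lipschitz on all of $X$, so you do not need the unproved claim that inner geodesics stay in $\pi(\mathcal N)$. For the upper bound, take a path at level $\varepsilon$ in $\mathcal N$ lying over a path in $E_v$ from $p$ to $\tilde p$ that avoids the bad points, and use the local bilipschitz property along it. This path ends on $\tilde\gamma$ by construction. Hence $q_i^X(\gamma,\tilde\gamma)=q_{l_*v}$ for such pairs, and since the left side is intrinsic, $q_{l_*v}$ does not depend on $l$.

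However, ``by Step~1 this extends to all distinct points'' does not follow. For points of $\Pi_l^*\cap E_v$, and above all for pairs identified by every generic projection, $l$ gives only the upper bound. You need an intrinsic lower bound on the inner distance, and the proposal does not provide one. The examples also show that the last assertion of the lemma can only hold for curvettes whose images in $C$ are distinct.

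A smaller error: the exponent is not the skewness of $l_*v$. For curvettes $y=x^{3/2}+cx^{7/4}$ of the corresponding plane divisor, the contact is $7/4$ while the skewness is $13/8$. The correct invariant is the normalized log discrepancy minus one, as in Lemma~\ref{lem: disc smooth}.
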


\vspace{\baselineskip}

\begin{definition}
We denote by $q_v$ the rational number $q_i^X(\gamma,\tilde{\gamma})$ and call it the \textit{inner rate} of $v$. In the setting of a good resolution, every irreducible component $E_i$ of the exceptional divisor $\pi^{-1}(0)$ determines a valuation $v_i$; to simplify notations throughout the paper, we write $q_i:=q_{v_i}$.
\end{definition}

\vspace{\baselineskip}

We have the following computation lemma for the inner rate of divisorial valuation of a singular germ $(X,0)$.

\vspace{\baselineskip}

\begin{lemma}[3.6 of \cite{silva_inner_2022}] \label{calcul taux}
Let $\pi : Y \longrightarrow X$ be a good resolution of $(X,0)$ which factors through the blowup of the maximal ideal and through the Nash transform of $(X,0)$. Let $p$  be a point of the exceptional divisor $\pi^{-1}(0)$ and let $E_w$ be the exceptional component created by the blowup of $Y$ at $p$. Then: 
\begin{enumerate}[label=$(\roman*)$]
\item If $p$ is a smooth point of $\pi^{-1}(0)$ and $E_v$ is the irreducible component of $\pi^{-1}(0)$ in which $p$ lies, then:
$$
m_w = m_v \text{ and } q_w = q_v + \frac{1}{m_v}.
$$
\item If $p$ lies in the intersection of two irreducible components $E_v$ and $E_{v'}$ of $\pi^{-1}(0)$, then: 
$$
m_w = m_v+m_{v'} \text{ and } q_w = \frac{m_vq_v+m_{v'}q_{v'}}{m_v+m_{v'}}.
$$
\end{enumerate}
 
\end{lemma}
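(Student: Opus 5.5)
Since the blowup formulas are local, I will prove the statement by working in local coordinates near $p$ and comparing orders of vanishing on a blowup, using the characterization of inner rates from Lemma \ref{lem : inner rate}.

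\textbf{Multiplicities.} Let $\sigma : Y' \longrightarrow Y$ be the blowup of $p$ and $E_w=\sigma^{-1}(p)$. Then $\pi\circ\sigma$ is again a good resolution factoring through the blowup of the maximal ideal and the Nash transform, so Lemma \ref{lem : inner rate} applies to $E_w$. Take local coordinates $(x,y)$ at $p$.
\begin{itemize}
\item In case (ii), with $E_v=\{x=0\}$ and $E_{v'}=\{y=0\}$, the maximal ideal pulls back locally to $(x^{m_v}y^{m_{v'}})\cdot I$, where $I$ has no base point at $p$. This holds because, $\pi$ factoring through the blowup of $\mathfrak{M}$, the ideal $\mathfrak{M}\mathcal{O}_Y$ is invertible.
\item In case (i), with $E_v=\{x=0\}$, it is $(x^{m_v})$.
\end{itemize}
Since $\mathrm{ord}_{E_w}(x)=\mathrm{ord}_{E_w}(y)=1$, pulling back gives $m_w=m_v+m_{v'}$ in case (ii) and $m_w=m_v$ in case (i).

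\textbf{Inner rates.} I will use the generic projection $l$ and the equality $q_i^X=q_i^{\C^2}(l(\gamma),l(\tilde\gamma))$ from Lemma \ref{lem : inner rate}. Concretely, the plan is to compute the inner contact via an arc parametrization. Choose a curvette $\gamma^*$ of $E_v$ given by $x=t$, $y=c$ (in case (ii), $y=c$ with $c\neq0$ small). A second curvette $\tilde\gamma^*$ is given by $y=\tilde c$.

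For case (i), curvettes of $E_w$ are $x=t$, $y=a t$ for varying $a$. Two such curvettes with different $a$ agree to first order in $t$ with the curvette $y=0$ of $E_v$. Their difference along the fibre direction is of order $t$, whereas two curvettes of $E_v$ differ at order $t^0$.

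Because $\pi$ factors through the Nash transform, the inner metric is locally comparable to the pullback metric on $Y$ scaled by the "inner rate function". More precisely, I would use that $d_{\mathrm{inn}}$ between nearby points of $\pi(\mathcal N(E_v))$ at distance $\varepsilon=|t|^{m_v}$ from $0$ behaves like $\varepsilon^{q_v}\cdot|\Delta|$, where $\Delta$ is the displacement along $E_v$. Then displacement of order $|t|=\varepsilon^{1/m_v}$ gives $q_w=q_v+1/m_v$.

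For case (ii), curvettes of $E_w$ are $x=s^{\alpha}$-type arcs $x=t,\ y=at$, so $\varepsilon=|t|^{m_v+m_{v'}}$. The local distance function interpolates monomially as $|x|^{m_vq_v}|y|^{m_{v'}q_{v'}}$ up to constants. This interpolation is the content of the piecewise-linearity of the inner rate function along edges in \cite{silva_inner_2022}. It gives contact order $\varepsilon^{(m_vq_v+m_{v'}q_{v'})/(m_v+m_{v'})}$.

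\textbf{Main obstacle.} The hard step is justifying the local metric model: that near $E_v$ (respectively near $E_v\cap E_{v'}$) the inner distance between curvettes is governed by the monomial $|x|^{m_vq_v}$ (respectively $|x|^{m_vq_v}|y|^{m_{v'}q_{v'}}$). I would try to reduce to the smooth case $(\C^2,0)$ via the generic projection $l$. There, inner rates of divisorial valuations are computable directly from Puiseux-type contact orders and the formulas above are classical. I would then transport back using $q^X=q^{\C^2}\circ l$, taking care that the multiplicities $m$ refer to $X$, not to the image in $\C^2$.
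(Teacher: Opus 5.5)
The paper gives no proof of this lemma; it is quoted from \cite{silva_inner_2022}, so your sketch has to stand on its own. The multiplicity part does. Since $\mathfrak M\mathcal O_Y$ is invertible with local generator $x^{m_v}$, resp.\ $x^{m_v}y^{m_{v'}}$ (so your $I$ is the unit ideal), applying $\mathrm{ord}_{E_w}$ gives $m_w$.

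The inner-rate part has a genuine gap, exactly at the step you flag: the ``local metric model'' is asserted, not derived, and none of your justifications supplies it.
\begin{itemize}
\item The definition of $q_v$ only gives $d_{\mathrm{inn}}=\varTheta(\varepsilon^{q_v})$ for each \emph{fixed} pair of curvettes through distinct points, with a constant depending on the pair. Your computation in (i) needs the estimate for points whose coordinates along $E_v$ differ by $\asymp|t|\to0$; the curves $y=ax$ and $y=\tilde a x$ both pass through $p$. That requires a bound uniform in the base points, with constant $\asymp|c-\tilde c|$, and nothing in the sketch provides it.
\item For (ii), invoking linearity of $\mathcal I_X$ on edges is circular. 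In the monomial parametrization of the edge $[v,v']$, linearity evaluated at $w$ \emph{is} the weighted-average formula, so you are assuming (ii) rather than proving it.
\item Passing to $(\C^2,0)$ does not work by itself either. If $l\circ\pi$ is ramified along $E_v$ with index $e$, then $m_{l_*v}=m_v/e$. A smooth-point blowup on $E_{l_*v}$ would then raise the rate by $e/m_v$, not $1/m_v$. So the planar formulas cannot be pulled back without explicit knowledge of $l\circ\pi$ near $p$.
\end{itemize}

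The missing ingredient is a local normal form for $l\circ\pi$ at $p$, where $l=(h,h')$ is generic with respect to $\pi\circ\sigma$.
\begin{itemize}
\item Factoring through the blowup of $\mathfrak M$ lets you take coordinates with $h\circ\pi=u^{m_v}$ (resp.\ $x^{m_v}y^{m_{v'}}$).
\item Factoring through the Nash transform lets you choose $l$ with $p\notin\Pi_l^*$. Then the Jacobian of $l\circ\pi$ is a monomial times a unit at $p$.
\end{itemize}

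In case (i), after changing $v$ this gives $l\circ\pi=(u^{m_v},F(u)+u^kv)$. Lemma \ref{lem : inner rate}, applied to the curvettes $v=c$ of $E_v$ (Puiseux contact in the variable $h$), forces $k=m_vq_v$. This is the same computation that yields the coefficient $m_v(q_v+1)-1$ in the proof of Proposition \ref{disc mather}. The curvettes $v=au$ of $E_w$ then map to $(t^{m_v},F(t)+at^{m_vq_v+1})$, whence $q_w=q_v+1/m_v$.

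In case (ii), the Jacobian condition reads $(m_vy\partial_y-m_{v'}x\partial_x)(h'\circ\pi)=x^\alpha y^\beta\cdot\mathrm{unit}$. Hence $h'\circ\pi=\Phi(x^{m_v/d}y^{m_{v'}/d})+x^\alpha y^\beta(c_0+\cdots)$, with $c_0\neq0$, $d=\gcd(m_v,m_{v'})$, and necessarily $m_v\beta\neq m_{v'}\alpha$.
\begin{itemize}
\item The first summand depends only on a $d$-th root of $h$.
\item Curvettes of $E_v$ and $E_{v'}$ near $p$ give $\alpha=m_vq_v$ and $\beta=m_{v'}q_{v'}$.
\item Curvettes $y=cx$ of $E_w$, compared at equal values of $h$, separate at exponent $(\alpha+\beta)/(m_v+m_{v'})$.
\end{itemize}
This is the rigorous form of your monomial model. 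With it you need neither a metric comparison on $Y$ nor linearity on edges.
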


\begin{lemma}[3.8 of \cite{silva_inner_2022}]
    There exist a unique continuous function 
    $$
    \mathcal{I}_X : \text{NL}(X,0) \longrightarrow \R_{\geq 1} \cup \{\infty\}
    $$
    such that $\mathcal{I}_X(v)=q_v$ for every divisorial point $v$ of $\text{NL}(X,0)$. If $\pi$ is a good resolution of $(X,0)$ that factors through the blowup of the maximal ideal and the Nash transform of $(X,0)$, then $\mathcal{I}_X$ is linear on the edges of $\Gamma_\pi$ with integral slopes.
\end{lemma}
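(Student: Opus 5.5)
The approach is to build $\mathcal{I}_X$ one dual graph at a time, check that these pieces agree under refinement, and then pass to the inverse limit $\text{NL}(X,0)\simeq \varprojlim \Gamma_\pi$.

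\textbf{Uniqueness.} Divisorial valuations are dense in $\text{NL}(X,0)$: every point is the limit of its retractions $r_\pi(v)$, and on each $\Gamma_\pi$ the divisorial points are dense in every edge. The target $\R_{\geq 1}\cup\{\infty\}$ is Hausdorff. So two continuous functions that agree on divisorial points agree everywhere.

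\textbf{Existence on one graph.} Fix a good resolution $\pi$ factoring through the blowup of the maximal ideal and the Nash transform. Metrize $\Gamma_\pi$ in the standard way: an edge joining adjacent $E_v$ and $E_{v'}$ has length $1/(m_vm_{v'})$.

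Take the intersection point $p=E_v\cap E_{v'}$ with local coordinates $x,y$ cutting out $E_v,E_{v'}$. Iterated blowups at double points over $p$ create exactly the monomial divisorial valuations $v_{a,b}$, for coprime $(a,b)\in\N_{>0}^2$. By induction on these blowups (a Farey-type subdivision), Lemma~\ref{calcul taux}(ii) gives
$$m_{a,b}=am_v+bm_{v'}, \qquad m_{a,b}\,q_{a,b}=a\,m_vq_v+b\,m_{v'}q_{v'}.$$
The inductive step is just additivity of both quantities under the mediant rule.

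The point $v_{a,b}$ lies at distance $b/(m_v m_{a,b})$ from $v$. A short computation then gives
$$q_{a,b}-q_v=\frac{b}{m_vm_{a,b}}\cdot m_vm_{v'}(q_{v'}-q_v).$$
So $q$ is affine along the edge with slope $m_v(m_{v'}q_{v'})-m_{v'}(m_vq_v)$. This slope is an integer by Lemma~\ref{lem : inner rate}, since each $m_vq_v$ is an integer. Extending this affine function gives $\mathcal{I}_\pi$ on all of $\Gamma_\pi$, with integral slopes.

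\textbf{Compatibility under refinement.} If $\pi'$ dominates $\pi$, then $\Gamma_{\pi'}$ is obtained from $\Gamma_\pi$ by two operations.
First, subdividing edges: blowups at double points, already handled above.
Second, attaching new trees: blowups at smooth points of the exceptional divisor. By Lemma~\ref{calcul taux}(i), a new vertex $w$ attached to $v$ has $m_w=m_v$ and $q_w=q_v+1/m_v$. The new edge has length $1/m_v^2$, so its slope is $m_v$, again an integer. Further points on such edges are again handled by (ii).
Hence $\mathcal{I}_{\pi'}$ restricted to $\Gamma_\pi$ equals $\mathcal{I}_\pi$. Moreover, along $\Gamma_{\pi'}$ the function $\mathcal{I}_{\pi'}$ is nondecreasing when one moves away from $\Gamma_\pi$, since all new slopes are positive.

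\textbf{Passing to the limit.} Define
$$\mathcal{I}_X(v)=\lim_\pi \mathcal{I}_\pi(r_\pi(v))=\sup_\pi \mathcal{I}_\pi(r_\pi(v))\in\R_{\geq1}\cup\{\infty\}.$$
The value $\infty$ can occur, for instance at curve semivaluations.

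The main obstacle is continuity at points where the value is infinite or where infinitely many branches accumulate. Continuity at points of a fixed $\Gamma_\pi$ reduces to controlling the growth of $\mathcal{I}_X$ on the trees retracting to a point of $\Gamma_\pi$. My first attempt would be to show that $\mathcal{I}_X$ is a supremum of the continuous functions $\mathcal{I}_\pi\circ r_\pi$, which gives lower semicontinuity. For the upper bound, I would compare $\mathcal{I}_X$ with a generic projection $l$ via Lemma~\ref{lem : inner rate}. This reduces the question to the smooth case $\text{NL}(\C^2,0)$, where the inner rate is expressed through skewness/log-discrepancy on the valuative tree and its continuity is known from \cite{favre_valuative_2002}.
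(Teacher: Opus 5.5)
The paper gives no proof of this lemma; it is quoted from \cite{silva_inner_2022}. So I assess your argument on its own.

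Everything you do on a single dual graph is correct:
\begin{itemize}
\item additivity of $m$ and $mq$ under the mediant rule from Lemma~\ref{calcul taux}(ii);
\item the position $b/(m_vm_{a,b})$ of $v_{a,b}$ for the metric with edge lengths $1/(m_vm_{v'})$;
\item the integral slope $m_v(m_{v'}q_{v'})-m_{v'}(m_vq_v)$, and the slope $m_v$ on edges created by Lemma~\ref{calcul taux}(i);
\item compatibility under refinement and monotonicity away from $\Gamma_\pi$;
\item the definition $\mathcal{I}_X=\sup_\pi \mathcal{I}_\pi\circ r_\pi$.
\end{itemize}
This proves the second sentence of the statement. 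It also proves that $\mathcal{I}_X|_{\Gamma_\pi}$ is continuous for every $\pi$, and that $\mathcal{I}_X$ is lower semicontinuous on $\text{NL}(X,0)$.

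The gap is global continuity, and it cannot be closed along the route you sketch. First, the input you want from \cite{favre_valuative_2002} does not exist. Skewness and thinness (log discrepancy) on the valuative tree are only lower semicontinuous for the weak topology, not continuous. For instance, the curve valuations of the lines $y=c_nx$, with $c_n$ distinct, converge weakly to $\text{ord}_0$, while their skewness and thinness are infinite.

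More decisively, upper semicontinuity of $\mathcal{I}_X$ itself fails for the topology of pointwise convergence that the paper puts on $\text{NL}(X,0)$. Let $v$ be a vertex of $\Gamma_\pi$, let $p_n$ be distinct points of $E_v$ that are smooth points of $\pi^{-1}(0)$, and let $w_n$ be the divisorial valuation created by blowing up $p_n$. Fix $f\in\mathcal{O}$ and let $C_f$ be the strict transform of $\{f=0\}$ in $Y$. Then $\text{ord}_{E_{w_n}}(f)=\text{ord}_{E_v}(f)+\text{mult}_{p_n}(C_f)$, which equals $\text{ord}_{E_v}(f)$ as soon as $p_n$ avoids the finitely many points of $C_f\cap E_v$. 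Since also $m_{w_n}=m_v$, we get $w_n(f)=v(f)$ for $n\gg 0$, hence $w_n\to v$. Yet $\mathcal{I}_X(w_n)=q_v+1/m_v$ for every $n$ by Lemma~\ref{calcul taux}(i).

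So no extension of $v\mapsto q_v$ is continuous on $\text{NL}(X,0)$ in that topology, and your uniqueness argument is vacuous. This is exactly your own positive-slope observation, combined with the fact that every weak neighbourhood of $v$ contains all but finitely many of the branches at $v$.

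What can be proved is the version your construction already gives:
\begin{itemize}
\item $\mathcal{I}_X$ is continuous and piecewise linear with integral slopes on each $\Gamma_\pi$;
\item its value at any $v$ is the nondecreasing limit of its values at $r_\pi(v)$;
\item it is lower semicontinuous on $\text{NL}(X,0)$.
\end{itemize}
This dual-graph version is what the applications use. Uniqueness holds in this sense, by density of divisorial points in the edges. You should state and prove the lemma in that form rather than aim for continuity in the weak topology.
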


\section{Discrepancies and the Mather canonical divisor}\label{section : mather}

In this section, we set $k=\C$ in order to make the link with the complex Lipschitz geometry. We will link two important notions for this paper: the Mather discrepancies and the inner rates. In the smooth case, we prove in Lemma \ref{lem: disc smooth} that the inner rate function $\mathcal{I}_{\C^2}$ is equal to the normalized log discrepancy function on $\text{NL}(\C^2,0)$, minus one. In the singular case, the inner rate can be related the Mather log discrepancy, as conjectured in the Remark 3.11 of \cite{silva_inner_2022}. We prove this in Proposition \ref{disc mather}. We will use those results in order to compute the order of the jacobian that appears in the change of variables formula while computing the local density via a morphism of resolution.

\vspace{\baselineskip}

\begin{lemma} \label{lem: disc smooth}
Consider the germ $(\C^2,0)$, then the inner rates $q_i$ associated to an irreducible component $E_i$ of the exceptional divisor is equal to the log-discrepancy normalized by the multiplicity $m_i$, minus one. In other words, $q_i = \frac{k_i^\text{log}}{m_i}-1$, where $k_i^\text{log} = k_i +1$ and $k_i = \text{ord}_{E_i}(K_{Y/\C^2})$ for $Y$ the surface obtained after blowing up, and where $K_{Y/\C^2}$ is the relative canonical divisor. 
\end{lemma}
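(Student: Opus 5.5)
Since $(\C^2,0)$ is smooth, every good resolution $\pi : Y \longrightarrow \C^2$ is a composition of point blowups. Both sides of $q_i = \frac{k_i^\text{log}}{m_i}-1$ can be tracked through such sequences. The plan is therefore an induction on the number of blowups, using Lemma \ref{calcul taux} for the inner rates and the standard behaviour of log discrepancies under blowups for the right-hand side. We may assume $\pi$ factors through the blowup of the origin, since any exceptional component of a resolution also appears after further blowups and both quantities depend only on the valuation. The Nash transform of a smooth germ is the identity, so the hypotheses of Lemma \ref{calcul taux} are satisfied.

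\textbf{Base case.} Take the first blowup $E_0$ of the origin. Here $m_0=\text{ord}_{E_0}(\mathfrak{M})=1$ and $k_0=1$, since $K_{Y/\C^2}=E_0$; hence $k_0^\text{log}=2$. For the inner rate, two curvettes of $E_0$ at distinct points map to smooth curves with distinct tangent lines in $\C^2$. Their inner (equal to outer, since $\C^2$ is smooth) distance on $\mathbb{S}_\varepsilon$ is $\varTheta(\varepsilon)$, so $q_0=1=\frac{2}{1}-1$.

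\textbf{Inductive step.} Blow up a point $p$ of the exceptional divisor, creating $E_w$, and write $K_{Y'/\C^2}=\sigma^*K_{Y/\C^2}+E_w$. Then $k_w = \text{ord}_p(K_{Y/\C^2})+1$, and likewise $m_w = \text{ord}_p(\pi^*\mathfrak{M})$.
\begin{itemize}
\item If $p$ is a smooth point of $E_v$, then $k_w=k_v+1$, so $k_w^\text{log}=k_v^\text{log}+1$, and $m_w=m_v$. Therefore
$$\frac{k_w^\text{log}}{m_w}-1=\frac{k_v^\text{log}}{m_v}-1+\frac{1}{m_v}=q_v+\frac1{m_v}=q_w$$
by Lemma \ref{calcul taux}(i).
\item If $p\in E_v\cap E_{v'}$, then $k_w=k_v+k_{v'}+1$, so $k_w^\text{log}=k_v^\text{log}+k_{v'}^\text{log}$, and $m_w=m_v+m_{v'}$. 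Hence
$$\frac{k_w^\text{log}}{m_w}-1=\frac{m_v(q_v+1)+m_{v'}(q_{v'}+1)}{m_v+m_{v'}}-1=\frac{m_vq_v+m_{v'}q_{v'}}{m_v+m_{v'}},$$
which is $q_w$ by Lemma \ref{calcul taux}(ii).
\end{itemize}
Components already present keep their valuations, so their invariants are unchanged. This closes the induction.

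The main obstacle is bookkeeping rather than conceptual. One must make sure the multiplicities $m_w$ computed via $\text{ord}_{E}(\mathfrak{M})$ satisfy the same additive rules as the discrepancies. This holds because $\mathfrak{M}$ becomes locally principal after the first blowup, so $\pi^*\mathfrak{M}$ is a divisor whose order at $p$ is $m_v$ or $m_v+m_{v'}$. One must also justify that a good resolution not a priori factoring through the first blowup can be dominated by one that does without changing $q_i$ or $k_i^\text{log}/m_i$. Both invariants are attached to the divisorial valuation (Lemma \ref{lem : inner rate} and birational invariance of discrepancies), so this reduction is harmless.
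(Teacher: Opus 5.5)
Your proposal is correct and follows the same route as the paper: an induction on the sequence of point blowups, combining $K_{Y'/\C^2}=\sigma^*K_{Y/\C^2}+E_w$ with the blowup rules of Lemma \ref{calcul taux}. Your bookkeeping is more complete than the written proof, which uses only $q_n=q_{n-1}+1$ and $q_n=\frac{q_{n-1}+q_{n-2}}{2}$, i.e.\ unit multiplicities and blowups on the last created component; you instead carry general multiplicities $m_v$ through $q_w=q_v+\frac{1}{m_v}$ and the $m$-weighted average, which is what an arbitrary blowup sequence requires.
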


\vspace{\baselineskip}

\begin{proof}
We prove the result by induction on the number of blow-ups. Consider 
$$
X_{n-1} \xrightarrow{\pi_{n-1}} \dots \longrightarrow X_1 \xrightarrow{\pi_1} \C^2.
$$
Up to reordinaring, one may assume that: either $\pi_n$ is the blow-up of $p \in E_{n-1}^0:=E_{n-1} \smallsetminus \cup_{j \neq n-1}E_j$, or $\pi_n$ is the blow-up of $E_{n-1}\cap E_{n-2}$ with convention that $E_0^0=\{0\}$. 

If $\pi_n$ is the blowup created by blowing up a point $p \in E_{n-1}$ the exceptional divisor created by $\pi_{n-1}$, we have the following equality: 
$$
K_{X_n/X_{n-1}} = E_n = K_{X_n} - \pi_n^* K_{X_{n-1}}
$$
where $K_{X_{n-1}} = \sum_{i=1}^{n-1} k_{i} E_i$ is the canonical divisor of $X_{n-1}$ and $E_n$ is the exceptional divisor created by $\pi_n$. Hence, 

\begin{align*}
K_{X_n} & = E_n + \pi_n^* K_{X_{n-1}} \\
& = (1+k_{n-1})E_n + \sum_{i=1}^{n-1} k_{i} E_i \\
& = K_{X_n/\C^2}
\end{align*}

With Lemma \ref{calcul taux}, we have that $q_{n} = q_{n-1} +1$, and by the hypothesis of induction, $q_{n-1} = \frac{k_{n-1}+1}{1}-1$. Hence, $q_{n} = k_{n-1}+1 = k_{n}$ which is equal to $\frac{k_{n}+1}{1}-1$ the property at rank $n$.

\vspace{\baselineskip}

Now consider $\pi_n$ the blowup created by the blowup of $E_{n-1} \cap E_{n-2}$. Then:
\begin{align*}
K_{X_n} & = E_n + \pi_n^* K_{X_{n-1}} \\
& = (1+k_{n-1}+k_{n-2})E_n + \sum_{i=1}^{n-1} k_{i} E_i \\
\end{align*}

Using Lemma \ref{calcul taux}, we have that $q_{n} = \frac{q_{n-1}+q_{n-2}}{2}$ and by the induction hypothesis, $q_{n} = \frac{k_{n-1}+k_{n-2}}{2} = \frac{k_{n}-1}{2}$. The property we want at rank $n$ is $q_{n} = \frac{k_{n}+1}{2}-1$, which we have.

\end{proof}

In the singular case, we need to modify the definition of discrepancy. Given an arbitrary variety $X$, we take a resolution of singularities $\pi : Y\longrightarrow X$ that factors through the Nash blow-up and define the relative Mather canonical divisor $\hat{K}_{Y/X}$.  This divisor, which is defined in total generality, is always an effective integral divisor, and it coincides with $K_{Y/X}$ when $X$ is smooth (the two divisors are in general different for
$\Q$-Gorenstein varieties). We recall the following definitions, assuming that all varieties are irreducible and reduced,
over a fixed algebraically closed field $k$ of characteristic zero, see \cite{ishii_mather_2011}, \cite{ein_multiplier_2011} for more details. \\

Let $X$ be a $\Q-$Gorenstein variety of index $r$ (i.e. normal and its canonical class $K_X$ is $\Q-$Cartier, so $rK_X$ is Cartier), and $\pi : Y \longrightarrow X$ a resolution of singularities of $X$. Then the usual discrepancy divisor $K_{Y/X}$ is the unique $\Q$-Cartier divisor supported on the exceptional locus of $\pi$ such that $rK_{Y/X}$ is linearly equivalent with $rK_Y-\pi^*(rK_X)$. Note that the usual discrepancy is defined only for a $\Q-$Gorenstein variety $X$, and the following Mather discrepancy is defined for every variety, even for non-normal variety. 

\vspace{\baselineskip}

\begin{definition}
    Let $X$ be an $n$-dimensional variety. The sheaf $\Omega_X^n = \wedge^n\Omega_X$ is invertible over the smooth locus $X_\text{reg}$ of $X$, hence the morphism 
    $$
    \pi : \PP(\Omega_X^n) = Proj(\text{Sym}(\Omega_X^n)) \longrightarrow X
    $$
    is an isomorphism over $X_\text{reg}$. The \textit{Nash blow-up} $\nash$ is the closure of $\pi^{-1}({X_\text{reg}})$ in $\PP(\Omega_X^n)$ (with the reduced scheme structure). Note that by construction we have a projective birational morphism $\nu : \nash \longrightarrow X$ and a surjective morphism $\nu^*(\Omega_X^n) \longrightarrow \mathcal{O}_{\PP(\Omega_X^n)}(1)_{|\nash}$. Set $\sheaf := \mathcal{O}_{\PP(\Omega_X^n)}(1)_{|\nash}$, it is an ample line bundle over $\hat{X}$. Furthermore, this is universal in the following sense: given a birational morphism of varieties $\phi :Y \longrightarrow X$, this factors through $\nu$ if and only if $\phi^*(\Omega_X^n)$ admits a morphism onto an invertible sheaf $\mathcal{L}$ on $Y$ (which identifies $\mathcal{L}$ to the quotient of $\phi^*(\Omega_X^n)$ by its torsion). 
\end{definition}

\vspace{\baselineskip}

\begin{definition}\label{def:mather disc}
Let $X$ be a variety of dimension $n$ and $f : Y \longrightarrow X$ be a resolution of the singularities, factoring through the Nash blow-up. Then, the image of the canonical homomorphism 
$$
f^* \wedge^n\Omega_X \longrightarrow \wedge^n\Omega_Y
$$
is an invertible sheaf of the form $J\wedge^n\Omega_Y$, where $J$ is the invertible ideal sheaf of $\mathcal{O}_Y$ that defines an effective divisor supported on the exceptional locus of $f$. This divisor is called the \textit{Mather discrepancy divisor} and denoted by $\hat{K}_{Y/X}$. For every prime divisor $E$ on $Y$, we define 
$$
\hat{k}_E:= \text{ord}_E(\hat{K}_{Y/X})
$$
and
$$
\hat{k}_E^\text{log}:= \hat{k}_E +1.
$$
\end{definition}

\vspace{\baselineskip}

\begin{remark}
If $X$ is smooth, then $\hat{k}_E = k_E$ and $\hat{K}_{Y/X} = K_{Y/X}$. 
\end{remark}

\vspace{\baselineskip}

\begin{definition}
    The \textit{Jacobian ideal} of a variety $X$ denoted $Jac_X$ is the 0-th Fitting ideal $\text{Fitt}_0(\Omega_X)$ of $\Omega_X$. This is an ideal whose support is the
singular locus of $X$. If $f : Y \longrightarrow X$ is a log resolution of $Jac_X$, we denote by $J_{Y/X}$ the effective divisor on $Y$ such that $Jac_X \cdot \mathcal{O}_Y = \mathcal{O}_Y(-J_{Y/X} )$.
\end{definition}

\vspace{\baselineskip}

\begin{remark}[Remark 2.7 of \cite{ein_multiplier_2011}]
    If $X$ is normal and locally a complete intersection, then $\hat{K}_{Y/X}-J_{Y/X}=K_{Y/X}$ the usual discrepancy divisor.
\end{remark}

\vspace{\baselineskip}

The goal of this section is to prove the Proposition \ref{disc mather}, that will show the link between the Mather discrepancies and the inner rates. 

\vspace{\baselineskip}

\begin{lemma}\label{lem: ord=mather}
    Let $X$ be a surface with isolated singularity, and let $\pi : Y \longrightarrow X$ a resolution of singularity that factors through the Nash blow-up. Let $l: X \longrightarrow \C^2$ a generic projection, set $\omega :=l^*dx\wedge dy \in \Omega_X$. Then $\text{ord}_{E_i}(\pi^*\omega)=\hat{k}_{E_i}=:\hat{k}_{i}$ for $E_i$ an exceptional component of the exceptional divisor.
\end{lemma}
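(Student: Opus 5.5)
The plan is to compute $\text{ord}_{E_i}$ of both sides locally at a general point $p$ of $E_i$, using the definition of $\hat{K}_{Y/X}$ as the image of $\pi^*\Omega_X^2 \to \Omega_Y^2$.

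\textbf{Step 1.} Since $\pi$ factors through the Nash blow-up, the image of $\pi^*\Omega^2_X$ in $\Omega^2_Y$ is the invertible sheaf $\mathcal{O}_Y(-\hat{K}_{Y/X})\otimes \Omega^2_Y$ (Definition \ref{def:mather disc}). Hence for every local section $\eta$ of $\Omega^2_X$ we have $\text{ord}_{E_i}(\pi^*\eta) \geq \hat{k}_i$. In particular this holds for $\eta=\omega$, which gives the inequality $\text{ord}_{E_i}(\pi^*\omega)\geq \hat k_i$. Moreover, $\hat{k}_i$ is the minimum of $\text{ord}_{E_i}(\pi^*\eta)$ over local generators $\eta$ of $\Omega^2_X$ near $O$.

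\textbf{Step 2.} Embed $(X,O)\subset(\C^n,0)$. The sheaf $\Omega^2_X$ is generated by the forms $dx_a\wedge dx_b$ for coordinate pairs, and more generally by the pullbacks $l_L^*(dx\wedge dy)$ for linear projections $l_L:\C^n\to\C^2$. These form a finite-dimensional linear family: $\eta_L=\sum_{a<b}\Delta_{ab}(L)\,dx_a\wedge dx_b$, with coefficients the $2\times2$ minors of $L$. Pulling back and writing $\pi^*(dx_a\wedge dx_b)=g_{ab}\,\sigma$, where $\sigma$ is a local generator of the image sheaf at a general point $p\in E_i$, we get $\pi^*\eta_L=(\sum_{a<b}\Delta_{ab}(L)g_{ab})\,\sigma$. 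Here the $g_{ab}$ generate the unit ideal at $p$, so the restrictions $g_{ab}|_{E_i}$ are not all zero.

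\textbf{Step 3.} The minors $\Delta_{ab}(L)$ are the Plücker coordinates of $\ker L$. The condition that $\sum\Delta_{ab}(L)\,g_{ab}|_{E_i}\equiv 0$ is a nontrivial closed condition on $L$, since it is a linear condition in Plücker coordinates that is not identically satisfied, and the Plücker embedding is nondegenerate. There are finitely many exceptional components, so a generic $l$ avoids all these conditions simultaneously. For such $l$ we get $\text{ord}_{E_i}(\pi^*\omega)=\hat k_i$ for every $i$. This is consistent with the genericity used in Lemma \ref{lem : inner rate}: the projection is generic with respect to $\pi$, and the defining conditions form a Zariski open set.

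\textbf{Main obstacle.} The delicate point is Step 3. One must check that a single linear combination of Plücker coordinates cannot vanish identically on the Grassmannian unless all its coefficients vanish. This holds because the Plücker coordinates are linearly independent sections of $\mathcal{O}(1)$ on $G(n-2,n)$. One must also make sure that the notion of "generic projection" used in the paper includes these finitely many open conditions; if necessary, we simply add them to the genericity assumption.
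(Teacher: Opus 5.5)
Your proof is correct. It is organized differently from the paper's, and it is more complete. The paper goes through the Nash blow-up. It regards $\nu^*\omega$ as a global section of $\mathcal{O}_{\hat{X}}(1)$, uses the universal property to identify $\mu^*\mathcal{O}_{\hat{X}}(1)$ with $\mathcal{O}_Y(-\hat{K}_{Y/X})\otimes\Omega^2_Y$, and then concludes $\mathrm{div}(\pi^*\omega)=\hat{K}_{Y/X}$ from the fact that $\pi^*\omega$ is a section of this invertible sheaf.

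Taken literally, that last step only gives $\mathrm{ord}_{E_i}(\pi^*\omega)\geq\hat{k}_i$. Globally the divisor of $\pi^*\omega$ also contains the strict transform $\Pi_l^*$ of the polar curve, as the formula in the proof of Proposition \ref{disc mather} shows. This inequality is your Step 1, which you get directly from Definition \ref{def:mather disc} without the identification with $\mathcal{O}_{\hat{X}}(1)$.

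Your Steps 2--3 supply the reverse inequality, which the paper leaves implicit in the word ``generic''. In the paper's language the same point reads as follows. $\nu^*\omega$ vanishes at a point $(O,T)$ of $\nu^{-1}(O)$ exactly when $T\cap\ker l\neq 0$, which is a Schubert hyperplane condition on $l$. A generic $l$ therefore avoids the finitely many points $\mu(E_i)$ for the components contracted by $\mu$, and its zero locus contains none of the curves $\mu(E_i)$ for the other components. What your route gives is an explicit, checkable list of finitely many open conditions to add to the genericity of $l$, as you note at the end.

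Two small precisions for Step 3:
\begin{itemize}
\item The $2\times 2$ minors of $L$ are the Plücker coordinates of the row space of $L$; they describe $\ker L$ only after the usual duality. All you actually need is that they are linearly independent polynomials on the affine space of $2\times n$ matrices. This is clear, since each contains a monomial $L_{1a}L_{2b}$ that occurs in no other.
\item It suffices to impose $\sum_{a<b}\Delta_{ab}(L)\,g_{ab}(p)\neq 0$ at the single point $p$. This excludes only the zero set of one nonzero polynomial per component $E_i$.
\end{itemize}
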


\vspace{\baselineskip}

\begin{proof}
    We have the following diagram:
    $$
    \xymatrix{ Y \ar@/^2pc/[rr]^\pi \ar[r]^\mu & \hat{X} \ar[r]^\nu & X  }
    $$
    Since $dx \wedge dy$ is a  non-vanishing 2-form on $\C^2$, then $\omega$ is a rational 2-form, regular on $X_\text{reg}$. By definition of the Nash blow-up, $\nu^*\omega$ is a global section in $\Gamma(\nash,\sheaf)$ (above the singular point, it is defined as a limit of tangent planes). Pulling back via the map $\mu$, we get that $\pi^*\omega \in \Gamma(Y,\mu^*\sheaf)$. 

    We now prove the following isomorphism: $\mu^*\sheaf \simeq \tensor$. Since $\nu^*\sheaf \longrightarrow \sheaf$ is surjective, then $\pi^*\Omega_X^2 \longrightarrow \mu^*\sheaf$ is also surjective. By the definition \ref{def:mather disc}, $\pi^*\Omega_X^2 \twoheadrightarrow \text{Im}(\pi^*\Omega_X^2 \longrightarrow \Omega_Y^2) = \tensor$. By universality of the Nash blow-up, we can conclude that $\mu^*\sheaf \simeq \tensor$. 

    Finally, the 2-form $\pi^*\omega$ is a global section of the invertible sheaf $\tensor$, hence $\text{div}(\pi^*\omega)=\hat{K}_{Y/X}$.
\end{proof}

\vspace{\baselineskip}

\begin{proposition} \label{disc mather}
Let $\pi : Y \longrightarrow X$ be a good resolution of the isolated singularity of $X$, that factors through the Nash transform. Then $q_i$ the inner rate associated to an irreducible component $E_i$ of the exceptional divisor $\pi^{-1}(0)$ is equal to the Mather log-discrepancy normalized by the multiplicity, minus one. In other words, $q_i = \frac{{\hat{k}_i}^\text{log}}{m_i}-1$.
\end{proposition}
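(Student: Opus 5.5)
We relate both sides to the generic projection $l:(X,0)\to(\C^2,0)$, using Lemma \ref{lem: ord=mather} for the Mather side and Lemma \ref{lem : inner rate} together with Lemma \ref{lem: disc smooth} for the inner rate side. The guiding idea is that for a generic projection the inner rate on $X$ equals the inner rate on $\C^2$ of the image curves, and the latter is computed by log discrepancies on $\C^2$.

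\textbf{Step 1 (reduction to a map between smooth surfaces).} Since both sides of the formula are invariant under further blowups of points by Lemma \ref{calcul taux} (we would check that $\hat{k}^\text{log}$ obeys the same rules as log discrepancies under blowups of $Y$, which holds because $Y$ is smooth), we may enlarge $\pi$. Choose a sequence of point blowups $\sigma:Z\to\C^2$ and a good resolution $\pi:Y\to X$ such that $l\circ\pi$ lifts to a morphism $\tilde l:Y\to Z$ in which each $E_i$ maps either onto an exceptional curve $F_j$ of $\sigma$ or to a point. We want to restrict to the first case: components contracted by $\tilde l$ should be avoidable after further blowing up $Z$. This is essentially Cherik's framework \cite{cherik_inner_2022} for finite morphisms.

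\textbf{Step 2 (comparing multiplicities and forms).} Suppose $E_i$ maps onto $F_j$ with ramification index $r_i$ along $E_i$, so that $\tilde l^*F_j=r_iE_i+\cdots$ near a generic point of $E_i$. Two consequences follow.

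For the multiplicities: because $l$ is generic, $l^*\mathfrak{M}_{\C^2}$ generates a reduction of $\mathfrak{M}_X$. Hence $m_i=\text{ord}_{E_i}(\mathfrak M_X)=r_i\,\text{ord}_{F_j}(\mathfrak{M}_{\C^2})=r_i m_{F_j}$.

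For the forms: we have $\pi^*\omega=\tilde l^*\sigma^*(dx\wedge dy)$, and $\sigma^*(dx\wedge dy)$ vanishes to order $k_{F_j}$ along $F_j$. At a generic point of $E_i$, in local coordinates $u\mapsto u^{r_i}$ transverse to $E_i$, the pullback of $dx\wedge dy$ picks up the order $r_ik_{F_j}+(r_i-1)$. Lemma \ref{lem: ord=mather} then gives $\hat{k}_i=r_ik_{F_j}+r_i-1$, that is,
$$\hat{k}_i^\text{log}=r_i\,k^\text{log}_{F_j}.$$

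\textbf{Step 3 (comparing inner rates).} Curvettes of $E_i$ at generic distinct points map under $\tilde l$ to curves through distinct generic points of $F_j$. Their images in $\C^2$ are of the form $\sigma(\delta)$ with $\delta$ a curvette of $F_j$, possibly of higher order, but the inner contact is still governed by $F_j$. Lemma \ref{lem : inner rate} gives $q_i=q_{F_j}$, and Lemma \ref{lem: disc smooth} gives $q_{F_j}=k^\text{log}_{F_j}/m_{F_j}-1$. Combining with Step 2,
$$q_i=\frac{r_ik^\text{log}_{F_j}}{r_im_{F_j}}-1=\frac{\hat{k}_i^\text{log}}{m_i}-1.$$

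\textbf{Main obstacle.} The hardest point is Step 3 together with the reduction in Step 1. We must justify that the images of curvettes of $E_i$ have inner contact equal to the inner rate of $F_j$, and that every $E_i$ can be treated this way, including components contracted by $\tilde l$, such as those meeting the strict transform of the polar curve. For the contracted components, the first approach is to blow up $Z$ further at the image points until $\tilde l$ becomes non-contracting on the relevant component. Failing that, we would exploit the linearity of both $\mathcal{I}_X$ and $\hat{k}^\text{log}/m$ along the edges of $\Gamma_\pi$: Lemma \ref{calcul taux} and the blowup formula for discrepancies give the same interpolation rules, so it suffices to check the equality on a set of vertices from which the others are reached by blowups.
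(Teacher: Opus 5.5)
Your argument is correct, but it takes a different route from the paper.

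The paper treats the inner-rate side in one line. It quotes Cherik's computation of the divisor of the pulled-back form, $\text{ord}_{E_i}(\pi^*\omega)=m_i(q_i+1)-1$, read off from the local form of $l\circ\pi$ at a generic point of $E_i$. It then identifies this order with $\hat k_i$ via the lemma on $\text{ord}_{E_i}(\pi^*\omega)$.

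You use that same lemma for the Mather side, but replace Cherik's formula as follows. You factor $l\circ\pi$ through a modification $\sigma\colon Z\to\C^2$ on which $E_i$ maps onto a curve $F_j$ with ramification $r_i$. Then $r_i$ multiplies both the multiplicity and the log discrepancy ($m_i=r_im_{F_j}$, $\hat k_i^{\text{log}}=r_ik_{F_j}^{\text{log}}$) and cancels in the ratio. Meanwhile $q_i=q_{F_j}$ by the generic-projection statement in the inner-rate lemma, so everything reduces to the smooth lemma.

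What this buys is a self-contained explanation of the formula as the pullback of the smooth one, without Cherik's input. What it costs is twofold. First, the smooth lemma must hold for arbitrary multiplicities. The induction written in the paper only treats multiplicity-one steps, but it goes through verbatim with $k_w=k_v+1$, $m_w=m_v$, resp.\ $k_w=k_v+k_{v'}+1$, $m_w=m_v+m_{v'}$. Second, you need a valuative fact for Step 1.

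That fact is the one thing you leave unjustified. Blowing up $Z$ at image points terminates because the restriction of $v_{E_i}$ to $\C(x,y)$, along the finite extension $\C(X)/\C(x,y)$, is again divisorial. Indeed, the residue field of $v_{E_i}$ is finite over that of its restriction, so transcendence degree $1$ is preserved. Hence finitely many point blowups make its center a curve $F_j$. Resolving $\sigma^{-1}\circ l\circ\pi$ by point blowups of $Y$ then leaves the generic point of $E_i$ untouched, and its strict transform maps onto $F_j$.

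Since $m_i$, $q_i$ and $\hat k_i$ depend only on $v_{E_i}$ (for $\hat k_i$ use $\hat K_{Y'/X}=\mu^*\hat K_{Y/X}+K_{Y'/Y}$), this handles every component. There are no contracted components to worry about, and components meeting the polar curve play no special role. The fallback via linearity along edges is therefore not needed; on its own it would only propagate the identity from a base resolution, not prove it there.

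Two minor points:
\begin{itemize}
\item Fix $l$ generic with respect to the original $\pi$ before building $Z$ and $Y'$, which depend on $l$. Then apply the order lemma and the inner-rate lemma on $Y$.
\item The worry in Step 3 disappears once you note that $E_i\to F_j$ is generically \'etale. So $\tilde l$ has the form $(u,v)\mapsto(u^{r_i},v)$ at a generic point of $E_i$, and the curvette $\{v=c\}$ maps onto the curvette $\{t=c\}$ of $F_j$.
\end{itemize}
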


\vspace{\baselineskip}

\begin{proof}
    We recall elements of the proof of the Proposition 3.2 of \cite{cherik_inner_2022}. Let $E_i$ be an irreducible component of $D=\pi^{-1}(0)$, and $l=(l_1,l_2) : X \longrightarrow \C^2$ the generic projection.
    Set $\omega :=l^*dx\wedge dy$. We consider $p$ a smooth point of $D$ in $E_i$, with $p \notin l_1^*\cup l_2^* \cup \Pi_l^*$, where $l_1^*,l_2^*$ and $\Pi_l^*$ are the strict transform of $l_1,l_2$ and the polar curve $\Pi_l$. The divisor $K_{\pi^*\omega}$ associated to the meromorphic 2-form is represented by:
    $$
    K_{\pi^*\omega} = \sum_{i \in V(\Gamma_\pi)}(q_i(m_i+1)-1)E_i + \Pi_l^*.
    $$
    In particular, $\text{ord}_{E_i}(\pi^*\omega)=m_i(q_i+1)-1$. By the previous proposition, we have that $\hat{k}_i=m_i(q_i+1)-1$. Hence, $\hat{k}_i^\text{log}=m_i(q_i+1)$.
\end{proof}

\vspace{\baselineskip}

\begin{remark}
Lemmas \ref{lem: disc smooth} and \ref{disc mather} are proving the remark 3.11 of \cite{silva_inner_2022}.
\end{remark}

\section{Properties of the motivic local density}\label{section : properties}

Before proving the main theorem of this paper, we recall the definition of the motivic local density, and prove some of its properties. The local density have been studied in the case of the Laurent series field $\field$, and more generally in the case of discrete valued Henselian fields of characteristic zero by A. Forey in \cite{forey_motivic_2017}. 

\vspace{\baselineskip}

\begin{definition}
    Say that a function $h : \mathbb{N} \longrightarrow \mathcal{M}_k$ has a mean value at infinity if there exists an integer $e > 0$ such that 
    $$
    \lim\limits_{\overset{n \longrightarrow \infty}{n \equiv c \text{ mod } e}} h(n)
    $$
    exists in $\R$ for each $c=0,\dots,e-1$ and in this case define the mean value at infinity of $h$ as the average 
    $$
    \text{MV}_\infty(h):=\frac{1}{e} \sum_{i=0}^{e-1} \lim\limits_{\overset{n \longrightarrow \infty}{n \equiv c \text{ mod } e}} h(n).
    $$
    The mean value is independent of the choice of the modulus $e>0$. We recall that the topology on $\mathcal{M}_k$ is induced by the $\LL-$degree.
\end{definition}

\vspace{\baselineskip}

Consider $D \in h[m,0,0]$ a definable set of dimension $d$. Consider $x$ a point of $D$ and set $\theta_d(D,x)(n):= \frac{\mu_d(D \cap B_m(x,n))}{\mu_d(B_d(x,n))} \in \mathcal{M}_k$. 

\vspace{\baselineskip}

\begin{definition}
    The function $\theta_d(D,x)$ admits a mean value at infinity, and we define the local density of the set $D$ at the point $x$ by: 
    $$
    \Theta_d(D,x):= \text{MV}_\infty(\theta_d(D,x)) \in \Q \otimes \mathcal{M}_k.
    $$
    (See Proposition-Definition 3.6 of \cite{forey_motivic_2017} for more details.)
\end{definition}

\vspace{\baselineskip}

With the properties of the motivic measure, we trivially have the following proposition: 

\vspace{\baselineskip}

\begin{proposition}
    Consider $D \in h[m,0,0]$ a definable set of dimension $d$, $U \subset D$ a subset of $D$ of dimension $<d$ and $x$ a point of $D$. Hence
    $$
    \Theta_d(D,x) = \Theta_d(D \smallsetminus U,x).
    $$
\end{proposition}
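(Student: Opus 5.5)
The plan is to reduce the statement to additivity of the motivic measure, plus a dimension bound on $U$. For every $n$ we have the disjoint decomposition
$$
D\cap B_m(x,n) = \big((D\smallsetminus U)\cap B_m(x,n)\big)\sqcup \big(U\cap B_m(x,n)\big).
$$
Both pieces are definable, since balls are definable in the Denef--Pas language. The measure $\mu_d$ of Cluckers--Loeser is additive on definable subassignments of dimension at most $d$. Hence
$$
\mu_d(D\cap B_m(x,n)) = \mu_d((D\smallsetminus U)\cap B_m(x,n)) + \mu_d(U\cap B_m(x,n)).
$$

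The key step is to show that $\mu_d(U\cap B_m(x,n))=0$. By construction in \cite{cluckers_constructible_2008}, the $d$-dimensional measure $\mu_d$ vanishes on definable sets of dimension $<d$. Concretely, one projects to $d$ coordinates, the image has dimension $<d$, and a set of dimension $<d$ in $h[d,0,0]$ has measure zero for the $d$-dimensional Haar-type motivic measure. Here $U\cap B_m(x,n)\subset U$ has dimension $<d$. This holds because the dimension in the sense of the definition above is monotone under inclusion, as images of subsets under the projection are subsets of images.

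Dividing by $\mu_d(B_d(x,n))$, which is a power of $\LL$ and so invertible in $\mathcal{M}_k$, gives
$$
\theta_d(D,x)(n)=\theta_d(D\smallsetminus U,x)(n)\quad\text{for all } n.
$$
Identical functions have the same mean value at infinity, so $\Theta_d(D,x)=\Theta_d(D\smallsetminus U,x)$. The existence of $\Theta_d(D\smallsetminus U,x)$ follows either from the equality itself or from Forey's Proposition-Definition 3.6, since $D\smallsetminus U$ is definable of dimension $d$.

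Two points need care. First, $x$ may lie in $U$ rather than in $D\smallsetminus U$. The density is still defined for $x$ in the closure, and the argument above never uses $x\in D\smallsetminus U$, so this causes no problem. Second, the main obstacle, which is mild, is justifying that $\mu_d$ of a lower-dimensional definable set is zero. I would cite the corresponding property of the measure in \cite{cluckers_constructible_2008}, as used in \cite{forey_motivic_2017}, rather than reprove it.
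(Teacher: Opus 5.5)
Your proposal is correct and follows the paper's approach. The paper gives no written proof, only the remark that the statement follows ``trivially'' from the properties of the motivic measure: additivity, and the vanishing of $\mu_d$ on definable sets of dimension $<d$. You spell out exactly this to get $\theta_d(D,x)(n)=\theta_d(D\smallsetminus U,x)(n)$ for every $n$.

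Like the paper, you tacitly assume $U$ is definable, so that $D\smallsetminus U$ and $U\cap B_m(x,n)$ are measurable; this is the intended reading of the statement.
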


\vspace{\baselineskip}

The following proposition shows that renormalizing with balls instead of with spheres yields the same local density. This will helps to make some computations easier.

\vspace{\baselineskip}

\begin{proposition}
    Let $D \in h[m,0,0]$ be a definable set of dimension $d$ and $x$ be a point of $D$, We have the following equality:
    \begin{align*}
    \Theta_d(D,x) & = \text{MV}_\infty\Big(n\mapsto \frac{\mu_d(D\cap B_m(x,n))}{\mu_d(B_d(x,n))}\Big) \\
    &= \text{MV}_\infty\Big(n\mapsto \frac{\mu_d(D\cap S_m(x,n))}{\mu_d(S_d(x,n))}\Big).
    \end{align*}
\end{proposition}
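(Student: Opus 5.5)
Write $B(n):=B_m(x,n)$ and $S(n):=S_m(x,n)=B(n)\smallsetminus B(n+1)$, and let $a_n:=\mu_d(D\cap B(n))$ and $b_n:=\mu_d(B_d(x,n))$. The plan is to express both ratios through the sequence $a_n$ and use the explicit form of $b_n$. First I record the volumes of the model balls and spheres. Assuming $x$ is translated to the origin, we have $b_n=\LL^{-nd}$, since the ball of valuative radius $n$ in dimension $d$ is $t^n\,\C[\![t]\!]^d$. Hence
$$\mu_d(S_d(x,n))=b_n-b_{n+1}=\LL^{-nd}(1-\LL^{-d}).$$
Additivity of $\mu_d$ on the disjoint union $D\cap B(n)=(D\cap S(n))\sqcup(D\cap B(n+1))$ gives $\mu_d(D\cap S(n))=a_n-a_{n+1}$. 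Writing $\theta(n)=a_n/b_n$, we get
$$\frac{\mu_d(D\cap S(n))}{\mu_d(S_d(x,n))}=\frac{a_n-a_{n+1}}{\LL^{-nd}(1-\LL^{-d})}=\frac{\theta(n)-\LL^{-d}\theta(n+1)}{1-\LL^{-d}}.$$

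Next I take limits. By the Proposition-Definition of the local density (Forey), there is $e>0$ such that for each $c$ the subsequence $\theta(ke+c)$ converges to some $d_c$ in $\mathcal{M}_k$. Then along $n\equiv c \bmod e$ the sphere ratio converges to
$$\frac{d_c-\LL^{-d}d_{c+1}}{1-\LL^{-d}},$$
with indices read mod $e$. This uses that multiplication by fixed elements of $\mathcal{M}_k$ and addition are continuous for the $\LL$-degree topology. Continuity holds because the degree filtration is compatible with the ring operations, and $1-\LL^{-d}$ is invertible in $\mathcal{M}_k$. So the sphere ratio also has a mean value at infinity, with the same modulus $e$.

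Finally I average over $c=0,\dots,e-1$. The sum $\sum_c d_{c+1}$ is just a reindexing of $\sum_c d_c$, so the numerators add up to $(1-\LL^{-d})\sum_c d_c$, and the factor $1-\LL^{-d}$ cancels. This gives exactly $\frac1e\sum_c d_c=\Theta_d(D,x)$.

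The main point needing care is the first step: the decomposition $B(n)=S(n)\sqcup B(n+1)$ and the ball volume $\LL^{-nd}$ must hold for the $d$-dimensional measure $\mu_d$ even though $D$ lives in $h[m,0,0]$. For the model ball in $h[d,0,0]$ this is immediate. For $D$ it is just additivity of $\mu_d$ on definable sets, so I expect no real obstacle. The only other subtlety is the cyclic shift $c\mapsto c+1$ mod $e$, which the averaging handles.
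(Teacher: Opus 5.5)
Your proof is correct. The paper gives no proof of this proposition; it is stated and then used in Section 6, where the computations normalize by the sphere volume $\LL^{-2n}(1-\LL^{-2})$. So there is no paper argument to compare against. Your telescoping identity
\[
\frac{\mu_d(D\cap S_m(x,n))}{\mu_d(S_d(x,n))}=\frac{\theta(n)-\LL^{-d}\theta(n+1)}{1-\LL^{-d}},
\]
combined with Forey's cyclic convergence of $\theta$, supplies the missing argument. It also shows that the sphere ratio has a mean value at infinity with the same period $e$, which the statement tacitly assumes.

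Two points deserve a word. First, the continuity you invoke for multiplication by $(1-\LL^{-d})^{-1}$ depends on a convention: the degree filtration on the localized ring $\mathcal{M}_k$ must assign the inverted elements $(1-\LL^{-i})^{-1}$ degree $0$. With that convention the filtration is multiplicative and your argument goes through.

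Second, the cyclic shift $c\mapsto c+1$ is essential, not cosmetic, because the individual subsequential limits of the two sequences differ in general. For the cusp $x^2=y^3$ (so $d=1$, $e=2$), the ball ratios are $\frac{1}{1+\LL^{-1}}$ for even $n$ and $\frac{\LL^{-1}}{1+\LL^{-1}}$ for odd $n$. The sphere ratios are $1$ and $0$. Only the averages coincide, both equal to $\frac12$, and this is exactly what your reindexing $\sum_c d_{c+1}=\sum_c d_c$ captures.
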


\vspace{\baselineskip}

We recall the following proposition, providing a formula for the motivic local density of a curve. It was proved by A. Forey in \cite{forey_motivic_2017}, and we will give an other proof of this proposition using a resolution of singularity, similarly as what we will do for the case of surfaces.

\vspace{\baselineskip}

\begin{proposition}[\cite{forey:tel-01871909}] \label{prop : courbes}
    Let $f \in \C[x,y]$ be a power series without square factor. Let $f=f_1\dots f_r$ the decomposition in $\mathbb{C}[\![x,y]\!]$ of $f$ into irreducible factors. Denote by $N_i$ the multiplicity of $f_i$ and $C$ the curve defined by $f$. Then the motivic density of $C$ at the origin is given by:
    $$
    \Theta^\text{mot}_1(C,0)=\sum_{i=1}^r \frac{1}{N_i}.
    $$
\end{proposition}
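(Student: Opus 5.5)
The plan is to compute the normalized volumes on spheres, using the sphere version of the mean value in the Proposition above. We reduce to one branch at a time and parametrize each branch by its normalization, playing the role of the resolution. The change of variables formula (Theorem \ref{thm: change var}) then turns each volume into an explicit geometric series.

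\emph{Step 1: reduction to one branch.} Let $C_i$ be the branch defined by $f_i$, viewed as a definable set near the origin. Two distinct branches $C_i$ and $C_j$ meet only at $0$, which has dimension $0<1$. So for $n$ large,
$$
\mu_1(\underline{C}\cap S_2(0,n))=\sum_i \mu_1(\underline{C_i}\cap S_2(0,n)),
$$
and the first Proposition of this section removes the lower-dimensional overlaps. We may therefore assume $f$ is irreducible of multiplicity $N$. After a linear change of coordinates we may take coordinates generic with respect to the tangent cone; the density is independent of the embedding by Proposition \ref{prop : indep embed}.

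\emph{Step 2: parametrization.} By Puiseux, the normalization is $\nu:(\C,0)\to(C,0)$, $s\mapsto (x(s),y(s))$, with $\text{ord}_s x = N$ and $\text{ord}_s y\ge N$ in these coordinates. The map $\nu$ is finite and birational, hence proper. By the valuative criterion, every arc of $C$ not contained in the origin lifts uniquely to an arc $s(t)\in\C[\![t]\!]$. An arc with $\text{ord}_t s=k\ge1$ maps to a point with
$$
\min\big(v(x(s(t))),\,v(y(s(t)))\big)=Nk .
$$
Thus $\nu^{-1}(\underline{C}\cap S_2(0,n))$ is empty unless $N\mid n$. When $n=Nk$ it equals the sphere $\{\text{ord}\, s=k\}$, whose measure is $(1-\LL^{-1})\LL^{-k}$.

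\emph{Step 3: change of variables.} We apply Theorem \ref{thm: change var} to $\nu$. The image of $\nu^*\Omega^1_C$ in $\Omega^1_{\C}$ is generated by $x'(s)\,ds$ and $y'(s)\,ds$, so $v(\nu^*\Omega^1_C)=(N-1)\,\text{ord}\,s$. It follows that
$$
\mu_1(\underline{C}\cap S_2(0,Nk))=\LL^{-(N-1)k}(1-\LL^{-1})\LL^{-k}=(1-\LL^{-1})\LL^{-Nk},
$$
while $\mu_1(S_1(0,Nk))=(1-\LL^{-1})\LL^{-Nk}$. The normalized sequence therefore equals $1$ when $N\mid n$ and $0$ otherwise. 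It is periodic with period $N$, so its mean value at infinity is $1/N$. Summing over the branches gives $\sum_i 1/N_i$.

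\emph{Main obstacle.} The delicate point is the rigorous use of the change of variables and of the arc lifting in the Cluckers--Loeser definable setting. One must check that $\nu$ restricted to $\underline{\C}$ near $0$ is a bijection onto $\underline{C}\smallsetminus\{0\}$ up to a measure-zero set, and that the jacobian order is exactly $(N-1)\,\text{ord}\,s$, which uses the genericity $\text{ord}\, x=N$. A related subtlety is making sure the sphere is measured with the ambient max-valuation. I would handle both by working with the explicit Puiseux series and invoking Lemma 3.3 of \cite{denef_germs_1999} for the jacobian.
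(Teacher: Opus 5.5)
Your proof is correct and follows essentially the same route as the paper. Both arguments use the normalization as the resolution, lift arcs through $0$ to arcs through the points $p_i$, and apply the change of variables formula with Jacobian order $(N_i-1)\,\mathrm{ord}\,s$. In both, the normalized sphere volume is $1$ when $N_i\mid n$ and $0$ otherwise, which gives mean value $1/N_i$ per branch. The only cosmetic difference is in how the Jacobian order is obtained: you read it off from the ideal generated by $x'(s)$ and $y'(s)$, whereas the paper computes the divisor of the pulled-back generic-projection form $l^*dx$. These agree, and your version does not even need generic coordinates, since $\min(\mathrm{ord}\,x',\mathrm{ord}\,y')=N-1$ holds in any linear coordinates.
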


\vspace{\baselineskip}

We will prove this proposition in the next section (see Proposition \ref{prop : curve section 6}), since we use a lot of tools defined then. 

\vspace{\baselineskip}

With the definitions above, we can define the local density for a definable subset of $\field^m$. In the context of our main theorem, we consider $X$ a surface over $\C$. Up to the choice of an embedding in the affine space, we can consider $\underline{X}$ the definable set associated to the $\C[\![t]\!]-$points of $X$ and compute its motivic local density. The Proposition \ref{prop : indep embed} shows that the local density is independent of the choice of the embedding.

\vspace{\baselineskip}

Let $h$ be a definable subassignment of $h[m,n,r]$ of dimension $d$. We denote by $x_1,\dots,x_m$ the coordinates on $\mathbb{A}^m_{k}$ and we consider the $d$-forms $\omega_I := dx_{i_1} \wedge \dots \wedge dx_{i_d}$ for $I=\{i_1,\dots,i_d\} \subset \{1,\dots,m\}$, $i_1<\dots<i_d$. We denote by $|\omega_I|_h$ the image of $\omega_I$ in $|\tilde{\Omega}|_+(h)$. We recall the following definitions from \cite{cluckers_constructible_2008}.

\vspace{\baselineskip}

\begin{definition}[8.3.1 of \cite{cluckers_constructible_2008}]
    There is a unique element $|\omega_0|_h$ in $|\tilde{\Omega}|_+(h)$, the canonical volume form, such that, for every $I$, there exists $\Z$-valued definable functions $\alpha_I$ and $\beta_I$ on $h$, with $\beta_I$ only taking as values 1 and 0, such that $\alpha_I + \beta_I >0$ on $h$, $|\omega_I|_h=\beta_I\LL^{-\alpha_I}|\omega_0|_h$ in $|\tilde{\Omega}|_+(h)$, and such that $\text{inf}_I\alpha_I=0$. We call $|\omega_0|_h$ the canonical volume form on $h$.\label{def : volume form embed}
\end{definition}

%\vspace{\baselineskip}

%\begin{definition}\label{def : volume form modele}
    %Let $\mathcal{X}^0$ be an algebraic variety over $\text{Spec}(k[\![t]\!])$, flat over $\text{Spec}(k)$. Set $\mathcal{X}:= \mathcal{X}^0 \otimes_{\text{Spec}(k)} \text{Spec}(k(\!(t)\!))$, assume $\mathcal{X}$ of dimension $d$. We denote by $U^0$ the largest open subset of $\mathcal{X}^0$ on which the sheaf ${\Omega_{\mathcal{X}^0}^d}_{|k[\![t]\!]}$ is locally free of rank 1 over $k[\![t]\!]$. Its generic fiber $U := U^0 \otimes_{\text{Spec}(k[\![t]\!])}k(\!(t)\!)$ may be identified with the smooth locus of $\mathcal{X}$ when $\mathcal{X}$ is of pure dimension $d$. We choose a finite cover of $U^0$ by open subsets $U^0_i$ on which the sheaf ${\Omega_{\mathcal{X}^0}^d}_{|k[\![t]\!]}$ is generated by a non zero form $\omega_i$ in ${\Omega_{U^0}^d}_{|k[\![t]\!]}(U_i^0)$. Each form $\omega_i$ gives rise to a volume form $|\omega_i|$ in $|\Omega|_+(h_{U_i})$, where $U_i$ denotes the generic fiber of $U_i$. The subsets $U_i$ form an open cover of $U$. There exist a unique element $|\omega_0|$ in $|\tilde{\Omega}|_+(h_\mathcal{X})$ such that $|\omega_0|_{|h_{U_i}}=|\omega_i|$ in $|\tilde{\Omega}|_+(h_{U_i})$; and only depends on the model $\mathcal{X}^0$, not on the choice of the cover by open subsets $U^0_i$. 
%\end{definition}

\vspace{\baselineskip}

\begin{proposition}\label{prop : indep embed}
    Consider $X$ a variety over $k$ of dimension $d$ with an isolated singularity $O$. The motivic local density $\Theta_d(\underline{X},O)$ is independent of the choice of an embedding in the affine space, and we write it $\Theta_d^\text{mot}(X,O)$. 
\end{proposition}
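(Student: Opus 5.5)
We want to show that, given two closed embeddings $\iota : X \hookrightarrow \mathbb{A}^m_k$ and $\iota' : X \hookrightarrow \mathbb{A}^{m'}_k$, the functions
$$
n \mapsto \frac{\mu_d(\underline{\iota(X)} \cap B_m(\iota(O),n))}{\mu_d(B_d(\iota(O),n))}, \qquad n \mapsto \frac{\mu_d(\underline{\iota'(X)} \cap B_{m'}(\iota'(O),n))}{\mu_d(B_d(\iota'(O),n))}
$$
have the same mean value at infinity. The plan is to reduce to comparing each embedding with their product. Consider $\iota'' = (\iota,\iota') : X \hookrightarrow \mathbb{A}^{m+m'}_k$. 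The projections $p : \mathbb{A}^{m+m'} \to \mathbb{A}^m$ and $p' : \mathbb{A}^{m+m'} \to \mathbb{A}^{m'}$ restrict to isomorphisms $\iota''(X) \to \iota(X)$ and $\iota''(X) \to \iota'(X)$. It therefore suffices to treat the case of two embeddings related by a polynomial isomorphism $\phi = p|_{\iota''(X)}$ whose inverse is also polynomial, namely $\iota' \circ \iota^{-1}$ extended to ambient polynomial maps.

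\textbf{Step 1: the balls correspond exactly.} Let $\phi : \underline{X_1} \to \underline{X_2}$ be induced by polynomial maps $F$ and $G$ (with coefficients in $k$) defined on the ambient affine spaces and inverse to each other on $X_1$, $X_2$. Since $F$ has coefficients in $k \subset k[\![t]\!]$ and is evaluated on $k[\![t]\!]$-points, it is $1$-Lipschitz for the non-archimedean norm:
$$
v(F(y)-F(x)) \geq v(y-x) \quad \text{for } x,y \in k[\![t]\!]^m .
$$
The same holds for $G$. Hence $\phi$ is a bijective isometry between $\underline{X_1}$ and $\underline{X_2}$, and for every $n \geq 0$
$$
\phi\bigl(\underline{X_1} \cap B_m(O_1,n)\bigr) = \underline{X_2} \cap B_{m'}(O_2,n).
$$
This is the key point: the non-archimedean local density is a $1$-bilipschitz invariant, and polynomial isomorphisms over $k$ are $1$-bilipschitz on integral points. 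Here $O$ lies at integral coordinates; if it does not, we first translate by a point of $k^m$, which is harmless.

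\textbf{Step 2: the volumes agree.} We apply the change of variables formula (Theorem~\ref{thm: change var}) to the isomorphism $\phi$, or alternatively the intrinsic description of the canonical volume form $|\omega_0|$ of Definition~\ref{def : volume form embed}. We must check that $\phi^*|\omega_0|_{X_2} = |\omega_0|_{X_1}$. By the definition, $|\omega_0|$ is characterized by $\inf_I \alpha_I = 0$ over the coordinate $d$-forms $\omega_I$. Each $d$-form $dy_J$ on $X_2$ pulls back to a combination $\sum_I c_{IJ}\,dx_I$ with $c_{IJ}$ given by minors of the Jacobian of $F$, which are polynomials with $k$-coefficients and hence have nonnegative valuation on integral points. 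Thus
$$
\min_J \operatorname{ord}(\phi^* dy_J) \geq \min_I \operatorname{ord}(dx_I),
$$
and applying the same argument to $G$ yields the reverse inequality. So the normalizing functions agree, the Jacobian order in the change of variables vanishes, and $\mu_d$ of corresponding sets coincide. This step is where I expect the main technical care: it requires matching the abstract element $|\omega_0|_h \in |\tilde{\Omega}|_+(h)$ across the definable bijection and handling the singular point, which is a measure-zero (lower-dimensional) set and can be removed by the earlier proposition on subsets of dimension $<d$.

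\textbf{Step 3: conclusion.} The denominators $\mu_d(B_d(\cdot,n))$ depend only on $d$ and $n$. The sequences $\theta_n$ for $\iota$ and $\iota''$ are therefore equal termwise, and likewise for $\iota'$ and $\iota''$. Hence their mean values at infinity coincide, which gives independence of the embedding.
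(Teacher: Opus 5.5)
Your proof is correct. Its skeleton matches the paper's: compare each embedding with the diagonal embedding $(\iota,\iota')$, then check separately that the balls and the canonical volume forms correspond. Both of these checks, however, are justified differently.

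For the balls, the paper argues intrinsically. It identifies $\underline{X}\cap B(O,n)$ with the arcs in $\mathcal{L}(X)$ whose $(n-1)$-jet is the constant jet at $O$, a description that involves no embedding at all. You instead show that the transition map is an isometry on $k[\![t]\!]$-points, because it and its inverse are restrictions of polynomial maps with coefficients in $k$. This is the same mechanism behind the paper's later 1-bilipschitz invariance proposition.

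For the volume forms, the paper only asserts that $\phi=\pi_2\circ\pi_1^{-1}$ has Jacobian determinant equal to $1$, without saying why. Your argument supplies the actual reason: the $d\times d$ minors of the Jacobians of $F$ and $G$ are integral at integral points. Hence $\min_I \text{ord}(dx_I)$ is preserved in both directions, which is exactly the normalization $\inf_I \alpha_I = 0$ that defines $|\omega_0|$.

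So the paper's route makes the ball step conceptually immediate but leaves the volume-form step essentially unjustified. Yours treats both steps with one elementary principle, integrality of polynomial maps over $k$ on integral points, and thereby fills the gap. One small remark: your translation caveat is unnecessary, since $O$ is a $k$-point and its coordinates are already integral.
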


\vspace{\baselineskip}

\begin{proof}
     We first prove that $\underline{X}\cap B(O,n)$ is independent of the embedding in the affine space. Consider $\mathcal{L}(X)$ the scheme of germs of arcs on $X$ (see \cite{denef_germs_1999} for more details about arc spaces). We denote by $\pi_n : \mathcal{L}(X) \longrightarrow \mathcal{L}_n(X)$ the canonical morphism corresponding to truncation of arcs. Hence, we can write $\underline{X}\cap B(O,n)$ as $\pi_{n-1}^{-1}(\mathcal{L}_{n-1}(X)_0) \subset \mathcal{L}(X)$, where $\mathcal{L}_{n-1}(X)_0$ are the $(n-1)-$jets that specialize to $O$. Clearly this do not depend of the choice of an embedding. It remains to prove the independence of the volume forms. Let $i_1 : \underline{X} \longrightarrow h[N,0,0]$ and $i_2 : \underline{X} \longrightarrow h[N',0,0]$ be two embeddings. Consider the diagonal embedding $i_3=(i_1,i_2) : \underline{X} \longrightarrow h[N+N',0,0]$. We get the canonical volume forms $|\omega_0|_{i_1(\underline{X})}, |\omega_0|_{i_2(\underline{X})}, |\omega_0|_{i_3(\underline{X})}$. Let $\pi_1 : i_3(\underline{X}) \longrightarrow i_1(\underline{X})$, $\pi_2 : i_3(\underline{X}) \longrightarrow i_2(\underline{X})$ be the coordinates projections. Hence, $i_3(\underline{X})$ is the graph of the definable bijection $\phi = \pi_2 \circ \pi_1^{-1}$, whose Jacobian determinant is equal to 1. Hence, $(\pi_2\circ\pi_1^{-1})^*|\omega_0|_{i_2(\underline{X})}=|\omega_0|_{i_1(\underline{X})}$, so the motivic volume is independent of the choice of an embedding. 
     
     %Hence, the volume form $\omega$ is the canonical volume form on $X(\C(\!(t)\!))$. Since the definitions \ref{def : volume form embed} and \ref{def : volume form modele} coincide (where $\mathcal{X}^0$ is $X$) and that the volume form in the definition \ref{def : volume form modele} is independent of the embedding, hence so is $\omega$.
\end{proof}

\vspace{\baselineskip}

\begin{lemma}
    For $X$ a surface over $k$ with an isolated singularity $O$, the canonical volume form may be expressed as $l^*dx\wedge dy$ for $l : X \longrightarrow k^2$ a generic projection.
\end{lemma}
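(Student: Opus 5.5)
We have to show that, for $X\subset \mathbb{A}^N_k$ a surface with isolated singularity $O$ and $l=(l_1,l_2)$ a generic linear projection, the canonical volume form $|\omega_0|_{\underline{X}}$ of Definition \ref{def : volume form embed} equals $|l^*dx\wedge dy|$. In that definition the canonical form is characterised by the normalisation $\inf_I \alpha_I = 0$, where $|\omega_I| = \beta_I\LL^{-\alpha_I}|\omega_0|$. The plan is therefore to compare each coordinate form $\omega_I=dx_{i}\wedge dx_{j}$ with $\omega_l:=l^*dx\wedge dy$, pointwise on arcs, and to show that $\omega_l$ attains the minimal order.

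\textbf{Step 1 (reduction to generic points).} Since $O$ is isolated, the definable set of arcs passing through the singular locus has dimension $<2$, so by the proposition on removing lower-dimensional sets it can be discarded. On the remaining arcs $\gamma$ we work in $\wedge^2\Omega_X$ restricted to $X_{\text{reg}}$, which is invertible. Every $\omega_I$ and $\omega_l$ then becomes a multiple of a local generator $\sigma$:
\[
\omega_I = g_I\,\sigma,\qquad \omega_l = g_l\,\sigma .
\]
Hence $\alpha_I(\gamma) = v(g_I(\gamma)) - \min_J v(g_J(\gamma))$, and $|\omega_0|$ is $|\sigma|$ rescaled by $\LL^{\min_J v(g_J)}$.

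\textbf{Step 2 (linear algebra of 2-forms).} Writing $l_1=\sum a_i x_i$ and $l_2=\sum b_j x_j$, we get
\[
\omega_l = \sum_{i<j} (a_ib_j-a_jb_i)\,\omega_{ij}.
\]
The ultrametric inequality then gives $v(g_l(\gamma))\ge \min_I v(g_I(\gamma))$. Equality must be proved for the generic choice of $l$. Pointwise this is clear: the residue at the leading order is a nonzero linear form in the Plücker coordinates $(a_ib_j-a_jb_i)$, which is nonzero for $(a,b)$ off a proper closed set. That closed set depends on $\gamma$, however.

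\textbf{Step 3 (main obstacle: uniformity in $\gamma$).} We need one $l$ that works for all arcs simultaneously. I would handle this through the Nash blow-up and Lemma \ref{lem: ord=mather}. The minimal order $\min_I v(g_I(\gamma))$ is exactly the order along $\gamma$ of the ideal generated by the image of $\pi^*\wedge^2\Omega_X\to\Omega^2_Y$, i.e. the order of $\hat K_{Y/X}$ along the lifted arc, computed on a resolution $\pi:Y\to X$ factoring through the Nash transform. On the other hand, Lemma \ref{lem: ord=mather} states that $\operatorname{div}(\pi^*\omega_l)=\hat K_{Y/X}$ for generic $l$, with no extra horizontal part near the exceptional set beyond what the generic choice avoids. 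Consequently $v(g_l(\gamma))$ equals the order of $\hat K_{Y/X}$ along $\gamma$ for all arcs, except possibly those in a set of dimension $<2$: these are the arcs through the strict transform of the polar curve, the only place where genericity is needed. Those arcs are again negligible for measures.

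\textbf{Step 4 (conclusion).} Combining Steps 1–3 gives $|\omega_0| = |\omega_l|$ up to a measure-zero set, which is sufficient for the integrals defining local density. The delicate part is Step 3, namely controlling the zeros of $\pi^*\omega_l$ off the exceptional divisor, i.e. the polar curve. If Step 3 fails at that point, the fallback is to argue directly that only finitely many limiting tangent planes (the fibres of the Nash blow-up) must be avoided by $\ker l$, and that these form a proper closed condition on $l$.
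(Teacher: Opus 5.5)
\textbf{There is a genuine gap, in Step 3.} You claim that the arcs through the strict transform $\Pi_l^*$ of the polar curve form a set of dimension $<2$. Arcs \emph{contained} in a curve are negligible, but arcs \emph{centred} on a curve are not. In local coordinates $(s,w)$ on $Y$ with $\Pi_l^*=\{s=0\}$, the set $\{v(w)=n,\ v(s)\geq 1\}$ is a cylinder of nonzero measure. Step 1 has the same conflation: the arcs through $O$ make up the whole ball on which the density is computed, and only the constant arc is negligible.

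On that set the inequality of your Step 2 is strict, because the limiting tangent plane along $\gamma$ meets $\ker l$. The paper's lemma identifying $\text{ord}(\pi^*\omega)$ with $\hat k_i$ only controls orders along exceptional components. For $l$ generic with respect to $\pi$ one has $\text{div}(\pi^*\omega_l)=\hat K_{Y/X}+\Pi_l^*$, i.e.\ $|\omega_l|=\LL^{-\text{ord}_\gamma\Pi_l^*}|\omega_0|$.

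No choice of $l$ removes this, and your fallback in Step 4 fails for the same reason. For $(X,O)$ normal and singular, the Nash blow-up is not finite over $O$: a finite birational map onto a normal variety is an isomorphism, and by Nobile's theorem the Nash blow-up is an isomorphism only over smooth points. So the limiting tangent planes at $O$ form a curve in $\mathrm{Gr}(2,N)$. That curve meets every Schubert divisor $\{P\mid P\cap\ker l\neq 0\}$, at the points where $\Pi_l^*$ meets the exceptional fibre.

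Here is a concrete case. Take $X=\{x_1x_2=x_3^2\}$ and the chart $x_1=s^2w$, $x_2=w$, $x_3=sw$ of its minimal resolution, with $E=\{w=0\}$.
\begin{itemize}
\item The pullbacks $\pi^*(dx_1\wedge dx_2)$, $\pi^*(dx_1\wedge dx_3)$, $\pi^*(dx_2\wedge dx_3)$ are $2sw$, $s^2w$, $-w$ times $ds\wedge dw$. So $|\omega_0|$ has Jacobian order $v(w)$.
\item An $l$ with Plücker coordinates $p_{ij}$ gives $\pi^*\omega_l=w\,(p_{13}s^2+2p_{12}s-p_{23})\,ds\wedge dw$.
\item This quadratic vanishes somewhere on $E\cong\PP^1$ for every $l$. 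For generic $l$ it has a root $s_1$ in this chart.
\item The arcs with $v(w)=n$ and $s(0)=s_1$ form a subset of $\pi^{-1}(S(O,n))$ of nonzero measure on which $|\omega_l|\neq|\omega_0|$.
\end{itemize}

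\textbf{Consequence and comparison with the paper.} The lemma cannot be proved as an equality of volume forms off a negligible set, because in that form it is false. The paper's own proof has the same hole. It passes from ``$\alpha_I\geq 0$ and $\alpha_I$ vanishes on a dense definable subset'' to $\alpha_I=0$, but density of the vanishing locus says nothing about measure. Also, transversality of $\ker l$ to the tangent cone $C_OX$ is not the relevant condition; what is needed is transversality to all limiting tangent planes at $O$, and that fails for every $l$.

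What your Steps 2 and 3 do establish correctly is the statement the density computation actually needs. For every arc $\gamma$ of $Y$ not contained in $\pi^{-1}(O)$, $\min_I\text{ord}_\gamma\pi^*\omega_I=\text{ord}_\gamma\hat K_{Y/X}=\sum_i\hat k_i\,\text{ord}_\gamma E_i$. So the canonical form transports to $\LL^{-\text{ord}_\gamma\hat K_{Y/X}}$ with no polar term, while $l^*dx\wedge dy$ carries the extra factor $\LL^{-\text{ord}_\gamma\Pi_l^*}$. The sound route is to work with $\hat K_{Y/X}$ directly, and to use a generic $l$ only to evaluate the integers $\hat k_i=m_i(q_i+1)-1$.
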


\vspace{\baselineskip}

\begin{proof}
    Let $l^* dx\wedge dy=|\omega_I|_{\underline{X}}$. By definition \ref{def : volume form embed}, $|\omega_I|_{\underline{X}}=\LL^{-\alpha_I}|\omega_0|_{\underline{X}}$. Since $l$ is a generic projection over $k$, it is an unramified morphism over a dense subset of $X$, and its kernel $L$ satisfies $L\cap C_O X=\{0\}$, so the projection does not crush the tangent cone $C_O X$ at the singular point. Thus, the jacobian factor cannot have negative order along points of $\underline{X}$. More precisely, $\alpha_I \geq 0$ and vanishes over a dense definable subset of $\underline{X}$, therefore $\alpha_I=0$ in the normalization defining the canonical motivic volume form.
\end{proof}

\vspace{\baselineskip}

We end this section by recalling the following definitions of \cite{cluckers_local_2012}, in order to prove that the motivic local density is a 1-bilipschitz invariant for the non-Archimedean norm.

\vspace{\baselineskip}

\begin{definition}~
    Let $x$ be an element of $k(\!(t)\!)$ and $|x|:=\rho^{v(x)}$ with $0<\rho<1$. 
    \begin{enumerate}
        \item For $\varepsilon$ a positive real number, $D$ a subset of $k(\!(t)\!)^n$ and $f : D \longrightarrow k(\!(t)\!)^m$, then $f$ is $\varepsilon$-Lipschitz if for all $x,y \in D$ one has:
        $$
        |f(x)-f(y)| \leq \varepsilon |x-y|.
        $$
        \item  A function $f : k(\!(t)\!)^n \longrightarrow k(\!(t)\!)^m$ is a bilipschitz homeomorphism if $f$ is a bijection and there exists a real constant $C \geq 1$ such that for all $x,x' \in k(\!(t)\!)^n$, one has:
        $$
        \frac{1}{C}|x-x'|\leq |f(x)-f(x')|\leq C|x-x'|.
        $$
        \item We say that a definable analytic function $f : U \longrightarrow  k(\!(t)\!)^m$ with $U$ an open definable subset of $k(\!(t)\!)^n$ is $\varepsilon$-analytic if the norm $|Df|=\text{max}_{i,j}  \lvert \frac{\partial f_j}{\partial x_i}\rvert$ of the differential of $f$ is less or equal than $\varepsilon$ at every point of $U$.
    \end{enumerate}  
\end{definition}

\vspace{\baselineskip}

\begin{proposition}
    Let $U$ be a definable open in $k(\!(t)\!)^n$ and let $f : U\longrightarrow k(\!(t)\!)^m$ be a definable analytic function which is locally $\varepsilon-$Lipschitz. Then $f$ is $\varepsilon-$analytic.
\end{proposition}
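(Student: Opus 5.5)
The plan is to bound each partial derivative $\frac{\partial f_j}{\partial x_i}$ at an arbitrary point $x\in U$ directly by difference quotients along the coordinate directions, and then pass to the limit using the ultrametric nature of the absolute value $|\cdot|=\rho^{v(\cdot)}$. Throughout, the norm on $k(\!(t)\!)^m$ is the max of the absolute values of the coordinates, as in the definition of $|Df|$ above.

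Fix $x\in U$ and indices $i,j$. Since $f$ is locally $\varepsilon$-Lipschitz, there is a definable neighborhood $V\subset U$ of $x$ on which $|f(y)-f(y')|\leq \varepsilon|y-y'|$. Since $U$ is open, for $h\in k(\!(t)\!)$ with $v(h)$ large enough the point $x+he_i$ lies in $V$, where $e_i$ is the $i$-th basis vector. For such $h$,
$$
|f_j(x+he_i)-f_j(x)|\leq |f(x+he_i)-f(x)|\leq \varepsilon |he_i| = \varepsilon|h|.
$$
Hence the difference quotient $Q(h):=\frac{f_j(x+he_i)-f_j(x)}{h}$ satisfies $|Q(h)|\leq \varepsilon$ for all such $h$.

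Next I use analyticity to compare $Q(h)$ with the derivative. Near $x$, $f_j$ is given by a convergent power series. Therefore $f_j(x+he_i)=f_j(x)+h\,\frac{\partial f_j}{\partial x_i}(x)+h^2g(h)$, where $g$ is bounded, say $|g(h)|\leq M$, for $v(h)$ large. This gives $|Q(h)-\frac{\partial f_j}{\partial x_i}(x)|\leq M|h|$, which tends to $0$ as $v(h)\to\infty$.

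Now suppose, for contradiction, that $\lvert\frac{\partial f_j}{\partial x_i}(x)\rvert>\varepsilon$. Choose $h$ with $M|h|<\lvert\frac{\partial f_j}{\partial x_i}(x)\rvert$. By the strict ultrametric inequality, $|Q(h)|=\lvert\frac{\partial f_j}{\partial x_i}(x)\rvert>\varepsilon$, contradicting the bound on difference quotients. Equivalently, the absolute value is continuous, so the bound $\leq\varepsilon$ passes to the limit. Taking the max over $i,j$ yields $|Df(x)|\leq\varepsilon$ at every $x\in U$, which is the definition of $f$ being $\varepsilon$-analytic.

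The only step needing care is the second one: justifying that for a definable analytic function the partial derivative really is the limit of the difference quotients, with a remainder bound $O(|h|^2)$, in the valuation topology. I would take this from the local power series expansion defining analyticity in the sense of \cite{cluckers_local_2012}. If only a weaker notion of analyticity is available, I would instead use that $f$ is strictly differentiable at $x$, which already suffices for the limit argument above.
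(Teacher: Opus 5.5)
Your argument is correct and is essentially the paper's proof: both expand $f_j$ as a convergent power series near the point and use the ultrametric inequality to see that, for small increments, the linear term $h\,\frac{\partial f_j}{\partial x_i}(x)$ dominates the higher-order terms, which contradicts the $\varepsilon$-Lipschitz bound. Your version is slightly more careful: you restrict to increments $he_i$ along the relevant coordinate direction, whereas the paper asserts $|f(x)-f(0)|>|x|$ for all sufficiently small $x$, which only holds in that direction; you also avoid the normalization $\varepsilon=1$.
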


\vspace{\baselineskip}

\begin{proof}
    Suppose by contradiction that $|Df| > \varepsilon$ at $x \in U$. Without loss of generality, we can choose suppose $\varepsilon =1$, $x=0$ and choose as a neighborhood of $x$ in $U$ the unit ball such that the component functions $f_i$ of $f$ are given by converging power series on it. By assumption, we have for some $i,j$ that $\lvert\frac{\partial f_j(x)}{x_i}(0) \rvert>1$, hence the linear term of $f_j$ in $x_i$ has a coefficient of norm strictly bigger than 1. By the convergence of the power series, the coefficients of the $f_j$ are bounded in norm, say by $C$. Hence, for any $x$ in $ \{x \in B(0,0) \mid |x| < \frac{1}{C}\}$, we have that $|f(x)-f(0)|>|x|$ which contradicts the fact $f$ is $\varepsilon$-Lipschitz with $\varepsilon=1$.
\end{proof}

\vspace{\baselineskip}

\begin{proposition}
    The motivic local density is a 1-bilipschitz invariant for the non-Archimedean norm.
\end{proposition}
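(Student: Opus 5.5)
The plan is to show that a definable bijection $f : D \longrightarrow D'$ between definable sets of dimension $d$ in $k(\!(t)\!)^m$ with $f(x)=x'$, which is bilipschitz with constant $1$ (so $|f(y)-f(z)|=|y-z|$ for all $y,z$), satisfies $\theta_d(D,x)(n)=\theta_d(D',x')(n)$ for every $n$ large enough. Taking mean values at infinity then gives $\Theta_d(D,x)=\Theta_d(D',x')$.

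\textbf{Step 1: balls are preserved.} Since $v(f(y)-x')=v(f(y)-f(x))=v(y-x)$, we get $f(D\cap B_m(x,n))=D'\cap B_m(x',n)$ for all $n$. The denominators $\mu_d(B_d(x,n))$ and $\mu_d(B_d(x',n))$ agree, since both equal $\LL^{-dn}$. So it suffices to prove that $f$ preserves the $d$-dimensional motivic measure on $D$.

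\textbf{Step 2: reduce to the Jacobian.} By cell decomposition and piecewise analyticity of definable functions, and after discarding a subset of dimension $<d$ (allowed by the earlier proposition), I would partition $D$ into finitely many definable pieces $A$ with the following properties. On each $A$ some coordinate projection $p_I$ onto $k(\!(t)\!)^d$ is injective with open image. Both $p_I|_A$ and its inverse are analytic. The canonical volume form is $|\omega_0|=\LL^{-\alpha_I}|\omega_I|$ there. The same is done for the images $f(A)$, using a coordinate projection $p_J$. In these charts the measure $\mu_d$ on $A$ is the pullback of Haar-type motivic measure on $p_I(A)$, twisted by $\LL^{-\alpha_I}$. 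The map $f$ then reads as $g=p_J\circ f\circ (p_I|_A)^{-1}$, and by the change of variables formula (Theorem~\ref{thm: change var} in the Cluckers--Loeser form) we get $\mu_d(f(A))=\int_A \LL^{-\operatorname{ord}\,\mathrm{Jac}_{\mathrm{can}}(f)}|\omega_0|$. Here $\mathrm{Jac}_{\mathrm{can}}$ denotes the Jacobian of $f$ computed with respect to the canonical forms. It remains to show that this order is $0$ almost everywhere.

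\textbf{Step 3: the main obstacle.} I expect the hardest step to be showing that the canonical Jacobian has valuation $0$. I would first try the preceding proposition. Since $f$ is locally $1$-Lipschitz and analytic on the pieces, it is $1$-analytic, so $|Df|\le 1$. The same holds for $f^{-1}$, so $|D(f^{-1})|\le 1$. For the chart maps, the normalization $\inf_I\alpha_I=0$ in Definition~\ref{def : volume form embed} says that the canonical form is the largest $|\omega_I|$ pointwise. It therefore measures the $d$-dimensional volume distortion of the tangent space under the best coordinate projection, a quantity defined only through the ultrametric norm on $k(\!(t)\!)^m$. The differential $Df$, restricted to tangent spaces, satisfies $|Df|\le1$ and $|Df^{-1}|\le1$, so it is an isometry of normed $d$-dimensional subspaces. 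Hence its $d$-th exterior power sends the norm-maximal minor to a norm-maximal minor, and the valuation of the determinant relative to the canonical forms is $0$. This gives $\mu_d(f(A))=\mu_d(A)$ for every piece, and summing over the pieces yields equality of the $\theta$'s. The delicate points are making the ``isometry of tangent spaces'' statement precise, via a lattice argument in $k[\![t]\!]^m$, and checking that analyticity holds on a cell decomposition compatible with the balls.
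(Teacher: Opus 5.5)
Your proposal is correct and follows essentially the same route as the paper. Both show that $f$ maps $D\cap B_m(x,n)$ onto $D'\cap B_m(x',n)$, apply the change of variables formula, and use that $f$ and $f^{-1}$ are both $1$-Lipschitz (hence $1$-analytic) to force the Jacobian to have valuation $0$; your Step~3 additionally computes the Jacobian relative to the canonical volume forms, via an isometry of tangent spaces sending $T\cap k[\![t]\!]^m$ onto $T'\cap k[\![t]\!]^m$, which fills in a point the paper skips by treating $\det(Df)$ as if $f$ were a map between open subsets of $k(\!(t)\!)^d$.
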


\vspace{\baselineskip}

\begin{proof}
    Let $f : \underline{X} \longrightarrow \underline{Y}$ be a 1-bilipschitz definable bijection, and let $d=\dim(\underline{X})=\dim(\underline{Y})$. Hence for all $x,x' \in \underline{X}$, we have $|x-x'|=|f(x)-f(x')|$. For $y=f(x)$, we have that $\underline{Y}\cap B(y,n)=f(B(x,n)\cap \underline{X})$, hence $\mu_d(\underline{Y}\cap B(y,n))=\mu_d(f(B(x,n)\cap \underline{X}))$. It remains to prove that $\mu_d(f(B(x,n)\cap \underline{X}))=\mu_d(\underline{X}\cap B(x,n))$. Since $f$ is a bijection, we can apply the theorem of change of variables:
    $$
    \mu_d(f(B(x,n)\cap \underline{X}))=\int_{\underline{X}\cap B(x,n)} \LL^{-v(\text{Jac}(f))}.
    $$

    Since $f$ is 1-Lipschitz, hence it is 1-analytic and $|Df| \leq 1$, and $f^{-1}$ is 1-Lipschitz, hence we conclude that $|Df|=1$. We finally get that $\text{det}(Df), \text{det}(Df^{-1}) \in \mathcal{O}_{k(\!(t)\!)}$, so $\text{det}(Df) \in \mathcal{O}_{k(\!(t)\!)}^\times$ and $v(\text{det}(f))=0$.

    Finally, $\mu_d(f(B(x,n)\cap \underline{X}))=\mu_d(B(x,n)\cap \underline{X})$, and then $\Theta^\text{mot}_d(X,x)=\Theta^\text{mot}_d(Y,y)$. 
\end{proof}

\section{Proof of the formula for surfaces}

    In this section, we prove Theorem \ref{thm : formula surface}. We recall the following: let $\pi : Y \longrightarrow X$ be a resolution of the singularity that factors through the blowup of the maximal ideal and the Nash transform of $(X,O)$, and such that the exceptional locus is simple normal crossing. We denote by $E_i$ the irreducible component of the exceptional divisor $D=\pi^{-1}(0)$. Let $l:(X,O) \longrightarrow (\C^2,0)$ be a generic projection. We also require that $\Pi_l^* \cap E_j = \emptyset$ for components $E_j$ with associated inner rate equal to 1, where $\Pi_l^*$ denotes the strict transform of the polar curve $\Pi_l$.  

\vspace{\baselineskip}

\begin{theorem}
    Let $X$ be an algebraic surface (i.e. a 2-dimensional separated scheme of finite type) over $\C$ with a unique isolated singularity $O$. Let $\pi : Y \longrightarrow X$ be a good resolution of the singularity that satisfies the previous conditions, . Denote $E_i$ the irreducible components of the exceptional locus, $m_i$ its multiplicity and $q_i$ its associated inner rate. Set $E_i^0=E_i \smallsetminus \cup_{j \neq i} E_j$.
    The motivic local density of $X$ at the point $O$ is given by:
    $$
    \Theta^\text{mot}_2(X,O) = \sum_{i\mid q_i=1} \frac{1}{m_i} \Bigg(\frac{1}{\LL+1}[E_i^0] + \sum_{j \mid E_i\cap E_j \neq \emptyset }\frac{\LL^{-(q_{j}-1)m_{j}}(1-\LL^{-1})}{(1-\LL^{-(q_{j}-1)m_{j}})(1+\LL^{-1})} \Bigg),
    $$
    which lies in $\mathcal{M}_\C \otimes \Q$.
\end{theorem}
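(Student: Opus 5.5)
We compute $\mu_2(\underline{X}\cap B(O,n))$ by pulling it back along $\pi$ with the change of variables formula (Theorem \ref{thm: change var}), using $|\omega_0|=|l^*dx\wedge dy|$ as the canonical volume form (previous lemma). By Lemma \ref{lem: ord=mather} and Proposition \ref{disc mather}, the Jacobian order along $E_i$ is $\hat k_i=m_i(q_i+1)-1$. We then get
$$
\mu_2(\underline{X}\cap B(O,n))=\int_{\{y\in \underline{Y}\,:\, \mathrm{ord}\, \pi^*\mathfrak{M}(y)\geq n\}}\LL^{-v(\pi^*\omega)}|\omega_Y|.
$$
Since $\mu_2(B_2(0,n))=\LL^{-2n}$, we have to extract the terms of order exactly $\LL^{-2n}$ in this integral.

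We decompose the arcs of $Y$ according to the point $p\in D$ to which they specialize, and use local coordinates in which $D$, $\pi^*\mathfrak{M}$ and $\pi^*\omega$ are monomial. The strict transforms of $l_1,l_2$ and of the polar curve $\Pi_l^*$ contribute only on sets of smaller dimension or through higher order corrections, which we expect to vanish in the limit. Near the polar curve this uses the hypothesis that $\Pi_l^*$ avoids the $E_j$ with $q_j=1$.

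\textbf{Points of $E_i^0$.} In coordinates $(x,y)$ with $E_i=\{x=0\}$, we have $\mathrm{ord}\,\mathfrak{M}=m_i\,\mathrm{ord}\,x$ and $\mathrm{ord}\,\omega=\hat k_i\,\mathrm{ord}\,x$. Summing over $a=\mathrm{ord}\,x\geq \lceil n/m_i\rceil$, the contribution is a geometric series
$$
[E_i^0](1-\LL^{-1})\LL^{-1}\sum_{a}\LL^{-a(\hat k_i+1)}=[E_i^0](1-\LL^{-1})\LL^{-1}\sum_a\LL^{-a m_i(q_i+1)},
$$
whose leading term is of order $\LL^{-\lceil n/m_i\rceil m_i(q_i+1)}$. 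Compared to $\LL^{-2n}$, this tends to $0$ when $q_i>1$, since the exponent is then at least $n(q_i+1)>2n$. When $q_i=1$, it gives $\LL^{2n-2m_i\lceil n/m_i\rceil}$ times a constant. Taking the mean value over residues of $n$ modulo $m_i$ gives the factor $\frac{1}{m_i}\sum_{r=0}^{m_i-1}\LL^{-2r}$. Combined with the geometric sum, this simplifies to $\frac{1}{m_i}\frac{[E_i^0]}{\LL+1}$, using $(1-\LL^{-1})\LL^{-1}/(1-\LL^{-2})\cdot(1-\LL^{-2m_i})/(1-\LL^{-2})\cdot\ldots$. This normalization bookkeeping has to be checked carefully.

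\textbf{Double points $E_i\cap E_j$.} In coordinates $x^{a}y^{b}$, the arcs satisfy $m_ia+m_jb\geq n$ with weight $(1-\LL^{-1})^2\LL^{-a m_i(q_i+1)-b m_j(q_j+1)}$. The order of this term equals $-2n$ only when the face $m_ia+m_jb=n$ is weighted along rate $q=1$, i.e. when $q_i=1$ and $b$ is free while $a m_i\approx n$. So the contribution is $\frac{1}{m_i}$ times a sum over $b\geq1$ of $\LL^{-b m_j(q_j-1)}$ (relative factor), times the residue averaging. This yields
$$
\frac{1}{m_i}\frac{\LL^{-(q_j-1)m_j}(1-\LL^{-1})}{(1-\LL^{-(q_j-1)m_j})(1+\LL^{-1})},
$$
and symmetrically for $q_j=1$. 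When both $q_i,q_j>1$, the contribution vanishes. When both equal $1$, a direct check using the linearity of $\mathcal{I}_X$ with integral slopes is needed; I expect the $1+\LL^{-1}$ terms then to reassemble with the $E^0$ terms.

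\textbf{Main obstacle.} The main difficulty is the careful lattice-point count in the double-point case. The constraint $m_ia+m_jb\geq n$ mixes residues modulo $m_i$, and the mean value over residues must produce exactly the stated rational function. I would first treat it by summing over $b$ with $a=\lceil (n-m_jb)/m_i\rceil$, then average over $n$ modulo $\mathrm{lcm}(m_i,m_j)$. I would also confirm on the cusp/curve case (Proposition \ref{prop : courbes}) that the same bookkeeping yields $1/N_i$.
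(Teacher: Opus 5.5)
Your route is essentially the paper's. You pull back along $\pi$ with Jacobian order $\hat k_i=m_i(q_i+1)-1$ (Lemma~\ref{lem: ord=mather}, Proposition~\ref{disc mather}), compute in monomial charts at points of $E_i^0$ and at double points, and discard everything touching a component with $q>1$. Near $\Pi_l^*$ you can state this outright: the polar factor only lowers the integrand, so that part is dominated by an $E_j$-type term and vanishes since $q_j>1$.

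The only departure is that you normalize by balls instead of spheres. This is legitimate and makes the bookkeeping lighter than the paper's Bezout/fractional-part computation. Both steps you left to be checked go through:
\begin{itemize}
\item On $E_i^0$, the normalized term is $(1-\LL^{-1})\LL^{-1}[E_i^0]\,\LL^{-2r}/(1-\LL^{-2m_i})$ with $r=m_i\lceil n/m_i\rceil-n$. Averaging $\LL^{-2r}$ over $n$ modulo $m_i$ gives $\frac{1}{m_i}\frac{1-\LL^{-2m_i}}{1-\LL^{-2}}$, hence $\frac{[E_i^0]}{m_i(\LL+1)}$.
\item At a double point with $q_i=1<q_j$, put $c=(q_j-1)m_j$. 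Summing first over $a\ge\lceil (n-m_jb)/m_i\rceil$ gives $(1-\LL^{-1})^2\LL^{-bc}\LL^{-2r_b(n)}/(1-\LL^{-2m_i})$, where $0\le r_b(n)<m_i$ and $r_b(n)\equiv m_jb-n \pmod{m_i}$. The range $m_jb\ge n$ contributes $O(\LL^{-nc/m_j})$. For each $b$, $r_b(n)$ runs over all residues as $n$ does, so averaging modulo $m_i$ gives exactly $\frac{1}{m_i}\frac{\LL^{-c}(1-\LL^{-1})}{(1-\LL^{-c})(1+\LL^{-1})}$. No lcm or gcd case distinction is needed.
\end{itemize}

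The genuine gap is the case $q_i=q_j=1$ at a point of $E_i\cap E_j$, and your expectation there is wrong. That point would contribute $(1-\LL^{-1})^2\sum_{a,b\ge1,\ m_ia+m_jb\ge n}\LL^{-2(m_ia+m_jb-n)}$ after normalization. For $n$ in a suitable residue class this contains about $n/\mathrm{lcm}(m_i,m_j)$ terms of degree $0$, namely the lattice points on $m_ia+m_jb=n$. So it grows linearly in $n$, has no mean value, and cannot reassemble with the bounded $E^0$ terms. Consistently, the stated formula would contain $\LL^{0}/(1-\LL^{0})$.

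What the proof needs is that this configuration never occurs; this is the Remark right after the theorem in the paper. Linearity of $\mathcal I_X$ with integral slopes does not help, since it only says $\mathcal I_X\equiv 1$ on that edge. One proof with the tools at hand:
\begin{itemize}
\item Blow up $p\in E_i\cap E_j$. By Lemma~\ref{calcul taux}(ii), the new component $E_w$ has $q_w=1$.
\item But $E_w$ is contracted to a point of the blowup of $\mathfrak M$, so all its curvettes share one tangent line.
\item Their images under a generic $l$ are therefore tangent in $\C^2$ and have contact $>1$, which forces $q_w>1$ by Lemma~\ref{lem : inner rate}. This is a contradiction.
\end{itemize}
Alternatively, the linear growth above contradicts the existence of the mean value, which the paper cites from Forey.
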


\vspace{\baselineskip}

\begin{remark}
    Consider $E_i$ an exceptional component of the exceptional locus such that $q_i$ its associated inner rate is equal to one. If $E_i \cap E_j \neq 0$ for $E_j$ an other exceptional component, then $q_j \neq 1$. 
\end{remark}

\vspace{\baselineskip}

\begin{lemma}\label{lem:vois tub}
Let $X$ be a surface with a unique isolated singularity in $O$, and consider a resolution $\pi : Y \longrightarrow X$ such that the exceptional locus is simple normal crossing. Then $\pi^{-1}(\underline{X} \cap S(O,n)) = \mathcal{V}(D)$ where $\mathcal{V}(D)$ is the set of points of valuative distance exactly $\frac{n}{m_i}$ of every component $E_i$ of the exceptional divisor $D=\pi^{-1}(O)=\sum_{i\in I} m_iE_i$, and $S(O,n)$ is the valuative sphere. 
\end{lemma}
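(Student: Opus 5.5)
We must show that an arc lies in $\pi^{-1}(\underline{X}\cap S(O,n))$ exactly when it lies in $\mathcal{V}(D)$, the set of arcs of $Y$ whose valuative distance to each component $E_i$ is $n/m_i$. The plan is to translate the condition ``$v(\gamma)=n$'' for an arc $\gamma$ on $X$ into a condition on its lift $\tilde\gamma=\pi^{-1}(\gamma)$ on $Y$, using the fact that $\pi$ pulls back the maximal ideal to the exceptional divisor.

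First, since $\pi$ is proper and birational, and an isomorphism outside $O$, the valuative criterion of properness lets every arc $\gamma\in\underline{X}$ with generic point outside $O$ lift uniquely to an arc $\tilde\gamma$ on $Y$. Arcs entirely contained in $O$ form a set of dimension $0<2$, so they can be discarded for measure purposes. Hence $\pi$ induces a bijection on the relevant arcs, and it suffices to compare valuations.

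Second, embed $X\subset\mathbb{A}^N$ with $O$ at the origin, so that $v(\gamma)=\min_k \text{ord}_t(x_k\circ\gamma)$. This equals $\text{ord}_t$ of the pullback of the maximal ideal $\mathfrak{M}=(x_1,\dots,x_N)$ along $\gamma$. Because $\pi$ factors through the blowup of the maximal ideal, $\mathfrak{M}\cdot\mathcal{O}_Y=\mathcal{O}_Y(-\sum_i m_iE_i)$ is invertible. Hence
$$v(\gamma)=\text{ord}_t\big(\tilde\gamma^*\mathcal{O}_Y(-D)\big)=\sum_i m_i\,\text{ord}_t(\tilde\gamma^*E_i).$$

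Third, work locally at the point $p=\tilde\gamma(0)$. If $p\in E_i^0$, choose local coordinates $(u,w)$ with $E_i=\{u=0\}$; then $v(\gamma)=m_i\,\text{ord}_t(u\circ\tilde\gamma)$, and $\text{ord}_t(u\circ\tilde\gamma)$ is precisely the valuative distance of $\tilde\gamma$ to $E_i$, while its distance to every other $E_j$ is $0$. If $p\in E_i\cap E_j$, choose coordinates with $E_i=\{u=0\}$ and $E_j=\{w=0\}$; then $v(\gamma)=m_i\,\text{ord}(u)+m_j\,\text{ord}(w)$. Consequently $v(\gamma)=n$ if and only if the lift satisfies the weighted distance condition $\sum_i m_i\,d(\tilde\gamma,E_i)=n$, where $d(\tilde\gamma,E_i)=\text{ord}_t(f_i\circ\tilde\gamma)$ for a local equation $f_i$ of $E_i$.

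The main obstacle is reconciling this with the statement's wording, ``distance exactly $n/m_i$ of every component''. That wording must be read as the weighted condition above: near $E_i^0$ it says the distance to $E_i$ is $n/m_i$, which forces $m_i\mid n$; near a double point it is the relation $m_i\,\text{ord}(u)+m_j\,\text{ord}(w)=n$. I will state $\mathcal{V}(D)$ precisely in this weighted form, so that the lemma's claim is exactly the equality of the two sets. With that definition, the equality follows directly from the second and third steps.
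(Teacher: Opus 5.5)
Your proof is correct and follows the same route as the paper. In local charts around points of $E_i^0$ and of $E_i\cap E_j$, both sets reduce to $\{v(u^{m_i})=n\}$ and $\{v(u^{m_i}w^{m_j})=n\}$, and your weighted reading of $\mathcal{V}(D)$ is exactly the one the paper's proof uses. The paper only asserts the identity $\pi^{-1}(\underline{X}\cap S(O,n))\cap\underline{U}=\{v(u^{m_i})=n\}$; you justify it through $\mathfrak{M}\cdot\mathcal{O}_Y=\mathcal{O}_Y(-D)$. That step needs $\pi$ to factor through the blowup of the maximal ideal, which is a standing hypothesis of that section but is not stated in the lemma itself.
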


\vspace{\baselineskip}

\begin{proof}
Let us set $D=\pi^{-1}(O)=\sum_{i\in I} m_iE_i$ the exceptional divisor, with $m_i$ the multiplicity of the exceptional component $E_i$. Let $E_i^0 = E_i \smallsetminus \cup_{i\neq j} E_j$. Since the divisor is supposed to be normal crossing, we can cover $Y$ by affine open subsets $U$ on which there exist regular functions $x,y$ inducing an étale map $U \longrightarrow \mathbb{A}_k^2$ such that on $U$, each component of the exceptional divisor is defined by a monomial in $x,y$.

In each point of $E_i^0$, we have:
\begin{align*}
    \mathcal{V}(D)\cap \underline{U}&=\{p\in \underline{U} \mid v(x(p)^{m_i})=n\} \text{ and} \\
    \pi^{-1}(\underline{X} \cap S(O,n))\cap \underline{U} &= \{p\in \underline{U} \mid \pi(p) \in S(O,n)\} \\
    & = \{p\in \underline{U} \mid v(x(p)^{m_i})=n\}.
\end{align*}

In each point of $E_i \cap E_j$, we have:
\begin{align*}
    \mathcal{V}(D)\cap \underline{U}&=\{p\in \underline{U} \mid v(x^{m_i}y^{m_j}(p))=n\} \text{ and} \\
    \pi^{-1}(\underline{X} \cap S(O,n))\cap \underline{U}&= \{p\in \underline{U} \mid \pi(p) \in S(O,n)\} \\
    & = \{p\in \underline{U} \mid v(x^{m_i}y^{m_j}(p))=n\}.
\end{align*}

\end{proof}

\vspace{\baselineskip}

\begin{remark}
    Without loss of generality, we can assume $\Pi_l^* \cap E_j = \emptyset$ for components with associated inner rate equal to one. If the strict transform of the polar curve intersects a component with inner rate equal to one, then we can blowup this intersection point, and by Lemma \ref{calcul taux}, $\Pi_l^*$ now intersects a new component with inner rate strictly greater than one.
\end{remark}

\vspace{\baselineskip}

\begin{proof}[Proof of Theorem \ref{thm : formula surface}]
    Since $\pi$ induces an isomorphism $Y \smallsetminus \pi^{-1}(O) \longrightarrow X \smallsetminus \{O\}$, we have the following equality:
    \begin{align*}
        \mu_2(\underline{X} \cap S(O,n)) & = \mu_2(\underline{X}\smallsetminus\{O\} \cap S(O,n)) \\
        & = \int_{\pi^{-1}(\underline{X}\smallsetminus\{O\} \cap S(O,n))} \LL^{-\text{ord}(\pi^*\omega)}
    \end{align*}
    
    using the theorem of change of variables \ref{thm: change var}, with the notation of Lemma \ref{disc mather}. Using Lemma \ref{lem:vois tub}, we then have:
    $$
    \mu_2(\underline{X}\smallsetminus\{O\} \cap S(O,n)) = \int_{\mathcal{V}(D)} \LL^{-\text{ord}(\pi^*\omega)}.
    $$
    Using Lemma \ref{lem: ord=mather}, the function $\text{ord}(\pi^*\omega)$ is equal to the Mather log discrepancy function, and thanks to the Proposition \ref{disc mather}, we can compute this function using the multiplicities and the inner rates associated to the component of the exceptional divisor. Hence, near each divisor, the power of $\LL$ appearing in the integral is very easy to explicit. As in the proof of the previous lemma, in each point of the support of the divisor, we can define a finite Zariski cover as follow.  In a point of $E_i^0$, working locally on $U$ an open set of $Y$ we can assume that $E_i$ is defined by the equation $x=0$, and in a point of $E_i \cap E_j$, on an open set of $Y$ we can assume that $E_i \cap E_j$ is defined by the equation $xy=0$. By additivity of the motivic measure, it is sufficient to work on those local neighborhoods. 

    \vspace{\baselineskip}

    We first work in a neighborhood $U$ intersecting $E_i^0$: for $n$ such that $m_i$ divides $n$, we get:
    \begin{align*}
        \int_{v(x^{m_i})=n} \LL^{-\hat{k}_iv(x)} & = \LL^{-((q_i+1)m_i-1)\frac{n}{m_i}}\LL^{-\frac{n}{m_i}}\LL^{-2}(\LL-1)[E_i^0\cap U]\\
        & = \LL^{-(q_i+1)n}\LL^{-2}(\LL-1)[E_i^0\cap U].
    \end{align*}
    For $q_i >1$, the limit after normalizing by the volume of the sphere $\LL^{-2n}(1-\LL^{-2})$ will be zero. Hence, we only focus on divisors whose associated inner rate is equal to 1. 

    We now consider a neighborhood $U$ intersecting $\Pi_l^*$. By hypothesis, the strict transform of the polar curve intersects a component with inner rate strictly greater than one, hence does not contribute to the local density.

    We finally need to consider a neighborhood $U$ intersecting $E_i \cap E_j$. We fix some notations for the rest of the proof. Up to reordering, we can assume that $E_1$ is an exceptional divisor whose associated inner rate is equal to 1, and multiplicity is $m_1$, and $E_2$ is an exceptional divisor intersecting $E_1$ transversely whose inner rate is denoted $q_2 >1$, and multiplicity $m_2$. We want to compute:
    \begin{align*}
    & \int_{v(x^{m_1}y^{m_2})=n} \LL^{-\hat{k}_xv(x)-\hat{k}_yv(y)} \\
 = & \int_{v(x^{m_1})=n,v(y)=0} \LL^{-\hat{k}_xv(x)} + \int_{v(y^{m_2})=n,v(x)=0} \LL^{-\hat{k}_yv(y)} \\
     + & \int_{v(x^{m_1}y^{m_2})=n, v(x),v(y)>0} \LL^{-\hat{k}_xv(x)-\hat{k}_yv(y)},
\end{align*}

where $\hat{k}_x=2m_1-1, \hat{k}_y=(q_2+1)m_2-1$. For $m_1 \mid n$, 
\begin{align*}
    &\int_{v(x^{m_1})=n,v(y)=0} \LL^{-\hat{k}_xv(x)} \\
    =&\LL^{-(2m_1-1)\frac{n}{m}}\mu_2(\{(x,y) \in U \mid v(x)=\frac{n}{m_1}, v(y)=0\}) \\
    =& \LL^{-(2m_1-1)\frac{n}{m_1}} \LL^{-\frac{n}{m_1}-1}(\LL-1)\LL^{-1}[E_1^0\cap U] \\
    =& \LL^{-2n}\LL^{-2}(\LL-1)[E_1^0\cap U].
\end{align*}

As seen before, since $q_2>1$, the mean value at infinity will be zero after normalization for the second integral. We now focus on the third integral:
\begin{align}
    \int_{v(x^{m_1}y^{m_2})=n, v(x),v(y)>0} \LL^{-\hat{k}_xv(x)-\hat{k}_yv(y)} \notag \\
    = \sum_{m_1k+m_2l=n,k,l\geq 1} \LL^{-(\hat{k}_x+1)k}\LL^{-(\hat{k}_y+1)l}\LL^{-2}(\LL-1)^2 .\tag{1} \label{eq : integral intersection}
\end{align}

Consider for simplicity gcd$(m_1,m_2)$=1 at first. The solutions of the equation $m_1k+m_2l=n$ are of the form $nk_0-m_2\alpha=k, nl_0+m_1\alpha = l$ for $k_0,l_0$ fixed integers. We set $a=2m_1, b=(q_2+1)m_2, t_1 = \lceil\frac{1-nl_0}{m_1}\rceil$ (the ceiling function) and $t_2=\lfloor \frac{1-nk_0}{-m_2}\rfloor$ (the floor function). We write (\ref{eq : integral intersection}) as: 
\begin{align*}
    &\LL^{-n(ak_0+bl_0)}\LL^{-2}(\LL-1)^2\sum_{\alpha = t_1}^{t_2} \LL^{-m_1\alpha(b-2m_2)} \\
    =&\LL^{-n(ak_0+bl_0)}\LL^{-2}(\LL-1)^2\frac{\LL^{-m_1(b-2m_2)t_1}-\LL^{-m_1(b-2m_2)(t_2+1)}}{1-\LL^{-m_1(b-2m_2)}}.
\end{align*}

Again, we can see that the terms depending on $t_2$ will not contribute after normalization and taking the limit. We will ignore it in the rest of the proof. Now set $n=m_1s+e, 0\leq e \leq m_1-1$. Our integral (\ref{eq : integral intersection}) becomes:
$$
\LL^{-2n -(l_0e+\lceil\frac{1-l_0e}{m_1}\rceil m_1)(b-2m_1)} \frac{\LL^{-2}(\LL-1)^2}{1-\LL^{-m_1(b-2m_2)}}.
$$
By taking the mean value at infinity, we will have to compute:
$$
\sum_{e=0}^{m_1-1} \LL^{-(\frac{l_0e}{m_1}+\lceil\frac{1-l_0e}{m_1}\rceil)m_1(b-2m_2)}.
$$

For $1 \leq e \leq m_1-1$, we have that $\lceil\frac{1-l_0e}{m_1}\rceil = - \lfloor \frac{l_0e}{m_1} \rfloor$, hence $\frac{l_0e}{m_1}+\lceil\frac{1-l_0e}{m_1}\rceil = \Big\{\frac{l_0e}{m_1}\Big\}$ the fractional part. Since $l_0$ and $m_1$ are coprime, for $1 \leq e \leq m_1-1$, $\Big\{\frac{l_0e}{m_1}\Big\}m_1$ takes all values between 1 and $m_1-1$. We can write the previous sum as:

\begin{align*}
    \LL^{-m_1(b-2m_2)}+ \sum_{e=1}^{m_1-1}\LL^{-\Big\{\frac{l_0e}{m_1}\Big\}m_1(b-2m_2)} 
    & = \LL^{-m_1(b-2m_2)} + \sum_{\beta=1}^{m_1-1}\LL^{-\beta(b-2m_2)} \\
    &= \LL^{-(b-2m_2)}\frac{1-\LL^{-m_1(b-2m_2)}}{1-\LL^{-(b-2m_2)}}.
\end{align*}

Finally, after normalizing by $\LL^{-2n}(1-\LL^{-2})$, taking the mean value at infinity, we have the local density:
$$
\frac{1}{m_1} \Big[\frac{\LL^{-1}}{1+\LL^{-1}}[E_1^0]+ \frac{\LL^{-(b-2m_2)}(1-\LL^{-1})}{(1-\LL^{-(b-2m_2)})(1+\LL^{-1})} \Big]
$$
with $b-2m_2=(q_2-1)m_2$. 

\vspace{\baselineskip}

Now consider the case where gcd$(m_1,m_2)=d \neq 1$. Then the equation $m_1k+m_2l=n$ has solutions if and only if $d$ divides $n$. We set $m_1'=\frac{m_1}{d}, m_2'=\frac{m_2}{d}, n'=\frac{n}{d}$. We now have to compute:
$$
\sum_{m_1'k+m_2'l=n',k,l\geq 1} \LL^{-(\hat{k}_x+1)k}\LL^{-(\hat{k}_y+1)l}\LL^{-2}(\LL-1)^2.
$$
Since gcd$(m_1',m_2')=1$, we can do the same computation as above. Hence, we finally get a term:
$$
\frac{1}{m'_1} \Big[\frac{\LL^{-1}}{1+\LL^{-1}}[E_1^0]+ \frac{\LL^{-(b-2m_2)}(1-\LL^{-1})}{(1-\LL^{-(b-2m_2)})(1+\LL^{-1})} \Big]
$$
and with the extra condition that $d$ divides $n$, the local density will be given by:
$$
\frac{1}{d}\frac{1}{m'_1} \Big[\frac{\LL^{-1}}{1+\LL^{-1}}[E_1^0]+ \frac{\LL^{-(b-2m_2)}(1-\LL^{-1})}{(1-\LL^{-(b-2m_2)})(1+\LL^{-1})} \Big].
$$

\end{proof}

\begin{example}
    We consider the singularity $E_8$. Let $f(x,y,z)=x^2+y^3+z^5$ and $X:=\{(x,y,z) \in \C \mid f(x,y,z)=0\}$. The Newton polyhedra is defined as $\mathcal{N}(f)=\text{Conv}(\text{Supp}(f)+\N^3)$, and $\mathcal{N}_0(f)$ is the set of compact faces of $\mathcal{N}(f)$. Let $\underline{X}_{\Delta_1}$ be the subdefinable set of $\underline{X}$ with the additional condition $2v(x)=3v(y)$. Since 
    $$
    \underline{X}\cap B(0,n) = \bigsqcup_{\Delta \in \mathcal{N}_0(f)} \underline{X}_\Delta\cap B(0,n)
    $$
    by additivity of the local density, it suffices to compute the density of $\underline{X}_\Delta$ and sum all the results. One of the argument for the computations is Hensel's lemma in order to write our sets as balls parametrized by the residue field. This is possible since $f$ is non-degenerate on every face of the Newton polyhedra. The condition $2v(x)=3v(y)$ forces $v(y)$ to be even, hence we will get two radius of ball to consider: even and odd. The local density $\Theta_2^\text{mot}(X_{\Delta_1},0)$ is equal to $\frac{1}{2}$, and one can show that the other faces will not contribute to the local density. Hence,
    $$
    \Theta_2^\text{mot}(X,0) = \frac{1}{2}.
    $$

    Consider $\Gamma_\pi$ the graph of resolution of $E_8$, where the vertices are decorated with the multiplicities of the corresponding exceptional components (in parenthesis) and the associated inner rate, as computed in example 3.7 of \cite{silva_inner_2022}. \\
    
    \begin{center}
        \tikzset{every picture/.style={line width=0.75pt}}
        \begin{tikzpicture}[x=0.75pt,y=0.75pt,yscale=-1,xscale=1]
        \draw [line width=1.5]    (138,123) -- (480,123) ;
        \draw  [fill={rgb, 255:red, 0; green, 0; blue, 0 }  ,fill opacity=1 ] (144,123) .. controls (144,121.34) and (142.66,120) .. (141,120) .. controls (139.34,120) and (138,121.34) .. (138,123) .. controls (138,124.66) and (139.34,126) .. (141,126) .. controls (142.66,126) and (144,124.66) .. (144,123) -- cycle ;
        \draw  [fill={rgb, 255:red, 0; green, 0; blue, 0 }  ,fill opacity=1 ] (200,123) .. controls (200,121.34) and (198.66,120) .. (197,120) .. controls (195.34,120) and (194,121.34) .. (194,123) .. controls (194,124.66) and (195.34,126) .. (197,126) .. controls (198.66,126) and (200,124.66) .. (200,123) -- cycle ;
        \draw  [fill={rgb, 255:red, 0; green, 0; blue, 0 }  ,fill opacity=1 ] (426,123) .. controls (426,121.34) and (424.66,120) .. (423,120) .. controls (421.34,120) and (420,121.34) .. (420,123) .. controls (420,124.66) and (421.34,126) .. (423,126) .. controls (424.66,126) and (426,124.66) .. (426,123) -- cycle ;
        \draw  [fill={rgb, 255:red, 0; green, 0; blue, 0 }  ,fill opacity=1 ] (484,123) .. controls (484,121.34) and (482.66,120) .. (481,120) .. controls (479.34,120) and (478,121.34) .. (478,123) .. controls (478,124.66) and (479.34,126) .. (481,126) .. controls (482.66,126) and (484,124.66) .. (484,123) -- cycle ;
        \draw  [fill={rgb, 255:red, 0; green, 0; blue, 0 }  ,fill opacity=1 ] (257,123) .. controls (257,121.34) and (255.66,120) .. (254,120) .. controls (252.34,120) and (251,121.34) .. (251,123) .. controls (251,124.66) and (252.34,126) .. (254,126) .. controls (255.66,126) and (257,124.66) .. (257,123) -- cycle ;
        \draw  [fill={rgb, 255:red, 0; green, 0; blue, 0 }  ,fill opacity=1 ] (315,123) .. controls (315,121.34) and (313.66,120) .. (312,120) .. controls (310.34,120) and (309,121.34) .. (309,123) .. controls (309,124.66) and (310.34,126) .. (312,126) .. controls (313.66,126) and (315,124.66) .. (315,123) -- cycle ;
        \draw  [fill={rgb, 255:red, 0; green, 0; blue, 0 }  ,fill opacity=1 ] (370,123) .. controls (370,121.34) and (368.66,120) .. (367,120) .. controls (365.34,120) and (364,121.34) .. (364,123) .. controls (364,124.66) and (365.34,126) .. (367,126) .. controls (368.66,126) and (370,124.66) .. (370,123) -- cycle ;
        \draw  [fill={rgb, 255:red, 0; green, 0; blue, 0 }  ,fill opacity=1 ] (257,69) .. controls (257,67.34) and (255.66,66) .. (254,66) .. controls (252.34,66) and (251,67.34) .. (251,69) .. controls (251,70.66) and (252.34,72) .. (254,72) .. controls (255.66,72) and (257,70.66) .. (257,69) -- cycle ;
        \draw [line width=1.5]    (254,72) -- (254,123) ;
        \draw (130,133) node [anchor=north west][inner sep=0.75pt]   [align=left] {(2)};
        \draw (187,133) node [anchor=north west][inner sep=0.75pt]   [align=left] {(4)};
        \draw (243,133) node [anchor=north west][inner sep=0.75pt]   [align=left] {(3)};
        \draw (302,134) node [anchor=north west][inner sep=0.75pt]   [align=left] {(5)};
        \draw (358,134) node [anchor=north west][inner sep=0.75pt]   [align=left] {(4)};
        \draw (412,134) node [anchor=north west][inner sep=0.75pt]   [align=left] {(3)};
        \draw (471,134) node [anchor=north west][inner sep=0.75pt]   [align=left] {(2)};
        \draw (226,51) node [anchor=north west][inner sep=0.75pt]   [align=left] {(3)};
        \draw (262,52.4) node [anchor=north west][inner sep=0.75pt]    {$\mathbf{2}$};
        \draw (477,100.4) node [anchor=north west][inner sep=0.75pt]    {$\mathbf{1}$};
        \draw (417,93.4) node [anchor=north west][inner sep=0.75pt]    {$\mathbf{{\textstyle \frac{4}{3}}}$};
        \draw (362,93.4) node [anchor=north west][inner sep=0.75pt]    {$\mathbf{{\textstyle \frac{3}{2}}}$};
        \draw (307,93.4) node [anchor=north west][inner sep=0.75pt]    {$\mathbf{{\textstyle \frac{8}{5}}}$};
        \draw (257,95.4) node [anchor=north west][inner sep=0.75pt]    {$\mathbf{{\textstyle \frac{5}{3}}}$};
        \draw (192,93.4) node [anchor=north west][inner sep=0.75pt]    {$\mathbf{{\textstyle \frac{7}{4}}}$};
        \draw (136,99.4) node [anchor=north west][inner sep=0.75pt]    {$\mathbf{2}$};
        \end{tikzpicture}
    \end{center}

    Using the formula of Theorem \ref{thm : formula surface}, we find :
    $$
    \Theta_2^\text{mot}(X,0)=\frac{1}{2} \Big[\frac{\LL^{-1}}{1+\LL^{-1}}\LL+ \frac{\LL^{-1}(1-\LL^{-1})}{(1-\LL^{-1})(1+\LL^{-1})} \Big] = \frac{1}{2}.
    $$
\end{example}

\vspace{\baselineskip}

\begin{example}
    As studied in example 5.8 of \cite{silva_inner_2022}, consider the hypersurface singularity $(X,0) \subset (\C^3,0)$ defined by $(zx^2+y^3)(x^3+zy^2)+z^7=0$. After computing a good resolution of $(X,0)$, we get the following dual graph, where the vertices are decorated with the multiplicities of the corresponding exceptional components (in parenthesis) and the associated inner rate.

    \begin{center}
        \tikzset{every picture/.style={line width=0.75pt}}        
        \begin{tikzpicture}[x=0.75pt,y=0.75pt,yscale=-1,xscale=1]
        \draw  [fill={rgb, 255:red, 0; green, 0; blue, 0 }  ,fill opacity=1 ] (163,141) .. controls (163,139.34) and (161.66,138) .. (160,138) .. controls (158.34,138) and (157,139.34) .. (157,141) .. controls (157,142.66) and (158.34,144) .. (160,144) .. controls (161.66,144) and (163,142.66) .. (163,141) -- cycle ;
        \draw  [fill={rgb, 255:red, 0; green, 0; blue, 0 }  ,fill opacity=1 ] (303,141) .. controls (303,139.34) and (301.66,138) .. (300,138) .. controls (298.34,138) and (297,139.34) .. (297,141) .. controls (297,142.66) and (298.34,144) .. (300,144) .. controls (301.66,144) and (303,142.66) .. (303,141) -- cycle ;
        \draw  [fill={rgb, 255:red, 0; green, 0; blue, 0 }  ,fill opacity=1 ] (234,141) .. controls (234,139.34) and (232.66,138) .. (231,138) .. controls (229.34,138) and (228,139.34) .. (228,141) .. controls (228,142.66) and (229.34,144) .. (231,144) .. controls (232.66,144) and (234,142.66) .. (234,141) -- cycle ;
        \draw  [fill={rgb, 255:red, 0; green, 0; blue, 0 }  ,fill opacity=1 ] (234,121) .. controls (234,119.34) and (232.66,118) .. (231,118) .. controls (229.34,118) and (228,119.34) .. (228,121) .. controls (228,122.66) and (229.34,124) .. (231,124) .. controls (232.66,124) and (234,122.66) .. (234,121) -- cycle ;
        \draw  [fill={rgb, 255:red, 0; green, 0; blue, 0 }  ,fill opacity=1 ] (234,100) .. controls (234,98.34) and (232.66,97) .. (231,97) .. controls (229.34,97) and (228,98.34) .. (228,100) .. controls (228,101.66) and (229.34,103) .. (231,103) .. controls (232.66,103) and (234,101.66) .. (234,100) -- cycle ;
        \draw  [fill={rgb, 255:red, 0; green, 0; blue, 0 }  ,fill opacity=1 ] (234,160) .. controls (234,158.34) and (232.66,157) .. (231,157) .. controls (229.34,157) and (228,158.34) .. (228,160) .. controls (228,161.66) and (229.34,163) .. (231,163) .. controls (232.66,163) and (234,161.66) .. (234,160) -- cycle ;
        \draw  [fill={rgb, 255:red, 0; green, 0; blue, 0 }  ,fill opacity=1 ] (234,180) .. controls (234,178.34) and (232.66,177) .. (231,177) .. controls (229.34,177) and (228,178.34) .. (228,180) .. controls (228,181.66) and (229.34,183) .. (231,183) .. controls (232.66,183) and (234,181.66) .. (234,180) -- cycle ;
        \draw  [fill={rgb, 255:red, 0; green, 0; blue, 0 }  ,fill opacity=1 ] (203,221) .. controls (203,219.34) and (201.66,218) .. (200,218) .. controls (198.34,218) and (197,219.34) .. (197,221) .. controls (197,222.66) and (198.34,224) .. (200,224) .. controls (201.66,224) and (203,222.66) .. (203,221) -- cycle ;
        \draw  [fill={rgb, 255:red, 0; green, 0; blue, 0 }  ,fill opacity=1 ] (234,260) .. controls (234,258.34) and (232.66,257) .. (231,257) .. controls (229.34,257) and (228,258.34) .. (228,260) .. controls (228,261.66) and (229.34,263) .. (231,263) .. controls (232.66,263) and (234,261.66) .. (234,260) -- cycle ;
        \draw  [fill={rgb, 255:red, 0; green, 0; blue, 0 }  ,fill opacity=1 ] (263,221) .. controls (263,219.34) and (261.66,218) .. (260,218) .. controls (258.34,218) and (257,219.34) .. (257,221) .. controls (257,222.66) and (258.34,224) .. (260,224) .. controls (261.66,224) and (263,222.66) .. (263,221) -- cycle ;
        \draw  [fill={rgb, 255:red, 0; green, 0; blue, 0 }  ,fill opacity=1 ] (153,260) .. controls (153,258.34) and (151.66,257) .. (150,257) .. controls (148.34,257) and (147,258.34) .. (147,260) .. controls (147,261.66) and (148.34,263) .. (150,263) .. controls (151.66,263) and (153,261.66) .. (153,260) -- cycle ;
        \draw  [fill={rgb, 255:red, 0; green, 0; blue, 0 }  ,fill opacity=1 ] (314,260) .. controls (314,258.34) and (312.66,257) .. (311,257) .. controls (309.34,257) and (308,258.34) .. (308,260) .. controls (308,261.66) and (309.34,263) .. (311,263) .. controls (312.66,263) and (314,261.66) .. (314,260) -- cycle ;
        \draw    (160,141) -- (200,222) ; 
        \draw    (301,141) -- (261,221) ;
        \draw    (149,260) -- (199,221) ;
        \draw    (259,221) -- (311,259) ;
        \draw    (231,260) -- (200,221) ;
        \draw    (261,221) -- (231,261) ;
        \draw    (160,141) .. controls (200,111) and (262,116) .. (300,141) ;
        \draw    (160,138) .. controls (192,87) and (267,87) .. (300,138) ;
        \draw    (160,141) -- (297,141) ;
        \draw    (161,141) .. controls (196,166) and (257,168) .. (300,141) ;
        \draw    (161,142) .. controls (206,194) and (257,192) .. (300,142) ;
        \draw (235,76.4) node [anchor=north west][inner sep=0.75pt]    {$\mathbf{{\textstyle \frac{3}{2}}}$};
        \draw (235,170.4) node [anchor=north west][inner sep=0.75pt]    {$\mathbf{{\textstyle \frac{3}{2}}}$};
        \draw (134,245.4) node [anchor=north west][inner sep=0.75pt]    {$\mathbf{{\textstyle \frac{7}{5}}}$};
        \draw (315,247.4) node [anchor=north west][inner sep=0.75pt]    {$\mathbf{{\textstyle \frac{7}{5}}}$};
        \draw (238,247.4) node [anchor=north west][inner sep=0.75pt]    {$\mathbf{{\textstyle \frac{5}{4}}}$};
        \draw (149,122.4) node [anchor=north west][inner sep=0.75pt]    {$\mathbf{1}$};
        \draw (300,120.4) node [anchor=north west][inner sep=0.75pt]    {$\mathbf{1}$};
        \draw (246,205.4) node [anchor=north west][inner sep=0.75pt]    {$\mathbf{{\textstyle \frac{6}{5}}}$};
        \draw (204,205.4) node [anchor=north west][inner sep=0.75pt]    {$\mathbf{{\textstyle \frac{6}{5}}}$};
        \draw (141,143) node [anchor=north west][inner sep=0.75pt]   [align=left] {(1)};
        \draw (299,144) node [anchor=north west][inner sep=0.75pt]   [align=left] {(1)};
        \draw (208,82) node [anchor=north west][inner sep=0.75pt]   [align=left] {(2)};
        \draw (208,179) node [anchor=north west][inner sep=0.75pt]   [align=left] {(2)};
        \draw (165,209) node [anchor=north west][inner sep=0.75pt]   [align=left] {(10)};
        \draw (264,207) node [anchor=north west][inner sep=0.75pt]   [align=left] {(10)};
        \draw (156,251) node [anchor=north west][inner sep=0.75pt]   [align=left] {(5)};
        \draw (286,254) node [anchor=north west][inner sep=0.75pt]   [align=left] {(5)};
        \draw (207,255) node [anchor=north west][inner sep=0.75pt]   [align=left] {(4)};
        \end{tikzpicture}
    \end{center}
    The strict transform of the polar curve has 3 branches at each component with inner rate equal to one, so further blowups are needed. 
    Using the formula of Theorem \ref{thm : formula surface}, we get that the motivic local density of this surface is given by:
    $$
    2\Big(1+\frac{7}{\LL+1} + \frac{1}{(\LL+1)^2}\Big).
    $$
\end{example}

\vspace{\baselineskip}

We will now give the proof of Proposition \ref{prop : courbes}, using the same technique as before. Let us state the proposition:

\vspace{\baselineskip}

\begin{proposition}\label{prop : curve section 6}
    Let $f \in \C[x,y]$ be a power series without square factor. Let $f=f_1\dots f_r$ the decomposition in $\mathbb{C}[\![x,y]\!]$ of $f$ into irreducible factors. Denote by $N_i$ the multiplicity of $f_i$ and $C$ the curve defined by $f$. Then the motivic density of $C$ at the origin is given by:
    $$
    \Theta^\text{mot}_1(C,0)=\sum_{i=1}^r \frac{1}{N_i}.
    $$
\end{proposition}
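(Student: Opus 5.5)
The plan is to mimic the proof of Theorem \ref{thm : formula surface}, replacing the resolution of the surface by the normalization of each branch of $C$. First I reduce to a single branch. Write $C=\bigcup_{i=1}^r C_i$ with $C_i$ the germ defined by $f_i$. Two distinct branches meet only at the origin, so the sets $\underline{C_i}\cap S(0,n)$, $i=1,\dots,r$, are pairwise disjoint for every $n$. By additivity of $\mu_1$ this gives $\theta_1(C,0)(n)=\sum_i \theta_1(C_i,0)(n)$. I use the proposition allowing normalization by spheres instead of balls. I also use Proposition \ref{prop : indep embed}, which ensures the result does not depend on the embedding. Mean values at infinity are additive once a common modulus $e$ (for instance $\mathrm{lcm}(N_i)$) is chosen. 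So it suffices to show $\Theta_1^{\text{mot}}(C_i,0)=1/N_i$ for an irreducible germ.

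For an irreducible $C_i$ of multiplicity $N:=N_i$, take the normalization $\nu:(\tilde C,0)\to(C_i,0)$. Here $\tilde C$ is smooth and $\nu$ is a Puiseux parametrization $s\mapsto(x(s),y(s))$ with $\min(v(x(s)),v(y(s)))=N$ after a generic linear change of coordinates. The map $\nu$ is proper and birational, so Theorem \ref{thm: change var} applies. This is the curve analogue of the resolution $\pi$, with one exceptional point of multiplicity $m=N$. As in Lemma \ref{lem:vois tub}, an arc $s(t)$ with $v(s)=k$ is sent into $S(0,Nk)$. Hence $\nu^{-1}(\underline{C_i}\cap S(0,n))$ is empty unless $N\mid n$, and equals $\{v(s)=n/N\}$ otherwise.

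Next I need the Jacobian factor, i.e. the analogue of Lemma \ref{lem: ord=mather} for curves. Take a generic linear projection $l:C_i\to\C$, which is transverse to the tangent line of $C_i$. By the argument of the lemma expressing the canonical volume form through a generic projection, the canonical volume form is $|l^*dx|$. In a generic coordinate, $l\circ\nu(s)=us^N$ with $u$ a unit, so $\nu^*(l^*dx)$ has order $N-1$ in $s$. This is the step I expect to be the main obstacle. One has to verify carefully that the normalization in Definition \ref{def : volume form embed} really gives $\alpha_I=0$ for this projection, i.e. that no other coordinate projection gives a form of smaller order along the arcs. This holds because every coordinate of the parametrization has order at least $N$, so every $\nu^*dx_j$ has order at least $N-1$. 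I would also check that the Mather discrepancy of a curve equals $N-1$, which matches "inner rate $1$" in Proposition \ref{disc mather}.

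Finally I compute. For $n=Nk$,
$$\mu_1(\underline{C_i}\cap S(0,n))=\int_{v(s)=k}\LL^{-(N-1)v(s)}=\LL^{-(N-1)k}\LL^{-k}(1-\LL^{-1})=\LL^{-n}(1-\LL^{-1}),$$
and for $N\nmid n$ the measure is $0$. Dividing by $\mu_1(S_1(0,n))=\LL^{-n}(1-\LL^{-1})$, the sequence $\theta_1(C_i,0)(n)$ equals $1$ for $n\equiv 0 \bmod N$ and $0$ otherwise. Its mean value at infinity with modulus $e=N$ is therefore $1/N$. Summing over the branches gives $\sum_i 1/N_i$.
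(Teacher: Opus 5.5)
Your proposal is correct and is essentially the paper's proof: normalize $C$ so that each branch gives one point $p_i$ of multiplicity $N_i$ over the origin, compute $\text{ord}_{p_i}(\pi^*l^*dx)=N_i-1$ for a generic projection $l$, apply the change of variables formula, observe that the normalized volume is $1$ when $N_i\mid n$ and $0$ otherwise, and sum the resulting mean values $1/N_i$ over the branches. The differences are minor: you split into branches before pulling back, and you explicitly check that $|l^*dx|$ is the canonical volume form, which the paper leaves implicit; note also that to apply Theorem \ref{thm: change var} literally you should use the global algebraic normalization of $C$ with \'etale coordinates at the $p_i$, as the paper does, rather than the analytic germ normalization.
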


\vspace{\baselineskip}

\begin{proof}
    As in the case of surface, we need to compute the order of the jacobian of the resolution morphism in order to use the change of variable formula. We will use the same tools as in \cite{cherik_inner_2022} to compute the canonical divisor associated to the following 1-form: consider $l : C \longrightarrow \C$ a generic projection and set $\omega:=l^*dx \in \Omega_C$. Let $\pi : Y \longrightarrow C$ a good resolution of the singularity of $C$, we denote by $D=\pi^{-1}(0)$ the exceptional locus which is a finite set of points denoted $p_i$ of multiplicity $N'_i$ for $i=1,\dots,r$. We first prove that $N_i'=N_i$. By the Newton-Puiseux theorem, each branch $C_i=\{f_i=0\}$ admits a parametrization $\pi_i:(\C,0)\longrightarrow C, \pi_i(t)=(x_i(t),\phi_i(t))$ with $x_i(t)=t^{N_i}, \phi_i(t)=\sum_{l \geq N_i}a_lt^l$. Consider the global normalization map $\pi : Y \longrightarrow C$ and let $\pi^{-1}(0)=\{p_1,\dots,p_r\}$. Let $t$ be a local coordinate on $Y$ such that $p_i=\{t_i=0\}$, we have by the property of normalization that $\pi(t)=(t^{N_i},\phi_i(t))$, hence $\text{deg}(p_i)=N_i'=N_i$. See also \cite{wall_singular_2004} for results on resolution of plane curve singularity, especially section 3.
    
    Consider $p$ a smooth point of $Y \smallsetminus (\pi^{-1}(0) \cup l^*)$ for $l^*$ the strict transfrom of $l$. Let $u$ be a system of local coordinates of $Y$ centered at $p$ such that $p_i$ has local equation $u=0$ and such that $(l \circ \pi)(u)=u^{N_i}$. Let $\chi=(l\circ \pi)^*dx$, in a neighborhood of $p$, the 1-form $\chi$ is given by:
    $$
    \chi=-N_iu^{N_i-1}du.
    $$
    Since $l \circ \pi$ is a local isomorphism on the complement of $D$ in $Y$, the 1-form $\chi$ does not vanish on this set. Therefore, the canonical divisor $K_\chi$ on the smooth complex curve $Y$ is represented by:
    $$
    K_\chi = \sum_{i=1}^r(N_i-1)p_i.
    $$
    Hence, $\text{ord}_{p_i}(\pi^*\omega)=\text{ord}_{p_i}(\chi)=N_i-1$.\\

    Since $\pi$ induces an isomorphism $Y \smallsetminus \pi^{-1}(0) \longrightarrow C \smallsetminus \{0\}$, we have the following equality:
    \begin{align*}
        \mu_1(\underline{C} \cap S(0,n)) & = \mu_1(\underline{C}\smallsetminus\{0\} \cap S(0,n)) \\
        & = \int_{\pi^{-1}(\underline{C}\smallsetminus\{0\} \cap S(0,n))} \LL^{-\text{ord}(\pi^*\omega)}\\
    \end{align*}
    Hence, we just need to compute:
    $$
        \int_{\{x \mid v(x^{N_i})=n\}} \LL^{-(N_i-1)v(x)}.
    $$
    If $N_i$ does not divide $n$, this integral is equal to zero. If $N_i$ divides $n$, then:
    \begin{align*}
        \int_{\{x \mid v(x^{N_i})=n\}} \LL^{-(N_i-1)v(x)} &= \LL^{-(N_i-1)\frac{n}{N_i}}\mu(\{x \mid v(x^{N_i})=n\})\\
        & = \LL^{-(N_i-1)\frac{n}{N_i}} \LL^{-\frac{n}{N_i}-1}(\LL-1) \\
        & = \LL^{-n}(\LL-1).
    \end{align*}
    We renormalize by the volume of the one-dimensional sphere which is $ \LL^{-n}(\LL-1)$, we do the mean value at infinity and sum all of the terms, we finally get that:
    $$
    \Theta_1^\text{mot}(C,0) = \sum_{i=1}^r \frac{1}{N_i}.
    $$
\end{proof}

\vspace{\baselineskip}

Competing interests: The author declare none.

\bibliographystyle{plain}
\bibliography{Local_density_surface}

\end{document}